\theoremstyle{plain}
\newtheorem{thm}{Theorem}[section]
\newtheorem{prop}[thm]{Proposition}
\newtheorem{lem}[thm]{Lemma}
\newtheorem{cor}[thm]{Corollary}
\theoremstyle{definition}
\theoremstyle{remark}
\newtheorem{rmk}[thm]{Remark}
\newcommand\beq{\begin{equation}}
\newcommand\eeq{\end{equation}}
\newcommand\beqs{\begin{equation*}}
\newcommand\eeqs{\end{equation*}}
\newcommand\St{{\Sigma_\tau}}
\newcommand\h{\hspace}
\newcommand\mcC{\mathcal C}
\newcommand*\R{\mathbb{R}}
\newcommand*\D{\mathrm{d}}
\newcommand*\M{\mathrm{m}}
\newcommand*\p{\partial}
\DeclarePairedDelimiterX\set[2]\lbrace\rbrace{#1\;\delimsize\vert\;#2}
\newcommand*\eqdef{\overset{\mbox{\tiny{def}}}{=}}
\DeclarePairedDelimiter{\lrangle}{\langle}{\rangle}
\newcommand*\mr{\mathring}
\newcommand{\fk}{\mathfrak}
\DeclareMathOperator{\dvol}{dvol}
\DeclareMathOperator{\foc}{foc}
\DeclareMathOperator{\diag}{diag}
\newcommand*\Lebw{\mathcal{L}}
\newcommand*\Sobw{\mathcal{W}}
\numberwithin{equation}{section}
\begin{document}
\title{Global stability of some totally geodesic wave maps}
\author{Leonardo Enrique Abbrescia}
\address{Vanderbilt University}
\email{leonardo.abbrescia@vanderbilt.edu}
\author{Yuan Chen}
\address{Michigan State University}
\email{chenyu60@msu.edu}

\subjclass[2010]{35L72,35B35, 35B45, 35B35}

\begin{abstract}
We prove that wave maps that factor as $\mathbb{R}^{1+d} \overset{\varphi_{\text{S}}}{\to} \mathbb{R} \overset{\varphi_{\text{I}}}{\to} M$, subject to a sign condition, are globally nonlinear stable under small compactly supported perturbations when $M$ is a spaceform. The main innovation is our assumption on $\varphi_{\text{S}}$, namely that it be a semi-Riemannian submersion. This implies that the background solution has infinite total energy, making this, to the best of our knowledge, the first stability result for factored wave maps with infinite energy backgrounds. We prove that the equations of motion for the perturbation decouple into a nonlinear wave--Klein-Gordon system. We prove global existence for this system and improve on the known regularity assumptions for equations of this type.
\end{abstract}
\keywords{Totally geodesic maps, Wave maps, Wave--Klein-Gordon equations, Strongly coupled nonlinearities, Vectorfield method, Hyperboloidal foliations}

\maketitle

\section{Introduction} \label{intro}
This paper is concerned with the global stability of certain infinite energy totally geodesic wave maps from Minkowski space $\mathbb{R}^{1+d}$ with $d \geq 3$ into a spaceform $(M^n,g)$. We consider as our background solutions totally geodesic maps which factor as 
\begin{equation} \label{eq:factor}
\begin{tikzcd}
\R^{1+d} \arrow{r}{\varphi_{\text{S}}} & \R \arrow{r}{\varphi_{\text{I}}} & M;
\end{tikzcd}
\end{equation}
where, denoting by $e$ the standard Euclidean metric on $\mathbb{R}$, the mapping $\varphi_\text{S}$ is a semi-Riemannian submersion\footnote{A semi-Riemannian submersion $\varphi : N \to M$ is necessarily an isometry on the horizontal space normal to fibres. See \cite[P. 212]{Oneill} for a precise definition.} to either $(\mathbb{R},e)$ or $(\mathbb{R},-e)$, and $\varphi_{\text{I}}$ is a Riemannian immersion from $(\mathbb{R},e)$ to $(M,g)$. In particular, this factorization implies the background solution is a totally geodesic wave map  that has infinite total energy. The semi-Riemannian submersion $\varphi_{\text{S}}$ can be classified as \emph{spacelike} or \emph{timelike}\footnote{Note that by definition, a semi-Riemannian submersion cannot be null. We always equip the real line $\R$, as the domain of $\varphi_{\text{I}}$, with $+e$.} depending on whether its codomain $\mathbb{R}$ is considered as being equipped with $e$ or $-e$. Our main theorem states:

\begin{thm}[Rough version] \label{roughthm}
    Fix $d \geq 3$. A totally geodesic map satisfying the factorization \eqref{eq:factor} is globally nonlinearly stable as a solution to the initial value problem for the wave maps equation under compactly supported smooth perturbations, provided that either
\begin{description}
    \item[TL] $\varphi_{\text{S}}$ is timelike and $(M,g)$ is a negatively-curved spaceform;
    \item[SL] $\varphi_{\text{S}}$ is spacelike and $(M,g)$ is a positively-curved spaceform.     \end{description}
\end{thm}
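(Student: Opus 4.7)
The strategy is to reduce the perturbed wave map equation to a semilinear coupled wave--Klein-Gordon system via a background-adapted trivialization, and then establish global existence by a vectorfield bootstrap on a hyperboloidal foliation.

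First I would exploit the geometry. Since $\varphi_{\text{I}}$ is a Riemannian immersion of $(\R,e)$ into the spaceform $(M,g)$, its image is a complete geodesic $\gamma \subset M$; introduce Fermi normal coordinates $(y^0, y^1, \dots, y^{n-1})$ along $\gamma$, with $y^0$ the arclength along $\gamma$ and $y'=(y^i)_{i\geq 1}$ transverse. In these coordinates $\Gamma^A_{BC}\equiv 0$ on $\gamma$, and the transverse derivatives of the Christoffels at $\gamma$ are encoded by the Riemann tensor. Writing $\varphi = \varphi_{\text{bg}} + \psi$ with $\psi = (\psi^0, \psi')$ and expanding
\beqs
\Box \varphi^A + \Gamma^A_{BC}(\varphi)\,\eta^{\mu\nu}\p_\mu\varphi^B\p_\nu\varphi^C = 0
\eeqs
about $\varphi_{\text{bg}}$, one finds a system in which $\psi^0$ satisfies a wave equation and each $\psi^i$, $i\geq 1$, satisfies $\Box\psi^i + c\,\psi^i = \mathcal{N}^i(\psi, \p\psi)$ with $c = -K\,\eta^{\mu\nu}\p_\mu\varphi_{\text{S}}\p_\nu\varphi_{\text{S}}$, where $K$ is the sectional curvature of $M$. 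The semi-Riemannian submersion hypothesis fixes $\eta^{\mu\nu}\p_\mu\varphi_{\text{S}}\p_\nu\varphi_{\text{S}} = -1$ in case \textbf{TL} and $+1$ in case \textbf{SL}, so the sign assumption on $K$ in each case is precisely what makes $c$ correspond to a stable positive Klein-Gordon mass. The nonlinearities $\mathcal{N}^i$ and the source in the $\psi^0$ equation are semilinear in $(\psi,\p\psi)$, starting at quadratic order.

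Next I would prove global existence for this system with compactly supported data. Finite propagation confines the perturbation to a forward light cone, which I would foliate by hyperboloids $\mathcal{H}_\tau = \{t^2 - |x|^2 = \tau^2\}$ in the spirit of the LeFloch--Ma framework. The working quantity is a hyperboloidal energy applied to $Z^I\psi$, summed over admissible Minkowski vectorfields $Z$ (translations, Lorentz boosts, and rotations, excluding scaling since it does not commute with the Klein-Gordon mass) up to some order $N$ dictated by the Klainerman--Sobolev estimate on $\mathcal{H}_\tau$. A bootstrap would assume $\mathcal{E}(\tau, Z^I\psi)^{1/2} \leq C\epsilon\,\tau^{C\epsilon}$ at top order and an $O(\epsilon)$ bound at lower orders; hyperboloidal Klainerman--Sobolev then yields pointwise decay of order $\tau^{-(d-1)/2}$ for wave components and $\tau^{-d/2}$ for Klein-Gordon components, which feed back into the nonlinear sources to close the argument.

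The main obstacle I expect is closing the bootstrap at top order in the borderline dimension $d=3$, while simultaneously lowering the regularity threshold. The key structural input is that the Christoffels $\Gamma^0_{BC}$ for the wave component vanish on $\gamma$ in Fermi normal coordinates, together with enough of their transverse derivatives (constrained by the antisymmetry of Riemann in its first two indices), so that every would-be wave--wave quadratic nonlinearity $\p\psi^0 \cdot \p\psi^0$ in the $\psi^0$ equation is multiplied by at least one undifferentiated Klein-Gordon factor $\psi^i$; this converts what would otherwise be a critical wave--wave interaction in $d=3$ into a subcritical one via the $\tau^{-d/2}$ Klein-Gordon decay. The cross interactions $\p\psi^0 \cdot \p\psi^i$ and $\psi^i \cdot \p\psi^0$ in the $\psi^i$ equations are similarly controllable by Klein-Gordon decay but require careful bookkeeping of commutators of $Z^I$ with the mass term. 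The improved regularity threshold stems from the purely semilinear character of the reduction---no quasilinear correction survives the Fermi-coordinate expansion---which decouples the top-order energy estimate from the lower-order pointwise decay estimates and avoids commutator losses that typically cost additional derivatives.
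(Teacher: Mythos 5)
Your high-level strategy is essentially the paper's: reduce via Fermi (geodesic normal) coordinates along $\varphi_{\text{I}}(\R)$ to a semilinear wave--Klein-Gordon system, identify the mass $-\kappa\,\M(\D\varphi_{\text{S}},\D\varphi_{\text{S}})$ and its sign under \textbf{TL}/\textbf{SL}, then run a hyperboloidal vectorfield bootstrap. But there are two genuine gaps in how you deploy the spaceform hypothesis, and one mischaracterization of the regularity gain.

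First, you attribute the weak null structure to "the Christoffels vanishing on $\gamma$ in Fermi coordinates together with antisymmetry of Riemann in its first two indices." That accounts only for the linear source: $\p_m\Gamma^1_{11}|_\gamma = R_{m111}|_\gamma = 0$. The dangerous terms are the \emph{undifferentiated quadratic and cubic} products $\p_A\Gamma^\bullet_{11}|_\gamma\,\bm u^A$ with $|A|=2,3$ generated by the Taylor expansion of $\Gamma$ about $\gamma$: terms like $u^1 u^m$ and $(u^1)^2$ would be fatal because $u^1$ decays only like $t^{-1/2}$ in $d=3$ without a Morawetz multiplier. Killing $\p^2_{1m}\Gamma^\bullet_{\bullet 1}|_\gamma$, $\p^2_{pm}\Gamma^\bullet_{11}|_\gamma$, and $\p^3_{1\bullet\bullet}\Gamma^\bullet_{11}|_\gamma$ requires $\p_p R_{m11i}|_\gamma = 0$, which is \emph{not} an antisymmetry statement — it is a consequence of the constant-curvature formula $R_{ijkl}=\kappa(g_{ik}g_{jl}-g_{il}g_{jk})$ together with $\p_k g_{ij}|_\gamma=0$ (this is Lemma~\ref{lem:weaknull2}). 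Your account of the structural mechanism is therefore incomplete, and a reader following it could not reconstruct why only spaceform targets (or those agreeing with one to high order along $\gamma$) are admissible. Second — and this is the more serious omission — the spaceform assumption is also what makes the Taylor coefficients $R_{m1ji}(\ell,\vec 0)$ and $\p_A\Gamma(\ell,\vec 0)$ \emph{constant} as functions of $\ell$. Since $\ell$ is a linear function of $(t,x)$, commuting the equations with a Lorentz boost produces $L^j[F(\ell)]\approx t\,F'(\ell)$, and in the \textbf{SL} case ($\ell=x^1$) rotations do the same. If $F'\neq 0$, this $t$-growth in the coefficients is precisely of the order that ruins the $\tau$-weights in your bootstrap at higher commutator orders; your sketch commutes with $Z^I$ but never addresses this, and it would fail on a generic target (this is the content of Remark~\ref{generalization}).

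Finally, your explanation of the low regularity threshold (that semilinearity decouples top-order energy from lower-order decay) does not match the paper's actual mechanism: the paper drops rotations from the commutator algebra, uses only the boosts $L^i$, and compensates with the interpolated Gagliardo--Nirenberg--Sobolev estimates of \cite{GNAWS} on hyperboloids, together with top-order growth $\tau^\gamma$ with $\gamma<1/2$. Your vectorfield set includes rotations and your $\tau^{C\epsilon}$ bootstrap is the standard one, which would close only at a higher Sobolev index. This is acceptable for the rough theorem as stated (smooth data), but you should not claim semilinearity alone as the source of the regularity improvement.
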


This paper is organized as follows: we first give a brief discussion of harmonic and wave maps and describe our motivation to study the problem at hand by drawing comparisons to some known results. The rest of the introduction is dedicated to providing an explanation of Theorem \ref{roughthm}, including  our approach for setting up the problem and highlighting potential difficulties and how we overcome them. Section \ref{geometry} is dedicated to geometric preliminaries, which we use in Section \ref{equations} to derive the equations of motion for the perturbations. Next we set up the analytical framework for our proof in Section \ref{section:tools}. Finally, Sections \ref{section:negcurv} and \ref{section:negpos} are dedicated to the proof of Theorem \ref{roughthm} in the settings of {\textbf{TL}} and {\textbf{SL}}, respectively. 

\subsection{Some background and motivation for our problem} A map $\phi : N \to M$ between two Riemannian manifolds $(N,h)$ and $(M,g)$ is said to be \emph{harmonic} if it is a critical point of the action\footnote{In the forthcoming discussion, the word ``energy'' does not represent the action $\mathcal{S}[\phi]$. Rather, it will always refer to the $L^2$ energy on spacelike hypersurfaces of $\R^{1+d}$.}
\begin{equation}
\mathcal{S}[\phi] \eqdef \frac{1}{2} \int_N \langle \D \phi, \D \phi\rangle_{T^*N\otimes \phi^{-1}TM} \ \dvol_h.
\end{equation}
In local coordinates on $(M,g)$, the Euler-Lagrange equations (ELE) take the form
\begin{equation}\label{EL-Harmonic}
\Delta_h \phi^i + \Gamma_{jk}^i(\phi) \langle \D \phi^j, \D \phi^k\rangle_h = 0.
\end{equation}
Here $\Delta_h$ is the Laplace-Beltrami operator on $N$ and $\Gamma_{jk}^i(\phi)$ are the Christoffel symbols of $M$ evaluated along the image of $\phi$.

In their famous paper \cite{eells1964harmonic}, Eells and Sampson showed that if $\phi$ is a harmonic map from a compact manifold $N$ with nonnegative Ricci curvature to a target $M$ with nonpositive sectional curvature, then it is also \emph{totally geodesic}\footnote{A map $\phi: N \to M$ is totally geodesic if it maps every geodesic of $N$ onto a geodesic of $M$.}. On the other hand, it follows from a straightforward computation that every {totally geodesic} map is harmonic. Moreover, a well known result of Vilms shows that ``if $N$ is complete, then every totally geodesic map $\phi : N \to M$ {factors} as 
\begin{equation}\label{eq:factorTG}
\begin{tikzcd}
N \arrow{r}{\Phi_{\text{S}}} & B \arrow{r}{\Phi_{\text{I}}} & M;
\end{tikzcd}
\end{equation}
with $\Phi_{\text{S}}$ a Riemannian submersion and $\Phi_{\text{I}}$ a Riemannian immersion, both being totally geodesic" \cite{vilms1970totally}. This motivates our setting where our background solutions are of the form \eqref{eq:factor}, as factored totally geodesic maps of the form \eqref{eq:factorTG} are a well-studied class of harmonic maps. 

Of course, our source is the \emph{Lorentzian} manifold $\R^{1+d}$ equipped with the standard Mikowski metric $\M = \diag(-1,1,\dots,1)$. In the case that the source metric $h$ is Lorentzian, the elliptic equation \eqref{EL-Harmonic} becomes a hyperbolic equation, and analyzing the solution $\phi$ amounts to an initial value problem. In this setting we say a solution to the ELE is a \emph{wave map} and, in the case where $(N,h) = (\R^{1+d},\M)$, we look for a solution to the equation\footnote{In this paper greek indices run $\mu = (0,1,\dots,d)$ and index coordinates on $\R^{1+d}$. Latin indices run $i = (1,\dots,n)$ and index local coordinates on the target $M$. Repeated indices will always be summed using Einstein summation notation. We also use the short hand $\partial_\mu = \tfrac{\p}{\p x^\mu}$ and sometimes identify $t = x^0$.}
\begin{equation}\label{wavemaps}
\Box_\M \phi^i + \Gamma_{jk}^i (\phi) \M^{\mu\nu} \partial_\mu \phi^j \partial_\nu\phi^k = 0.
\end{equation}
The theory of wave maps has a rich history, for a general review see \cite{WMBook, krieger2007global}. For now we remark that our results apply to the physically relevant cases where the target $M^n = \mathbb S^n$ or $\mathbb H^n$. The former case models the nonlinear sigma model in plasma physics \cite{gell1960axial}, while the latter has applications in general relativity \cite{Bruhat}. 

Since our composed totally geodesic map $\varphi_{\text{I}}\circ \varphi_{\text{S}}$ is also a wave map, we study its stability under small initial perturbations in the Cauchy problem for \eqref{wavemaps}. We emphasize that, as $\varphi_{\text{S}} : \R^{1+d} \to \R$ is an orthogonal projection onto a 1-dimensional subspace, it automatically satisfies the linear wave equation. Furthermore, the total geodesy of the composed map trivially implies that the image of $\varphi_{\text{I}}$ is a geodesic in $M$. 

Sideris and Grigoryan have previously studied stability of factored (non-totally geodesic) wave maps
\[\begin{tikzcd}
\R^{1+d} \arrow{r}{\varphi_{\text{W}}} & \R \arrow{r}{\varphi_{\text{G}}} & M
\end{tikzcd}\]
\cite{Sideris, Grigoryan}. In his paper, Sideris was motivated to study the stability of wave maps \emph{localized} to a geodesic to overcome singularity issues discovered in \cite{shatah}, where singular solutions for the nonlinear $\sigma$-model $\R^{1+3} \to \mathbb S^3$ were constructed whose range contained a hemisphere. Our problem is related to \cite{Sideris, Grigoryan} in that their background is also the composition of a geodesic $\varphi_{\text{G}}$ and a solution to the linear wave equation $\varphi_{\text{W}}$. Contrastingly, their $\varphi_{\text{W}}$ is an arbitrary \emph{finite energy} solution to the linear wave equation and hence $\varphi_{\text{G}}\circ\varphi_{\text{W}}$ is {not} totally geodesic. This provides yet another motivation for our problem where we  assume that $\varphi_{\text{S}}$ is assumed to be a semi-Riemannian submersion and hence has \emph{infinite} total energy. This introduces considerable difficulties as the finite energy backgrounds of \cite{Sideris, Grigoryan} \emph{decay} at the expected rate of finite-energy waves, whereas ours are non-decaying. 

\subsection{Explanation of results} In this subsection we clarify the geometric set-up for Theorem \ref{roughthm} and expand on the precise analytical difficulties and conclusions of the result.   

In this paper we adapt the geometric framework of \cite{Sideris, Grigoryan}, where we write the equations of motion for the perturbation in a tubular neighborhood $\R \times \mathcal{N}$ of the geodesic $\varphi_{\text{I}}(\R)\subset M$ (here $\R$ parametrizes the geodesic and $\mathcal{N}$ the normal $(n-1)$-directions). The main geometric contribution of this paper is Proposition \ref{decoupledeqs}, which shows that the equations for the perturbation $\bm u = (u^1,\vec u \hspace{.05cm}) \in \R \times \mathcal{N}$ decouple into a system of wave and Klein-Gordon equations: 
\begin{equation} \label{W-KGsystem}
\begin{cases}
\Box u^1 =  F^1\bm u \cdot  \M(\D  \bm u,\D \varphi_{\text{S}}) + O(|\bm u|^3 + |\partial\bm u|^3),  \\
\Box \vec{u} - \vec{M} \vec{u}  = \vec F\bm u \cdot \M(\D  \bm u,\D \varphi_{\text{S}}) + O(|\bm u|^3 + |\partial \bm u|^3).
\end{cases}
\end{equation}
Here $F^1, \ \vec F$ are functions of the curvature of $(M,g)$ restricted to the geodesic $\varphi_{\text{I}}$. The $\vec M$ are the masses of $\vec u$, and as a consequence of the spaceform assumption on $M$, Proposition \ref{decoupledeqs} implies $\vec M = \kappa \M(\D \varphi_{\text{S}},\D \varphi_{\text{S}})$ where $\kappa$ is the sectional curvature of $M$. Hence, the assumptions on $\varphi_{\text{S}}$ in Theorem \ref{roughthm} are there to at minimum guarantee linear stability, i.e. make the Klein-Gordon terms $\vec u$ have positive masses. 

The computations leading to Proposition \ref{decoupledeqs} and \eqref{W-KGsystem} hinge on a careful Taylor expansion of the Christoffel symbols $\Gamma$ about the geodesic $\varphi_{\text{I}}(\R)$. This is where our geometric approach differs from that of \cite{Sideris, Grigoryan}. In their works, the authors need only perform a rough quadratic Taylor expansion because they are able to utilize the decay properties of their background. In the current paper, we perform a precise \emph{cubic} expansion to capture the lowest order nonlinear structures. As we will see, our precise control on these Taylor coefficients reveal weak null-structures that prevent resonant interactions that could lead to finite-time blow up, see \eqref{weaknull} and Lemma \ref{christoffder2}. Finally, we remark that the geometry of the target manifold $M$ in \cite{Sideris, Grigoryan} is arbitrary. Morally, the premise for their stability result is that their background solution converges to the same point in $M$ (as a consequence of finite energy!) as one moves in any direction on $\R^{1+d}$ to infinity. As our background is not decaying, moving along generic directions on $\R^{1+d}$ does not imply that the image $\varphi_{\text{I}}\circ \varphi_{\text{S}}$ in $M$ converges to a single point. Our spaceform assumption is then a natural way to ensure some sort of homogeneity of the geometry of $M$ as one moves towards infinity on $\R^{1+d}$ along the mapping $\varphi_{\text{I}}\circ \varphi_{\text{S}}$. 

\begin{rmk}\label{generalization}
As we will see, for the energy estimates of higher derivatives of $\bm u$ we need first and second order commutations of the equations \eqref{W-KGsystem} with the Lorentz boosts $L^i = t \p_{x^i} + x^i\p_t$. Under the spaceform assumption, the functions $F$ and $\vec F$ are \emph{constant} and hence vanish when differentiated. In the case that the curvature is \emph{not} constant, these coefficients can grow: $L^i F \approx t (F')$. Using the weak null structures revealed in \eqref{weaknull} and \eqref{weaknull2}, each order of Taylor expansions introduce an additional Klein-Gordon factor (which has a linear decay rate of $|\vec u \hspace{.05cm}| \lesssim t^{-d/2}$):
\[ F = \sum_{|\alpha| \le N} \p^\alpha F(0) (\vec u \hspace{.05cm})^{N} + O(|\vec u\hspace{.05cm}|^{N+1}).\]
As $d \ge 3$, this decay can \emph{overcome} the aforementioned growth. And, consequently, we can easily relax the spaceform assumption to targets $(M,g)$ with the following property: \emph{along $\varphi_{\text{I}}(\R)$, the metric $g$ agrees with a spaceform up to fourth order}. See also Remark \ref{generalization2}
\end{rmk}

The main analytic contributions of the present paper are Theorems \ref{maintheorem1} and \ref{maintheorem2}, which provide an open set (in a suitable Sobolev topology) of initial data such that the Cauchy problem for \eqref{W-KGsystem} has a global solution in spatial dimension $ d= 3$. For our analysis of the equations of motion we use the physical space vectorfield method and its related energy estimates. 

\begin{rmk}[Dimensionality and strongly coupled nonlinearities]
We restrict the proof of the main theorem and the discussions below to $d = 3$ because stability of quadratic wave--Klein-Gordon systems is a known standard result in dimensions $d \ge 4$, see \cite{sogge}. This leaves the stability problem for the class of totally geodesic wave maps studied here open when $d = 2$.

Nonlinear wave--Klein-Gordon systems in two spatial dimensions have seen dramatic progress in recent years \cite{ifrim2019global, WKG2D, ma2020global, duan2020global, dong2021stability}. The nonlinear interactions present in \eqref{W-KGsystem} fall into a broader class called \emph{strongly coupled nonlinearities}, initially proposed (but not proven to be stable) by Ma in \cite{WKG2D}. The difficulties associated to strong coupling are intimately related to a breaking of conformal invariance of the wave equation; when this conformal invariance is available, it leads to stronger dispersive decay \cite{Wong2017}. Nevertheless, in the recent celebrated works of \cite{ma2020global, duan2020global}, Ma and Duan were able to prove a preliminary stability result to the wave--Klein-Gordon system \eqref{W-KGsystem} in two space dimensions. Finally, the incredibly interesting recent work \cite{dong2021stability} of Dong and Wyatt gives a full resolution of the global stability of totally geodesic maps satisfying \eqref{eq:factor} in two space dimensions. Their new ideas include an $L^1$ version of a conformal energy estimate and a key divergence structure in the equations \eqref{W-KGsystem} to apply a normal form transformation. This transformation turns the critical quadratic nonlinearities into cubic ones which are no longer critical. Their purely physical space approach produces uniform control for the low derivative energy levels.
\end{rmk}

We note that, for $d = 3$, global existence of coupled wave and Klein-Gordon equations is known, see the monograph by LeFloch and Ma \cite{LeFloch}. In the present manuscript we give a short proof of their result using the hyperboloidal method developed in \cite{Wong2017}, which is a geometric refinement of \cite{Klaine1985a, LeFloch}. We remark that in that same article, Wong proved global existence for \eqref{wavemaps} for all $d \ge 2$ where $\phi$ is a small perturbation of a constant.  Our main analytical tools are Wong's geometric formulation of the weighted $L^2$---$L^\infty$ Sobolev inequalities adapted to the Lorentz boosts and the interpolated versions of the Gagliardo-Nirenberg-Sobolev inequality adapted to the hyperboloids developed by Wong and the first author \cite{GNAWS}. This allows us to avoid using the purely spatial rotations, and as our most important analytic contribution, to prove stability of \eqref{W-KGsystem} assuming that the initial data is in $H^3$, see Remark \ref{regularity}. To the best of the author's knowledge, the best prior results in $d = 3$ using purely physical space techniques for wave--Klein-Gordon systems was stability with initial data at the level of $H^6$ \cite{LeFloch}.

\begin{rmk}
We note that there are technical differences between the systems studied in \cite{LeFloch} and \eqref{W-KGsystem}. LeFloch and Ma considered a \emph{quasilinear} system of wave--Klein-Gordon equations, which introduces additional difficulties. On the other hand, their nonlinearities satisfy the \emph{classical} null condition of Klainerman \cite{Klaine1984a} which allows them to extract \emph{improved} decay from all quadratic nonlinearities. We emphasize that our nonlinearities do not satisfy the classical null condition, and hence we are not able to extract the improved decay present in \cite{LeFloch}.
\end{rmk}

\begin{rmk}[Regularity] \label{regularity}
To guarantee global existence it suffices that the initial perturbation is sufficiently small in $H^3\times H^2$; this level of smallness is enough to guarantee $C^1$ convergence. Note that a standard persistence of regularity argument implies that if initial data is in $H^4\times H^3$ with \emph{smallness} in $H^3\times H^2$, this guarantees that the solutions remain small in $H^3\times H^2$, converges to 0 in $C^1$, and has bounded $C^2$ norm globally. As we will see, pushing the regularity down to $H^3\times H^2$ requires our bootstrap mechanism to allow for \emph{growth} in the top order energies, see Proposition \ref{prop:bootstrap}. Roughly speaking, this is because the improved linear decay for Klein-Gordon derivatives $|\p \vec u\hspace{.05cm} | \lesssim t^{-3/2}$ is available using only the \emph{third} order energies (at the level of $H^4$). Instead, by sacrificing a decay factor of $t^{-1/2}$, we can rely on the interpolated Sobolev embeddings of \cite{GNAWS} to close the argument at the level of $H^3$. Had we assumed \emph{smallness} in $H^4$, our arguments could easily be adapted to prevent this growth, guaranteeing $C^2$ convergence. 
\end{rmk}

What makes our argument run through is that the spaceform curvature restrictions expose hidden weak null structures that make harmful wave--wave resonant terms from the quadratic and cubic nonlinearities vanish. More precisely, we will show that the \emph{undifferentiated} factor $u^1$ is missing in $\bm u\cdot \M(\D  \bm u, \D \varphi_{\text{S}})$ and $|\bm u|^3$ in equations \eqref{W-KGsystem}. We also show that the quadratic nonlinearity for the wave solution $u^1$ is of the form $\vec{u}\cdot \p \vec{u}$. 

There are numerous ramifications of these exposed null conditions. Firstly, as we are unable to use the Morawetz vectorfield as a multiplier, the available decay rate for $u^1$ in dimension 3 is $t^{-1/2}$. This means that terms of the form $(u^1)^2$ or $(u^1)^3$ (which are excluded by our exposed null structures) could lead to finite-time blow up.  Secondly, it is crucial that \emph{only} $\vec{u}$ appear in the quadratic nonlinearity for $u^1$ because its expected decay rate is $|\vec{u}\hspace{.05cm} | + |\p\vec{u}\hspace{.05cm}| \lesssim t^{-3/2}$, compared to the derivative wave decay $|\p u^1| \lesssim t^{-1}$. This improved decay for the nonlinearity of $u^1$ will feed-back into the Klein-Gordon equations when we try to estimate $|\vec{u}\cdot \M(\D u^1,\D \varphi_{\text{S}})|$, allowing us to close our estimates.

We conclude by stating that this geometric hyperboloidal method has previously been used to prove global nonlinear stability for certain infinite energy solutions to \emph{quasilinear} wave equations. The first author and Wong \cite{AbbWong2019} used these techniques to prove that the membrane equation 
\begin{equation}\label{membrane}
0 = \p_\mu\left( \frac{\M^{\mu\nu} \p_\nu\phi}{\sqrt{1 + \M(\D \phi,\D\phi)}}\right)
\end{equation}
admits an open set of initial data such that $\phi$ is a small perturbation of an infinite energy simple-plane-wave.

\section{Geodesic normal coordinates} \label{geometry}

In this section we set up the geometric tools and notations needed for the rest of the manuscript. We first consider the case where $M$ is an arbitrary complete Riemannian manifold and later specialize to the spaceform setting.

We will construct a system of coordinates for a tubular neighborhood of an arbitrary geodesic, in which the restriction of the Christoffel symbols to the geodesic vanish. For a comprehensive treatment of the geometry of geodesic normal coordinates, see the book by Alfred Gray \cite{Gray}. He analyzes a generalization of geodesic normal coordinates called \emph{Fermi coordinates}. They give a local description of a tubular neighborhood about an embedded submanifold $P \subset M$ of arbitrary codimension.

Consider our complete Riemannian manifold $ (M^n,g)$ and let $\gamma : \R \to M$ be a fixed geodesic parametrized by arc-length. Let $\mathcal{V} = \{ (\gamma(t), v) \ | \ t \in \R, \ v \in T_{\gamma(t)}M^\perp\}$ denote the normal bundle along the geodesic $\gamma$. We write $\mathcal{V}_\gamma$ for the fibres above $\gamma$ and $\mathcal{V}_{\gamma(t_0)}$ when we wish to specify the fibre above a specific point $\gamma(t_0)$. We now construct an explicit local orthonormal frame of a subbundle of $\mathcal{V}$ and use it to define the so called geodesic normal coordinates by the exponential map.

We will parametrize the tubular neighborhood of $\gamma$ by $\R \times \mathcal{N}$, where 
\[\mathcal{N} \eqdef \{ \vec{x} = (x^2,\dots,x^n) \in \R^{n-1} \ | \ |\vec{x} \hspace{.05cm}| <  r_{\foc}(\gamma)\}.\]
Here $r_{\foc}(\gamma)$ is the \emph{focal radius of $\gamma$}, which is defined to be the maximal radius such that the normal exponential map, see \eqref{eq:def:normalexp}, is non-critical on the normal disc bundle of $\gamma$ of radius $r_{\foc}(\gamma)$. In the subsequent analysis of the wave map problem, we can guarantee that $r_{\foc}(\gamma) > 0$ because of the spaceform assumption.
\begin{rmk}\label{generalization2}
The generalization of Remark \ref{generalization} to targets $(M,g)$ such that the metric agrees with a spaceform up to fourth order along the geodesic $\varphi_{\text{I}}(\R)$ should also be accompanied with the following assumptions:
\begin{itemize}
\item the focal radius $r_{\foc}(\gamma)$ is bounded away from zero;
\item higher derivatives of the metric \emph{in geodesic normal coordinates} are bounded away from infinity.
\end{itemize}
\end{rmk}

Denote $e_1 \eqdef \dot \gamma(0)$ and use it to define an orthonormal basis 
\[ e^\perp \eqdef ({e_2,\dots,e_n})\] 
of $\mathcal{V}_{\gamma(0)}$. For arbitrary $x^1 \in \R$ let $(e_1(x^1), e^\perp(x^1))$ be defined by parallel transporting $(e_1,e^\perp)$ along $\gamma$ and note that $e^\perp(x^1)$ is an orthonormal frame for $\mathcal{V}_{\gamma(x^1)}$. Also note that $e_1(x^1) = \dot \gamma(x^1)$ by definition.

For $(x^1, \vec x \hspace{.05 cm}) \in \R \times \mathcal N$, define $\gamma^\perp(x^1; \vec{x},s)$ as the unique geodesic (with path parameter $s$) defined by 
\[ \gamma^\perp(x^1;\vec{x},0) = \gamma(x^1),\qquad \dot\gamma^\perp(x^1; \vec{x},0) = \sum_{k=2}^n x^k e_k(x^1).\]
\begin{rmk}\label{rmk:originalgeo}
We identify the original geodesic $\gamma$ with $\{\vec{x} \equiv 0\}$ because we have $\gamma^\perp(x^1;0,s) = \gamma^\perp(x^1;0,0) = \gamma(x^1)$ for any $s \in [0,1]$ by uniqueness of ODEs. 
\end{rmk}
 We can now define the normal exponential map
\begin{equation} \label{eq:def:normalexp}
 \exp^\perp_{\gamma(x^1)}(\vec{x}\hspace{.05cm}) \eqdef \gamma^\perp(x^1,\vec{x},1),
\end{equation}
 which is a map
 \[ \exp^\perp_{\gamma(x^1)} : \mathcal{V}_{\gamma(x^1)} \to M.\] 
This normal exponential map shares many features with the usual one from Riemannian geometry. For example, the inverse function theorem and the following computation show that $\mathcal{N}$ is non-trivial and that $\exp^\perp_{\gamma(\cdot)}(\cdot)$ is indeed a smooth immersion from $\R \times \mathcal{N}$ to a a tubular neighborhood, which we denote as $\mathcal{T}$, around $\gamma \subset M$:
\begin{equation} \label{expder}
\begin{aligned}
\D \left(\exp^\perp_{\gamma(x^1)}\right)_{\{ \vec{x} = 0\}} (\vec{y}) & = \frac{d}{ds} \bigg|_{s = 0} \exp^\perp_{\gamma(x^1)}( s \vec{y})   = \frac{d}{ds} \bigg|_{s = 0} \gamma(x^1; s\vec{y},1) \\
& =  \frac{d}{ds} \bigg|_{s = 0} \gamma(x^1; \vec{y},s)\\
& = \vec{y}.
\end{aligned}
\end{equation}
\begin{rmk} \label{expder1}
Of course, the ``$\D$" in \eqref{expder} denoted the differential of the map in the $\vec{x}$ variables. Since $\exp^\perp_{\gamma(\cdot)}(\cdot)$ is technically a map on domain $\R \times \mathcal N$, in the future we write $\D_{x^1}$ as the differential in the $x^1$ variable. It is easy to see that, restricted to $\{\vec{x} = 0\}$, 
\[ \D_{x^1}\left(\exp^\perp_{\gamma(y^1)}\right)_{0} (e_1) = e_1(y^1).\]
\end{rmk}

 This allows us to define the geodesic normal coordinates by the preimage of the exponential map 
\begin{center}
\begin{tikzcd}
\mathcal{T} \arrow{r}{\exp^{-1}_\gamma} & \R \times \mathcal{N}.
\end{tikzcd}
\end{center}
More explicitly, if \[ q = \exp_{\gamma(x^1)}(\vec{x}) \in \mathcal{T},\] then $q$ can be written in geodesic normal coordinates by  $(x^1,\vec{x}\hspace{.05cm}) = (\exp^\perp)^{-1}_\gamma(q)$.
\begin{rmk}
Here $(\exp^\perp_\gamma)^{-1}$ is the pre-image of the exponential map. Technically, $\exp^\perp_\gamma$ it is not a bona fide diffeomorphism near self intersections of $\gamma$. In the case that $\gamma$ is an embedded geodesic, then $\exp^\perp_\gamma$ is a true diffeomorphism. As we are in the perturbative regime for the ensuing analysis of the wave maps equation, our considerations are local so we will ignore any self intersections.

\end{rmk}
The following lemma sets up the key geometric tools that we need for our analysis. Even though the proof is standard, we include it for the sake of completion:
 
\begin{lem} \label{geotools} Let $\tfrac{\p}{\p x^i}, \ i = 1,\dots,n$ be the coordinate vectorfields defined by $(x^1,\vec{x})$. Let $y^1 \in \R$ be arbitrary. Then the following identities hold
\begin{align}
\frac{\partial}{\partial x^i}\bigg|_{\gamma(y^1)} & = e_i(y^1)  & i \in\{1,\dots,n\}  \label{coordvect} \\
g_{ij}\bigg|_{\gamma(y^1)} & = \delta_{ij}  & i,j \in\{1,\dots,n\}\label{deltametric} \\
\p_k g_{ij} \bigg|_{\gamma(y^1)} & = 0   & i,j,k \in\{1,\dots,n\} \label{metricder} \\
\nabla_{\p_{x^i}} \p_{x^j}\bigg|_{\gamma(y^1)} & = 0  & i,j \in\{1,\dots,n\}  \label{covder} \\
\Gamma_{ij}^k\bigg|_{\gamma(y^1)} & = 0  & i,j,k \in\{1,\dots,n\} \label{christoff} \\
 \label{christoffder} \p_m \Gamma_{ij}^k \bigg|_{\gamma(y^1)} &=  \p_m \lrangle{\nabla_{\p_{x^i}}\p_{x^j},\p_{x^k}}_g \bigg|_{\gamma(y^1)}& i,j,k,m \in \{1,\dots,n\} \\
\label{christoff2der} \p^2_{mp} \Gamma_{ij}^k \bigg|_{\gamma(y^1)} &=  \p^2_{mp} \lrangle{\nabla_{\p_{x^i}}\p_{x^j},\p_{x^k}}_g \bigg|_{\gamma(y^1)}& i,j,k,m,p \in \{1,\dots,n\}
\end{align}
\end{lem}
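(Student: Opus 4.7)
The plan is to follow the standard Fermi coordinate strategy, exploiting two defining properties of the construction: (i) the frame $(e_1(x^1), e^\perp(x^1))$ is defined by parallel transport along $\gamma$, and (ii) each radial curve $s \mapsto \gamma^\perp(y^1; \vec{y}, s)$ is by construction a geodesic in $M$. From the coordinate definition $\exp^\perp_{\gamma(y^1)}(s\vec{y}) = \gamma^\perp(y^1;\vec{y},s)$, this geodesic is represented in $(x^1,\vec{x})$ coordinates as $(y^1, s\vec{y})$.

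I would first dispatch \eqref{coordvect}: for $i \in \{2,\dots,n\}$ this is precisely the computation already carried out in \eqref{expder}, while $i = 1$ is the content of Remark \ref{expder1}. Then \eqref{deltametric} is immediate since parallel transport is an isometry of tangent spaces, so the frame $(e_i(y^1))$ remains orthonormal along $\gamma$.

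The heart of the proof is \eqref{christoff}, which I would establish by a two-case argument. For $i,j \in \{2,\dots,n\}$, write out the geodesic equation for $s \mapsto (y^1, s\vec{y})$ in $(x^1,\vec{x})$ coordinates; since the second derivatives $\ddot{x}^k$ vanish and only the components $\dot{x}^i = y^i$ with $i \geq 2$ are nonzero, the equation reduces to
\[\Gamma^k_{ij}(y^1, s\vec{y})\, y^i y^j = 0\]
for every $s$ and every sufficiently small $\vec{y} \in \R^{n-1}$. Setting $s = 0$ and polarizing in $\vec{y}$, together with the symmetry $\Gamma^k_{ij} = \Gamma^k_{ji}$ of the Levi-Civita connection, yields $\Gamma^k_{ij}|_{\gamma(y^1)} = 0$ whenever $i,j \geq 2$. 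For the remaining cases where a lower index equals $1$: since the $\p_{x^i}|_\gamma = e_i(\cdot)$ are parallel along $\gamma$, we obtain $\nabla_{\p_{x^1}} \p_{x^i}|_{\gamma(y^1)} = 0$ for every $i$ (including $i=1$, because $\gamma$ is a geodesic so $\nabla_{\dot\gamma}\dot\gamma = 0$); the torsion-free identity $\nabla_{\p_{x^i}}\p_{x^1} = \nabla_{\p_{x^1}}\p_{x^i}$ then handles the mixed combinations.

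With \eqref{christoff} in hand, the remaining identities are short consequences. \eqref{covder} is just $\nabla_{\p_{x^i}}\p_{x^j} = \Gamma^k_{ij}\p_{x^k}$, and \eqref{metricder} follows from metric compatibility $\p_k g_{ij} = \Gamma^l_{ki}g_{lj} + \Gamma^l_{kj}g_{il}$, both terms of which vanish at $\gamma(y^1)$ by \eqref{christoff}. Finally, \eqref{christoffder} and \eqref{christoff2der} come from differentiating the pointwise identity $\lrangle{\nabla_{\p_{x^i}}\p_{x^j}, \p_{x^k}}_g = \Gamma^l_{ij} g_{lk}$ once or twice in the $\p_m$ (and $\p_p$) directions: in the Leibniz expansion, every cross term contains either an undifferentiated $\Gamma^l_{ij}$ or an undifferentiated $\p g$, both of which vanish on $\gamma$ by \eqref{christoff} and \eqref{metricder}; the sole surviving contribution produces the stated identity after reducing $g_{lk}|_\gamma = \delta_{lk}$ via \eqref{deltametric}. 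The only delicate point in the whole argument is the polarization step for \eqref{christoff}, where one must invoke the symmetry of $\Gamma$ in its lower indices to conclude coefficient-wise vanishing from the vanishing of the quadratic form; no curvature or spaceform hypothesis enters anywhere, consistent with the lemma being stated for a general complete Riemannian $M$.
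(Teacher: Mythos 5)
Your proposal is correct and follows essentially the same strategy as the paper's proof: parallel transport gives \eqref{coordvect}, \eqref{deltametric}, and the $\nabla_{\p_{x^1}}$-derivatives; the radial geodesics together with a polarization argument handle the remaining covariant derivatives; and the Leibniz expansion with \eqref{deltametric}, \eqref{metricder} gives \eqref{christoffder}, \eqref{christoff2der}. The only cosmetic difference is that you establish \eqref{christoff} first via the coordinate geodesic equation and deduce \eqref{covder} from it, while the paper works intrinsically with $\nabla_X X = 0$ (polarizing by taking $X = \p_{x^i} + \p_{x^j}$) to obtain \eqref{covder} directly and then reads off \eqref{christoff}; the two are logically equivalent.
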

\begin{proof}
The proof of \eqref{coordvect} for $i \in \{2,\dots,n\}$ follows by definition and \eqref{expder}
\[\frac{\p}{\p x^i} \bigg|_{\gamma(y^1)} = \D \left(\exp_{\gamma(y^1)}\right)_{\vec{x} = 0}\left(e_i(y^1)\right) = e_i(y^1).\]
The case of $i = 1$ follows from the discussion in Remark \ref{expder1}: 
\[ \frac{\p}{\p x^1}\bigg|_{\gamma(y^1)} = e_1(y^1).\]
Equation \eqref{deltametric} follows immediately because $\{e_i(y^1) \ | \ i = 1,\dots, n\}$ were defined by parallel transporting an orthonormal set and because parallel transport is an isometry. 

The definition of parallel transport implies  
\[ \nabla_{\p_{x^1}} \p_{x^1} \bigg|_{\gamma(y^1)} = \nabla_{\p_{x^1}} \p_{x^j} \bigg|_{\gamma(y^1)} = 0 \]
for $j \in \{2,\dots,d\}$. Because we are using coordinate vectorfields and $\nabla$ is torsion free,
\[ \nabla_{\p_{x^m}} \p_{x^k} = \nabla_{\p_{x^k}} \p_{x^m}\]
even away from $\gamma$. This implies 
\[ \nabla_{\p_{x^j}} \p_{x^1} \bigg|_{\gamma(y^1)} = 0.\] 
Next we note that any $X \in \mathcal{V}_{\gamma(y^1)}$ is tangent to the curve $\exp_{\gamma(y^1)}(sX)$. Since this curve is a geodesic by definition, we have that $\nabla_X X = 0$. In particular we have that 
\[ 0 = \nabla_{\p_{x^i} + \p_{x^j}}(\p_{x^i} + \p_{x^j}) = \nabla_{\p_{x^i}}\p_{x^j} + \nabla_{\p_{x^j}}\p_{x^i}\]
whenever $i,j \in \{ 2,\dots,n\}$. The torsion condition and setting $s = 0$ proves
\[ \nabla_{\p_{x^i}} \p_{x^j} \bigg|_{\gamma(y^1)} = 0.\] 
This concludes the proof of \eqref{covder}.

Using that the Levi-Civita connection is metric, we see
\[ \p_{x^k} g_{ij} = \p_{x^k} \lrangle{\p_{x^i},\p_{x^j}}_g = \lrangle{\nabla_{x^k}\p_{x^i},\p_{x^j}}_g + \lrangle{\p_{x^i},\nabla_{x^k}\p_{x^j}}_g.\] 
Restricting this computation to $\gamma$ concludes the proof of \eqref{metricder} using \eqref{covder}.

For the Christoffel symbols, recall that they are defined by 
\[ \nabla_{\p_{x^i}}\p_{x^j} = \Gamma_{ij}^m \p_{x^m}.\]
Taking the inner product with $\p_{x^k}$, using the metric structure \eqref{deltametric}, and \eqref{covder} proves
\[ 0 = \lrangle{\nabla_{\p_{x^i}}\p_{x^j},\p_{x^k}}_g \bigg|_{\gamma(y^1)} = \Gamma_{ij}^k\bigg|_{\gamma(y^1)}.\]

Finally, we compute 
\[ \p_m  \lrangle{\nabla_{\p_{x^i}}\p_{x^j},\p_{x^k}}_g = \p_m ( \Gamma_{ij}^l g_{lk}) = \p_m \Gamma_{ij}^l g_{lk} + \Gamma_{ij}^l \p_m g_{lk}.\]
This and \eqref{deltametric}, \eqref{metricder} show \eqref{christoffder}. Similarly, \eqref{christoff2der} follows.

\end{proof}

\section{Perturbed system and Reduction to Wave--Klein-Gordon system} \label{equations}
We now return to the wave map equation and describe the precise construction for the perturbation to our totally geodesic background 
\[\begin{tikzcd}
\R^{1+d} \arrow{r}{\varphi_{\text{S}}} & \R \arrow{r}{\varphi_{\text{I}}} & M.
\end{tikzcd}\]
Recall that $\varphi_{\text{S}}$ is a semi-Riemannian submersion, so in accordance with the discussion in Section \ref{intro} regarding \cite{vilms1970totally}, we prescribe $\varphi_{\text{S}}$ to be a \emph{linear} function $\ell: \R^{1+d} \to \R$ satisfying $\M(\D \ell,\D \ell) = \pm 1$. As $\varphi_{\text{I}}$ is an immersed geodesic in $M$, we identify it with the zero cross-section about the normal bundle of $\varphi_{\text{I}}(\R) \subset M$ (see Remark \ref{rmk:originalgeo}):
\[\begin{tikzcd}
 \R \arrow[hook]{r}{\iota} & \R \times \mathcal{N} \arrow{r}{\exp^\perp_{\varphi_{\text{I}}}} & M;
\end{tikzcd}\]
where the first map is the inclusion and the second map is the restriction $\exp^\perp_{\varphi_{\text{I}}} \big|_{\vec x = 0}$.

Equipping $\R \times \mathcal{N}$ with the pull-back metric, we then look for maps of the form $\phi \eqdef \iota \circ \ell + \bm u$ which are solutions to the wave maps equation \eqref{wavemaps} on\footnote{As we will see, initial data for $\bm u$ can be chosen small enough so that $\iota \circ \ell + \bm u = (\ell + u^1, \vec u \hspace{.05cm}) \in \R \times \mathcal{N}$.}
\[ \begin{tikzcd}
\R^{1+d} \arrow{r}{\iota \circ \ell + \bm u\ } & \R \times \mathcal{N}.
\end{tikzcd}\]
Here addition is taken coordinate wise on $\R \times \mathcal{N}$. Consequently, the perturbation of our totally geodesic background takes the form
\[\begin{tikzcd}
\exp^\perp_{\varphi_{\text{I}}}(\iota \circ \ell + \bm u) : \R^{1+d} \arrow{r} & M
\end{tikzcd}\]
which is also a solution to the wave maps equation. Of course, $\bm u \equiv 0$ corresponds to the background $\varphi_{\text{I}} \circ \varphi_{\text{S}}$. As we consider $\ell$ fixed, the equations of motion \eqref{wavemaps} for $\phi$ reduce to a Cauchy problem for the perturbation $\bm u$.

Let $(x^1,\dots,x^n)$ be the geodesic normal coordinates about $\varphi_{\text{I}}$ constructed in section \ref{geometry}. In these coordinates $\bm u = (u^1,u^2,\dots,u^n) = (u^1,\vec{u}\hspace{.05cm})$ and hence $\phi$ takes the form 
\[ \phi = (\ell + u^1,u^2,\dots,u^n) = (\ell + u^1,\vec{u}\hspace{.05cm}).\] 
The equations of motion \eqref{wavemaps} take the form
\begin{equation}\label{perturbation}
\begin{aligned}
\Box u^1 + \Gamma_{jk}^1\left(\ell + u^1,\vec{u}\right) \cdot \M(\D \phi^j,\D \phi^k) & = 0, & \\
\Box u^i + \Gamma_{jk}^i\left(\ell + u^1,\vec{u}\right) \cdot \M(\D \phi^j,\D \phi^k) & = 0 ,& i \in \{2,\dots,n\}.
\end{aligned}
\end{equation}
We compute
\[\M(\D \phi^j,\D \phi^k) = \M(\D \ell,\D \ell) \delta_1^j \delta_1^k + \M(\D \ell,\D u^j)\delta_1^k + \M(\D \ell,\D u^k)\delta_1^j + \M(\D u^j,\D u^k).\]
Taylor expanding $\Gamma$ about the geodesic $\varphi_{\text{I}}\circ \ell$ we see
\[\Gamma_{jk}^i(\ell + u^1,\vec{u}) = \Gamma_{jk}^i(\ell,\vec{0}) + \sum_{m=1}^n\p_m \Gamma_{jk}^i(\ell,\vec{0})u^m + O(|\bm u|^2).\]
We pause at this juncture to make some reductions. From \eqref{christoff} we see that the first term on the right hand side vanishes. Moreover, since $\Gamma_{jk}^i(\ell, \vec{0}) = 0$ for arbitrary $\ell$, we see that 
\begin{equation}
\underbrace{\p_1\cdots\p_1}_{\text{$q$ times}} \Gamma_{jk}^i(\ell, \vec{0}) = 0  \label{weaknull}
\end{equation}
for all $i,j,k \in \{1,\dots,n\}$ and any positive integer $q$.

Before we expand the Christoffel symbols up to third order, we introduce the following notation: if $A$ is an $m$-tuple with elements drawn from $\{1,\dots,n\}$ (namely that $A = (A_1,\dots,A_m)$ with $A_i \in \{1,\dots,n\}$), for a scalar function $f$ we denote
\[ \p_A f \eqdef \p_{x^{A_1}}\cdots\p_{x^{A_n}} f.\]
By $|A|$ we refer to its length, namely $m$. Given a vector $\bm x = (x_1,\dots,x_n) \in \R^n$, we denote
\[ \bm x^A \eqdef x^{A_1}\cdots x^{A_m}.\]
We also introduce the shorthand 
\begin{equation}\label{sizeL}
\sigma \eqdef \M(\D\ell,\D\ell)
\end{equation}
to denote the size of $\D\ell$ as measured by the Minkowski metric. Expanding out the Christoffel symbols up to third order in $\bm u$, we see that the equations can be expressed as 

\begin{multline} \label{TaylorW}
\Box u^1 + \sum_{m=2}^n \p_m\Gamma_{11}^1(\ell,\vec{0}) u^m \cdot\sigma  =  \\
- 2 \sum_{m=2}^d  \p_m\Gamma_{j1}^1(\ell,\vec{0}) u^m \cdot \M(\D u^j,\D \ell) - \sum_{\substack{|A| = 2 \\ A \neq (1,1)}} \p_A \Gamma_{11}^1(\ell,\vec{0}) \bm u^A \cdot \sigma \\
- \sum_{m=2}^n\p_m\Gamma_{jk}^1(\ell,\vec{0}) u^m \cdot \M(\D u^j,\D u^k) - 2 \sum_{\substack{|A| = 2  \\ A \neq (1,1)}} \p_A \Gamma_{j1}^1(\ell,\vec 0)\bm u^A  \cdot \M(\D u^j, \D \ell) \\
-  \sum_{\substack{|A| = 3 \\ A \neq (1,1,1)}}^n \p_A \Gamma_{11}^1(\ell,\vec{0}) \bm u^A \cdot\sigma + \text{h.o.t.},
\end{multline}
\begin{multline}\label{TaylorKG}
\Box u^i + \sum_{m=2}^n \p_m\Gamma_{11}^i(\ell,\vec{0}) u^m \cdot\sigma  = \\
- 2 \sum_{m=2}^d  \p_m\Gamma_{j1}^i(\ell,\vec{0}) u^m \cdot \M(\D u^j,\D \ell) - \sum_{\substack{|A| = 2 \\ A \neq (1,1)}} \p_A \Gamma_{11}^i(\ell,\vec{0}) \bm u ^A \cdot \sigma \\
- \sum_{m=2}^n\p_m\Gamma_{jk}^i(\ell,\vec{0}) u^m \cdot \M(\D u^j,\D u^k) - 2 \sum_{\substack{|A| = 2 \\ A \neq (1,1)}} \p_A \Gamma_{j1}^i(\ell,\vec 0) \bm u^A \cdot \M(\D u^j, \D \ell) \\
-  \sum_{\substack{|A| = 3 \\ A \neq (1,1,1)}} \p_A \Gamma_{11}^i(\ell,\vec{0}) \bm u^A \cdot\sigma + \text{h.o.t.}
\end{multline} 
\begin{rmk}
To clarify, the sums involving $A$ on the right hand side of \eqref{TaylorW} and \eqref{TaylorKG} are summing over m-tuples $A = (A_1,\dots,A_m)$ \emph{excluding} the vertex $A_i = 1$ for all $i$. That is, for example,
\[ \sum_{\substack{|A| = 2  \\ A \neq (1,1)}} \p_A \Gamma_{11}^1 \bm u^\alpha \eqdef \sum_{\substack{\alpha_1, \alpha_2 = 1 \\ (\alpha_1,\alpha_2) \neq (1,1)}}^n \p_{x^{\alpha_1}}\p_{x^{\alpha_2}}\Gamma_{11}^1 u^{\alpha_1} \cdot u^\alpha_2.\]
That we are able to do this is of course a consequence of \eqref{weaknull}.
\end{rmk}
\begin{rmk}
As stated previously, repeated latin indices are implicitly summed over $\{1,\dots,n\}$ unless otherwise stated. For example, 
\[ \sum_{m= 2}^n \p_m \Gamma_{j1}^i u^m \cdot \M(\D u^j,\D \ell) \eqdef \sum_{\substack{m = 2 \\ j = 1}}^n \p_m \Gamma_{j1}^i u^m \cdot \M(\D u^j,\D \ell).\]
\end{rmk}

\begin{rmk}
In the equations ``h.o.t.'' represents higher order terms of the form
\[ \text{h.o.t.} \lesssim C_M (|\bm u|^4 + |\p \bm u|^4)\cdot f(\bm u,\p \bm u),\]
where $C_M$ denotes some constant depending on the derivatives of the Christoffel symbols of the target manifold restricted to the geodesic. Here $f: \R^{n(d+2)} \to \R$ is a smooth function depending on $M$.
\end{rmk}

We are able to find explicit formulas for the coefficients of the linear terms:
\begin{lem} \label{christoffder2}
Let $i,k,m \in \{1,\dots,n\}$. Then, restricted to the geodesic ${\varphi_{\text{I}}}$, we have
\[ \p_m \Gamma_{k1}^i \bigg|_{\varphi_{\text{I}}} = R_{m1ki}\bigg|_{\varphi_{\text{I}}}.\]
\end{lem}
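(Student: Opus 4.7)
My plan is to expand the coordinate formula for the Riemann curvature tensor in the geodesic normal chart $(x^1,\vec x\hspace{.05cm})$ and exploit the vanishing properties of $\Gamma$ established in Lemma \ref{geotools}. In any chart one has
\[ R_{ijk}{}^{l} = \p_i \Gamma_{jk}^l - \p_j \Gamma_{ik}^l + \Gamma_{jk}^s\Gamma_{is}^l - \Gamma_{ik}^s \Gamma_{js}^l. \]
Restricting this expression to $\varphi_{\text{I}}$ and invoking \eqref{christoff}, both of the quadratic $\Gamma\cdot\Gamma$ terms drop out, so $R_{ijk}{}^{l}\big|_{\varphi_{\text{I}}} = \p_i \Gamma_{jk}^l\big|_{\varphi_{\text{I}}} - \p_j \Gamma_{ik}^l\big|_{\varphi_{\text{I}}}$. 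Because the metric coincides with the Kronecker delta along $\varphi_{\text{I}}$ by \eqref{deltametric}, lowering the $l$-index with $g$ is trivial there, and specializing to the index pattern $(i,j,k,l)=(m,1,k,i)$ gives
\[ R_{m1ki}\big|_{\varphi_{\text{I}}} = \p_m \Gamma_{1k}^i\big|_{\varphi_{\text{I}}} - \p_1 \Gamma_{mk}^i\big|_{\varphi_{\text{I}}}. \]

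The substance of the proof is then the observation that the second term on the right vanishes. This is exactly the $q=1$ instance of \eqref{weaknull}: equation \eqref{christoff} holds for every $y^1\in\R$, so $\Gamma_{mk}^i$ is identically zero along the entire curve $\{\vec x = 0\}$, and since $\p_{x^1}$ is tangent to that curve, differentiating in the $x^1$-direction stays on the curve and produces zero. Combining this with torsion-freeness of $\nabla$, which gives the symmetry $\Gamma_{1k}^i = \Gamma_{k1}^i$, yields $\p_m \Gamma_{k1}^i\big|_{\varphi_{\text{I}}} = R_{m1ki}\big|_{\varphi_{\text{I}}}$, as claimed.

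There is no genuine analytical obstacle; the argument is essentially bookkeeping built on Lemma \ref{geotools}. The one item I would double-check before signing off is that the sign and index convention for $R_{m1ki}$ used here matches the convention implicit in the statement of the lemma (and hence in the downstream mass calculation for the Klein-Gordon part of \eqref{TaylorW}--\eqref{TaylorKG}), since a transposition of the first two indices of $R$ flips an overall sign and would otherwise propagate into the sign of $\vec M$.
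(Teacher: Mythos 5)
Your proof is correct, and it reaches the same identity by a slightly different route. The paper derives the result from scratch: it expands $\p_m\lrangle{\nabla_{X_k}X_1,X_i}_g$ by the Leibniz rule, swaps $\nabla_{X_k}X_1 = \nabla_{X_1}X_k$ by torsion-freeness, commutes covariant derivatives to produce the $R(X_m,X_1)X_k$ term, and restricts to the geodesic using \eqref{covder}--\eqref{christoffder} to kill the extraneous products of Christoffel symbols, landing on $\p_m\Gamma_{k1}^i = R_{m1ki} + \p_1\Gamma_{mk}^i$ before invoking \eqref{weaknull}. You instead take the standard coordinate expression $R_{ijk}{}^l = \p_i\Gamma_{jk}^l - \p_j\Gamma_{ik}^l + \Gamma\cdot\Gamma$ as given, restrict to $\{\vec x = 0\}$ where \eqref{christoff} kills the quadratic terms and \eqref{deltametric} makes index-lowering trivial, and then dispatch $\p_1\Gamma_{mk}^i$ with the same \eqref{weaknull}. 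This is the same calculation, with the difference that you invoke the coordinate formula for $R$ as a black box while the paper essentially rederives its restriction to $\varphi_{\text{I}}$; your version is a bit more economical. As for the caveat you flagged, the sign conventions do line up: the paper's proof reveals that it uses $R(X,Y) = \nabla_X\nabla_Y - \nabla_Y\nabla_X - \nabla_{[X,Y]}$ and $R_{ijkl} = g(R(\p_i,\p_j)\p_k,\p_l)$, which is exactly the convention under which your coordinate formula $R_{ijk}{}^l = \p_i\Gamma_{jk}^l - \p_j\Gamma_{ik}^l + \cdots$ holds, so the index pattern $R_{m1ki}$ matches with no stray sign.
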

\begin{proof}
Denote the coordinate vectorfields $\p_{x^i} = X_i$. Then compute
\begin{align*}
\p_m \lrangle{\nabla_{X_k} X_1,X_i}_g & = \lrangle{\nabla_{X_m}\nabla_{X_k}X_1,X_i}_g + \lrangle{ \nabla_{X_k}X_1,\nabla_{X_m}X_i}_g \\
& =  \lrangle{\nabla_{X_m}\nabla_{X_1}X_k,X_i}_g + \lrangle{ \nabla_{X_k}X_1,\nabla_{X_m}X_i}_g \\
& = \lrangle{R(X_m,X_1)X_k,X_i}_g + \lrangle{\nabla_{X_1}\nabla_{X_m}X_k,X_i}_g + \lrangle{ \nabla_{X_k}X_1,\nabla_{X_m}X_i}_g \\
& =  \lrangle{R(X_m,X_1)X_k,X_i}_g + \p_1\lrangle{\nabla_{X_m}X_k,X_i}_g \\
& \qquad \qquad \qquad - \lrangle{\nabla_{X_m}X_k,\nabla_{X_1}X_i}_g + \lrangle{ \nabla_{X_k}X_1,\nabla_{X_m}X_i}_g.
\end{align*}
Restricting to ${\varphi_{\text{I}}}$, equations \eqref{covder} and \eqref{christoffder} yield 
\[ \p_m \Gamma_{k1}^i \bigg|_{\varphi_{\text{I}}} = R_{m1ki}\bigg|_{\varphi_{\text{I}}} + \p_1 \Gamma_{mk}^i\bigg|_{\varphi_{\text{I}}}.\]
The second term on the right hand side vanishes due to \eqref{weaknull}.

\end{proof}

From this lemma we immediately see that $\p_m \Gamma_{11}^1 |_{\varphi_{\text{I}}} = R_{m111}|_{\varphi_{\text{I}}} = 0$ from the anti-symmetric property of the Riemann curvature tensor. 
 On the other hand, $\p_m\Gamma_{11}^i|_{\varphi_{\text{I}}} = R_{m11i}|_{\varphi_{\text{I}}}$, which in general does not vanish. We have then proved the following proposition.
\begin{prop}\label{decoupledeqs}
The perturbation equation \eqref{perturbation} decouples into the following system of wave and Klein-Gordon equations for the unknowns $(u^1,\vec{u}\hspace{.05cm})$:
\begin{multline} \label{waveu1}
\Box u^1  = \\
- 2 \sum_{m=2}^d R_{m1j1}(\ell,\vec{0}) u^m \cdot \M (\D u^j,\D \ell)  - \sum_{\substack{|A| = 2 \\ A \neq (1,1)}} \p_A \Gamma_{11}^1(\ell,\vec{0}) \bm u^A \cdot \sigma \\
- \sum_{m=2}^n\p_m\Gamma_{jk}^1(\ell,\vec{0}) u^m \cdot \M(\D u^j,\D u^k) - 2 \sum_{\substack{|A| = 2  \\ A \neq (1,1)}} \p_A \Gamma_{j1}^1(\ell,\vec 0)\bm u^A  \cdot \M(\D u^j, \D \ell) \\
-  \sum_{\substack{|A| = 3 \\ A \neq (1,1,1)}}^n \p_A \Gamma_{11}^1(\ell,\vec{0}) \bm u^A \cdot\sigma + \text{h.o.t.},
\end{multline}

\begin{multline}\label{KGui}
\Box u^i + \sum_{m=2}^nR_{m11i}(\ell,\vec{0}) u^m\sigma = \\
- 2 \sum_{m=2}^d R_{m1ji}(\ell,\vec{0}) u^m \cdot \M(\D u^j,\D \ell)  - \sum_{\substack{|A| = 2 \\ A \neq (1,1)}} \p_A \Gamma_{11}^i(\ell,\vec{0}) \bm u^A \cdot \sigma \\
- \sum_{m=2}^n\p_m\Gamma_{jk}^i(\ell,\vec{0}) u^m \cdot \M(\D u^j,\D u^k) - 2 \sum_{\substack{|A| = 2 \\ A \neq (1,1)}} \p_A \Gamma_{j1}^i(\ell,\vec 0) \bm u^A \cdot \M(\D u^j, \D \ell) \\
-  \sum_{\substack{|A| = 3 \\ A \neq (1,1,1)}} \p_A \Gamma_{11}^i(\ell,\vec{0}) \bm u^A \cdot\sigma + \text{h.o.t.}
\end{multline} 
\end{prop}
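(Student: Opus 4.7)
The proposition is essentially a corollary of the explicit Taylor expansions \eqref{TaylorW}, \eqref{TaylorKG} combined with Lemma \ref{christoffder2}, so my plan is mostly one of bookkeeping: substitute the curvature-Christoffel identity from the lemma into every linear-order coefficient and then exploit the symmetries of the Riemann tensor to obtain the precise mass structure.

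First I would take the already derived equations \eqref{TaylorW} and \eqref{TaylorKG} as my starting point and apply Lemma \ref{christoffder2} to each first-derivative Christoffel coefficient whose lower indices involve a $1$. Concretely, this converts $\p_m \Gamma_{11}^1 \mapsto R_{m111}$, $\p_m \Gamma_{j1}^1 \mapsto R_{m1j1}$, $\p_m \Gamma_{11}^i \mapsto R_{m11i}$, and $\p_m \Gamma_{j1}^i \mapsto R_{m1ji}$, all restricted to $\varphi_{\text{I}}$ (equivalently, evaluated at $(\ell,\vec 0)$). The quadratic and cubic coefficients $\p_A \Gamma^{i}_{jk}(\ell,\vec 0)$ with $|A|\ge 2$ are left unchanged; they already appear in \eqref{TaylorW}, \eqref{TaylorKG} in the form in which they are advertised in \eqref{waveu1}, \eqref{KGui}, so no further manipulation is needed there.

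The crucial observation, which produces the decoupling, is that $R_{m111}$ vanishes identically by the antisymmetry of the Riemann tensor in its last two indices. Consequently, the putative linear mass-coefficient $\p_m \Gamma_{11}^1 \cdot \sigma \, u^m$ on the left-hand side of \eqref{TaylorW} drops out, leaving a pure wave operator acting on $u^1$. Likewise, the quadratic wave-type interaction $-2\,\p_m \Gamma_{j1}^1 u^m \cdot \M(\D u^j,\D\ell)$ becomes $-2\, R_{m1j1} u^m \cdot \M(\D u^j,\D\ell)$, which is the first term on the right-hand side of \eqref{waveu1}. For the transverse components $u^i$ with $i\in\{2,\dots,n\}$, the coefficient $R_{m11i}$ is \emph{not} forced to vanish (antisymmetry in the last two indices still permits $R_{m11i}$ when $i\ne 1$), and this produces precisely the Klein-Gordon mass matrix $\vec M_{mi} = R_{m11i}\,\sigma$ appearing on the left of \eqref{KGui}, with the analogous substitution yielding the quadratic term $-2\,R_{m1ji} u^m \cdot \M(\D u^j,\D\ell)$ on the right.

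There is no real obstacle here beyond careful accounting: one must be sure that the sum restrictions $A \ne (1,1)$ and $A \ne (1,1,1)$ inherited from the weak null identity \eqref{weaknull} are preserved through the substitution (which they are, since Lemma \ref{christoffder2} affects only first derivatives), and one must verify that the higher-order remainder respects the size bound quoted in the preceding remark. Collecting terms after the substitutions gives exactly \eqref{waveu1} and \eqref{KGui}, completing the proof.
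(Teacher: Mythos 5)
Your proposal is correct and takes essentially the same route as the paper: it applies Lemma~\ref{christoffder2} to rewrite the linear-order coefficients in \eqref{TaylorW}--\eqref{TaylorKG} as curvature components, notes $R_{m111}=0$ by the antisymmetry of the Riemann tensor in its last two slots to remove the mass term from the $u^1$ equation, and keeps $R_{m11i}$ as the Klein--Gordon mass coefficient. The paper's argument is simply the two-sentence observation preceding the proposition; your version spells out the same bookkeeping (including the analogous substitutions for $\partial_m\Gamma^1_{j1}$ and $\partial_m\Gamma^i_{j1}$) in slightly more detail.
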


\subsection{Reductions when $M$ is a spaceform}
We now suppose that $(M,g)$ is a spaceform with constant sectional curvature $\kappa \neq 0$. In this case the Riemann curvature tensor has the following form:
\begin{equation}\label{curvature}
R_{ijkl} = \kappa(g_{ik}g_{jl} - g_{il}g_{jk}).
\end{equation}
This curvature restriction has the following immediate consequence:
\begin{lem} \label{lem:weaknull2}
Let $m, p \in \{2,\dots,n\}$ and denote $\bullet$ for any element of $\{1,\dots,n\}$. Then, restricted to the geodesic $\varphi_{\text{I}}$,
\begin{equation} \label{weaknull2}
\p^2_{1m} \Gamma_{\bullet 1}^\bullet  = \p^2_{pm} \Gamma_{1 1}^{\bullet} = \p^3_{1\bullet \bullet }\Gamma_{11}^\bullet = 0.
\end{equation}
\end{lem}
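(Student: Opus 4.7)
The plan is to establish the three identities in succession, with the second one requiring the most new work. First I would verify $\p^2_{1m}\Gamma^i_{k1}\bigr|_{\varphi_{\text{I}}} = 0$ by observing that since $\p_1$ is tangent to $\varphi_{\text{I}}$, it commutes with restriction:
\begin{equation*}
\p^2_{1m}\Gamma^i_{k1}\Big|_{\varphi_{\text{I}}} = \p_1\bigl((\p_m\Gamma^i_{k1})\big|_{\varphi_{\text{I}}}\bigr).
\end{equation*}
Lemma \ref{christoffder2} identifies the inner restricted quantity as $R_{m1ki}\big|_{\varphi_{\text{I}}}$. Combining the spaceform formula \eqref{curvature} with $g\big|_{\varphi_{\text{I}}} = \delta$ from \eqref{deltametric} gives $R_{m1ki}\big|_{\varphi_{\text{I}}} = \kappa(\delta_{mk}\delta_{1i} - \delta_{mi}\delta_{1k})$, a numerical constant independent of $x^1$, so its $\p_1$-derivative vanishes.

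Next I would establish a crucial spaceform symmetry for second partials of the Christoffel symbols. Starting from the coordinate identity $R^l_{ijk} = \p_i\Gamma^l_{jk} - \p_j\Gamma^l_{ik} + \Gamma^l_{im}\Gamma^m_{jk} - \Gamma^l_{jm}\Gamma^m_{ik}$, differentiating in $\p_p$ and restricting to $\varphi_{\text{I}}$ (where $\Gamma$ vanishes by \eqref{christoff}) yields
\begin{equation*}
\p_p R^l_{ijk}\Big|_{\varphi_{\text{I}}} = \p^2_{pi}\Gamma^l_{jk}\Big|_{\varphi_{\text{I}}} - \p^2_{pj}\Gamma^l_{ik}\Big|_{\varphi_{\text{I}}}.
\end{equation*}
For a spaceform, $R^l_{ijk} = \kappa(g_{ik}\delta^l_j - g_{jk}\delta^l_i)$, whose partial derivatives on $\varphi_{\text{I}}$ vanish by \eqref{metricder}. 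This forces the symmetry $\p^2_{pi}\Gamma^l_{jk}\big|_{\varphi_{\text{I}}} = \p^2_{pj}\Gamma^l_{ik}\big|_{\varphi_{\text{I}}}$, that is, one may swap a derivative index with a lower Christoffel index. Applying this to the second claimed identity, $\p^2_{pm}\Gamma^i_{11}\big|_{\varphi_{\text{I}}} = \p^2_{p1}\Gamma^i_{m1}\big|_{\varphi_{\text{I}}} = \p^2_{1p}\Gamma^i_{m1}\big|_{\varphi_{\text{I}}}$, which vanishes by the first identity since $p \geq 2$.

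For the third identity, I would again exploit that $\p_1$ commutes with all partials and with restriction:
\begin{equation*}
\p^3_{1bc}\Gamma^a_{11}\Big|_{\varphi_{\text{I}}} = \p_1\bigl((\p_b\p_c\Gamma^a_{11})\big|_{\varphi_{\text{I}}}\bigr),
\end{equation*}
then split on $b, c \in \{1,\ldots,n\}$. If both $b, c \geq 2$, the inner restricted quantity is zero by the second identity; if exactly one equals $1$, commutativity of partials plus the first identity gives zero; if $b = c = 1$, then $\p_1^3 \Gamma^a_{11}\big|_{\varphi_{\text{I}}} = 0$ by \eqref{weaknull}. In each case the inner quantity vanishes identically as a function of $x^1$, so its $\p_1$-derivative is likewise zero.

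The main obstacle is establishing the spaceform symmetry $\p^2_{pi}\Gamma^l_{jk}\big|_{\varphi_{\text{I}}} = \p^2_{pj}\Gamma^l_{ik}\big|_{\varphi_{\text{I}}}$, since it requires going beyond Lemma \ref{christoffder2} and genuinely invoking the parallel-curvature property of spaceforms (equivalent to $\p R\big|_{\varphi_{\text{I}}} = 0$). Once this symmetry is in hand, the first identity is essentially Lemma \ref{christoffder2}, the second reduces to the first by the symmetry, and the third is bookkeeping via case analysis.
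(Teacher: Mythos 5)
Your proposal is correct, and the overall plan matches the paper's: establish the first identity from Lemma \ref{christoffder2} together with the spaceform formula, derive the second by trading a derivative index for a lower Christoffel index, and obtain the third by differentiating the second along the geodesic. Where you differ is in implementation, and in both places your version is a bit cleaner. For the second identity, you obtain the symmetry $\p^2_{pi}\Gamma^l_{jk}\big|_{\varphi_{\text{I}}} = \p^2_{pj}\Gamma^l_{ik}\big|_{\varphi_{\text{I}}}$ by differentiating the coordinate identity $R^l_{\;ijk} = \p_i\Gamma^l_{jk} - \p_j\Gamma^l_{ik} + \Gamma\ast\Gamma$ once and restricting to $\varphi_{\text{I}}$ (where the $\Gamma\ast\Gamma$ contributions drop out by \eqref{christoff}); the paper instead applies $\p_p$ to its inner-product identity \eqref{identity} to arrive at the equivalent $\p^2_{pm}\Gamma^i_{11} = \p_p R_{m11i} + \p^2_{p1}\Gamma^i_{m1}$. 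Both reduce to the same ingredient, $\p R\big|_{\varphi_{\text{I}}} = 0$ via \eqref{curvature} and \eqref{metricder}, but your route is more elementary and exposes the index-swap symmetry in full generality. For the first and third identities, your observation that $\p_1$ commutes with restriction to $\{\vec x = 0\}$ lets you differentiate the already-established Lemma \ref{christoffder2} (resp. the second identity) directly, so the quantity you are differentiating is constant (resp. identically zero) along $\varphi_{\text{I}}$ before you even apply $\p_1$. The paper takes $\p_1$ of \eqref{identity} and must separately kill the resulting $\p^2_{11}\Gamma^i_{mk}$ term via \eqref{weaknull}; your shortcut absorbs that step. Both proofs are sound.
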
 
\begin{proof}
Denoting the coordinate vectorfields $\p_{x^i}$ as $X_i$, we have already seen in Lemma \ref{christoffder2} that
\begin{align}
\p_m \lrangle{\nabla_{X_k}X_1,X_i}_g  & = R_{m1ki} + \p_1 \lrangle{\nabla_{X_m}X_k,X_i}_g  - \lrangle{\nabla_{X_m}X_k, \nabla_{X_1}X_i}_g \label{identity} \\
& + \lrangle{\nabla_{X_k}X_1,\nabla_{X_m}X_i}_g\notag
\end{align}
for any $k,i \in \{1,\dots,n\}$. Taking $\p_1$ of both sides shows
\begin{align*}
\p^2_{1m} \lrangle{\nabla_{X_k}X_1,X_i}_g  & = \p_1R_{m1ki} + \p^2_{11} \lrangle{\nabla_{X_m}X_k,X_i}_g - \lrangle{\nabla_{X_1}\nabla_{X_m}X_k, \nabla_{X_1}X_i}_g  \\
& - \lrangle{\nabla_{X_m}X_k, \nabla_{X_1}\nabla_{X_1}X_i}_g + \lrangle{\nabla_{X_1}\nabla_{X_k}X_1,\nabla_{X_m}X_i}_g +  \lrangle{\nabla_{X_k}X_1,\nabla_{X_1}\nabla_{X_m}X_i}_g.
\end{align*}
Restricting this identity on the geodesic proves 
\[ \p^2_{1m} \Gamma_{k 1}^i = \p_1 R_{m1ki} + \p^2_{11}\Gamma_{mk}^i\] 
using Lemma \ref{geotools}. The second term vanishes because of \eqref{weaknull}. The first term vanishes using the spaceform restriction \eqref{curvature} and \eqref{metricder}, proving $\p^2_{1\bullet}\Gamma_{\bullet 1}^\bullet = 0$.

We can instead differentiate \eqref{identity} by $\p_p$ and setting $k = 1$ to deduce
\begin{align*}
\p^2_{pm} \lrangle{\nabla_{X_1}X_1,X_i}_g  & = \p_pR_{m11i} + \p^2_{p1} \lrangle{\nabla_{X_m}X_1,X_i}_g - \lrangle{\nabla_{X_p}\nabla_{X_m}X_1, \nabla_{X_1}X_i}_g  \\
& - \lrangle{\nabla_{X_m}X_1, \nabla_{X_p}\nabla_{X_1}X_i}_g + \lrangle{\nabla_{X_p}\nabla_{X_1}X_1,\nabla_{X_m}X_i}_g +  \lrangle{\nabla_{X_1}X_1,\nabla_{X_p}\nabla_{X_m}X_i}_g.
\end{align*}
Restricting to the geodesic and using Lemma \ref{geotools} similarly proves
\[ \p^2_{pm} \Gamma_{11}^i = \p_p R_{m11i} + \p^2_{p1}\Gamma_{m1}^i.\]
Again the curvature term vanishes using the spaceform restriction \eqref{curvature} and \eqref{metricder}, while the second term vanishes using the already proved $\p^2_{1\bullet}\Gamma_{\bullet 1}^\bullet = 0$ and that regular partial derivatives commute. 

We have show that, for \emph{arbitrary} $\ell$, $\p^2_{\bullet\bullet}\Gamma_{11}^{\bullet}(\ell,\vec 0) = 0$. Arguing as in \eqref{weaknull}, this proves $\p^3_{1\bullet\bullet}\Gamma_{11}^\bullet(\ell,\vec 0) = 0$, as desired.

\end{proof}
This lemma has important ramifications. Firstly, it shows that the only quadratic terms in \eqref{waveu1} and \eqref{KGui} are of the form $\vec{u}\cdot \M(\D \vec{u}, \D \ell)$ and $\vec{u}\cdot \M(\D u^1,\D \ell)$. Secondly, it shows that the \emph{undifferentiated} wave factor $u^1$ is \emph{missing} from the nonlinearities. This null-structure allows our argument to run because the missing resonant terms such as $(u^1)^2$ or $(u^1)^3$ could potentially blow up in finite time due to the lack of the availability of the Morawetz multiplier.

Lemmas \ref{geotools} and \ref{lem:weaknull2}, and Proposition \ref{decoupledeqs} immediately imply that the perturbation equations simplify to 
\begin{multline} \label{waveu1SF}
\Box u^1  = - 2\kappa \sum_{m=2}^n u^m\M(\D u^m,\D \ell)  - 2 \sum_{m,p = 2}^n \p_{mp}^2 \Gamma_{j1}^1(\ell,\vec 0) u^m u^p \cdot \M(\D u^j, \D \ell) \\
 - \sum_{m=2}^n  \p_m\Gamma_{jk}^1(\ell,\vec{0}) u^m \cdot \M(\D u^j,\D u^k)  - \sum_{m,p,q=2}^n \p_{mpq}^3 \Gamma_{11}^1(\ell,\vec{0}) u^m u^p  u^q\cdot \sigma + \text{h.o.t}. ,
\end{multline}
\begin{multline}\label{KGuiSF}
\Box u^i - \kappa u^i \cdot\sigma  = 2\kappa  u^i \cdot \M(\D u^1,\D \ell)  - 2 \sum_{m,p = 2}^n \p_{mp}^2 \Gamma_{j1}^i(\ell,\vec 0) u^m u^p \cdot \M(\D u^j, \D \ell)   \\
-  \sum_{m=2}^n \p_m\Gamma_{jk}^i(\ell,\vec{0}) u^m \cdot \M(\D u^j,\D u^k)  -  \sum_{m,p,q=2}^n\p_{mpq}^3 \Gamma_{11}^i(\ell,\vec{0}) u^m u^p  u^q\cdot \sigma+  \text{h.o.t}.
\end{multline}
\subsubsection{Negatively curved case}
Without loss of generality, in the case of negative sectional curvature we assume $\kappa \equiv -1$. Consequently we demand that the line $\ell$ be timelike ($\sigma < 0$) in order to make the masses of the Klein-Gordon solutions $\vec{u}$ positive. Without loss of  generality, up to a change of coordinates, $\ell \equiv t$. Equations of motions \eqref{waveu1SF} and \eqref{KGuiSF} then reduce to 

\begin{multline} \label{waveu1SFver2}
\Box u^1  = - 2 \sum_{m=2}^n u^m \cdot u_t^m   - 2 \sum_{m,p = 2}^n \p_{mp}^2 \Gamma_{j1}^1(\ell,\vec 0) u^m u^p \cdot \M(\D u^j, \D \ell)\\
 - \sum_{m=2}^n  \p_m\Gamma_{jk}^1(\ell,\vec{0}) u^m \cdot \M(\D u^j,\D u^k)  + \sum_{m,p,q=2}^n \p_{mpq}^3 \Gamma_{11}^1(\ell,\vec{0}) u^m u^p  u^q + \text{h.o.t}. ,
\end{multline}
\begin{multline}\label{KGuiSFver2}
\Box u^i -  u^i =  2  u^i \cdot u^1_t - 2 \sum_{m,p = 2}^n \p_{mp}^2 \Gamma_{j1}^i(\ell,\vec 0) u^m u^p \cdot \M(\D u^j, \D \ell)  \\
-  \sum_{m=2}^n \p_m\Gamma_{jk}^i(\ell,\vec{0}) u^m \cdot \M(\D u^j,\D u^k)  +  \sum_{m,p,q=2}^n\p_{mpq}^3 \Gamma_{11}^i(\ell,\vec{0}) u^m u^p  u^q+  \text{h.o.t}.
\end{multline}

\subsubsection{Positively curved case} \label{subsec:pos}
In the case of positive sectional curvature, we assume $\kappa \equiv +1$ and hence, without loss of generality, we can prescribe $\ell\equiv x^1$. This reduces  \eqref{waveu1SF} and \eqref{KGuiSF} to
\begin{multline} \label{waveu1SFver3}
\Box u^1  = 2 \sum_{m=2}^n u^m \cdot u_{x^1}^m  - 2 \sum_{m,p = 2}^n \p_{mp}^2 \Gamma_{j1}^1(\ell,\vec 0) u^m u^p \cdot \M(\D u^j, \D \ell)\\
 - \sum_{m=2}^n  \p_m\Gamma_{jk}^1(\ell,\vec{0}) u^m \cdot \M(\D u^j,\D u^k)  - \sum_{m,p,q=2}^n \p_{mpq}^3 \Gamma_{11}^1(\ell,\vec{0}) u^m u^p  u^q + \text{h.o.t}. ,
\end{multline}
\begin{multline}\label{KGuiSFver3}
\Box u^i -  u^i =  - 2  u^i \cdot u^1_{x^1}   - 2 \sum_{m,p = 2}^n \p_{mp}^2 \Gamma_{j1}^i(\ell,\vec 0) u^m u^p \cdot \M(\D u^j, \D \ell) \\
-  \sum_{m=2}^n \p_m\Gamma_{jk}^i(\ell,\vec{0}) u^m \cdot \M(\D u^j,\D u^k)  -  \sum_{m,p,q=2}^n\p_{mpq}^3 \Gamma_{11}^i(\ell,\vec{0}) u^m u^p  u^q+  \text{h.o.t}.
\end{multline}
\begin{rmk}
Starting now and for the remainder of the paper we restrict ourselves to the most difficult case of spatial dimension $d = 3$. This is a borderline case in the sense that the linear decay rate for waves $t^{-1}$ barely misses to be integrable. We will overcome this growth by exploiting the weak null-condition present in \eqref{waveu1SFver2} -- \eqref{KGuiSFver3}, namely that resonant wave--wave nonlinearities are \emph{not present}. Instead, we see that the strongest nonlinear interactions are of wave--Klein-Gordon type. Our estimates will close by exploiting the \emph{stronger} linear decay rate of $t^{-3/2}$ for the Klein-Gordon equation. 

Using the integrable decay rate of $t^{(1-d)/2}$ when $d \ge 4$ for linear waves, it is straight forward to show that our results hold for higher dimensions as well. 
\end{rmk}

\section{Basic analytical tools}\label{section:tools}

We will approach the analysis of the system \eqref{waveu1SFver2}--\eqref{KGuiSFver2} using a variant of the vectorfield method adapted to the hyperboloidal foliations. In particular, we will make use of both the Morrey-type global Sobolev inequality developed by Wong in \cite{Wong2017}, which is a refinement of the one in \cite{LeFloch}, and the interpolated GNS-type counterparts developed by the first author and Wong in \cite{GNAWS}. These inequalities and the robust energy method allow us to prove our estimates using only the $T$ multiplier as well as using only the Lorentz boosts as commutator fields.

In this section we develop the notation required for this vectorfield method approach, as well as record the results from \cite{Wong2017, GNAWS} that are needed. We denote by $\Sigma_\tau \subset \R^{1+3}$ the hyperboloid 
\[ \Sigma_\tau \eqdef \{ (t,x) \in \R^{1+3} \ | \ t^2 - |x|^2 = \tau^2, t > 0\}.\] 
It can be parametrized by $\R^3$ via the usual map
\[ (x^1,x^2,x^3) \mapsto ( \sqrt{\tau^2 + |x|^2}, x^1,x^2,x^3) \in \Sigma_\tau \subset \R^{1+3}.\] 
We also denote by 
\[ w_\tau(x) \eqdef \sqrt{\tau^2 + |x|^2}, \qquad x \in \R^3.\]
Of course, $w_\tau \equiv t$ when thinking of $\Sigma_\tau$ as an embedded submanifold of Minkowski space; we use the notation $w_\tau$ because it is useful to work intrinsically on $\Sigma_\tau$. A direct computation shows that the induced volume form on $\Sigma_\tau$ from the Minkowski metric $\M$ on $\R^{1+3}$ is given by
\[ \dvol_{\Sigma_\tau} = \frac{\tau}{w_\tau} \ \D x^1 \wedge \D x^2 \wedge \D x^3.\] 
The commutator fields we will be using are the Lorentz boosts
\[L^i \eqdef t \p_{x^i} + x^i \p_t,\qquad i = 1,2,3\] 
which are Killing vectorfields of Minkowski space. These vectorfields are \emph{tangent} to the hypersurfaces $\Sigma_\tau$ and also span every fibre of the tangent bundle $T\Sigma_\tau$. Since $L^i w_\tau = x^i, \ L^ix^i = w_\tau$, we have that for any string of derivatives
\begin{equation}\label{derweights}
|L^{i_1}\cdots L^{i_k}w_\tau| \le w_\tau, \qquad |L^{i_1}\cdots L^{i_k} \tfrac{x^i}{w_\tau}| \lesssim 1.
\end{equation}
If $\alpha$ is an $m$-tuple with elements drawn from $\{1,2,3\}$ (namely that $\alpha = (\alpha_1,\dots,\alpha_m)$ with $\alpha_i \in \{1,2,3\}$) we denote
\[ L^\alpha u \eqdef L^{\alpha_m}L^{\alpha_{m-1}}\cdots L^{\alpha_1} u.\]
By $|\alpha|$ we refer to its length, namely $m$. 

We define the weighted Lebesgue and Sobolev norms as 
\begin{itemize} 
	\item For $p\in [1,\infty)$ and $\alpha\in\R$, by $\Lebw^p_{\alpha}$ we refer to 
		\[ \|u\|_{\Lebw^p_\alpha} \eqdef \left( \int w_\tau^\alpha |u|^p \ \dvol_{\Sigma_\tau} \right)^{1/p}. \]
	\item For $p\in [1,\infty)$, $\alpha\in\R$, and $k\in \mathbb{N}$, by $\mr\Sobw^{k,p}_\alpha$ we refer to 
		\[ \|u\|_{\mr\Sobw^{k,p}_\alpha} \eqdef \sum_{|\beta| = k} \|L^\beta u\|_{\Lebw^p_\alpha}.\]
		The corresponding inhomogeneous version $\Sobw^{k,p}_\alpha$ is 
		\[ \|u\|_{\Sobw^{k,p}_\alpha} \eqdef \sum_{j = 0}^k \|u\|_{\mr\Sobw^{j,p}_\alpha}.\]
\end{itemize}

The main results we need are:
\begin{thm}\emph{(\cite[Theorem 2.18]{Wong2017})}.
Let $l \in \R$ be fixed. For any function $u$ defined on $\Sigma_\tau$, the following uniform estimate holds:
\begin{equation} \label{Morrey-Sobolev}
\tau^{1/2} \| u \cdot w_\tau^{(2+ l )/2}\|_{L^\infty(\Sigma_\tau)} \lesssim  \| u\|_{\Sobw_l^{2,2}}.
\end{equation}
\end{thm}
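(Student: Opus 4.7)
The plan is to reduce \eqref{Morrey-Sobolev} to a weighted and rescaled version of the standard $H^2(\R^3)\hookrightarrow L^\infty(\R^3)$ Sobolev embedding, applied on a Euclidean ball whose radius is tuned to the weight $w_\tau$. Parametrize $\Sigma_\tau$ by $x\in \R^3$ via $(t,x)=(w_\tau(x),x)$. A direct chain-rule computation (differentiating $f(x) = F(w_\tau(x),x)$) shows that, as differential operators on functions on $\Sigma_\tau$, the Lorentz boosts satisfy
\[ L^i u = w_\tau \, \p_{x^i} u, \qquad L^j L^i u = x^j \p_{x^i} u + w_\tau^2 \, \p^2_{x^i x^j} u.\]
The weighted Sobolev norm takes the explicit form
\[\|u\|_{\Sobw^{2,2}_l}^2 = \sum_{|\beta|\le 2} \int_{\R^3} w_\tau^l \,|L^\beta u|^2 \,\tfrac{\tau}{w_\tau}\,dx,\]
and, after squaring, \eqref{Morrey-Sobolev} is equivalent to the pointwise estimate $\tau\, w_\tau(x_0)^{l+2}\,|u(x_0)|^2 \lesssim \|u\|_{\Sobw^{2,2}_l}^2$, to be shown for arbitrary $x_0 \in \R^3$.

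Next I would fix $x_0$ and set $r := c\, w_\tau(x_0)$ for an absolute constant $c>0$ to be chosen small. Since $w_\tau(x)^2 - w_\tau(x_0)^2 = |x|^2 - |x_0|^2 = (|x|-|x_0|)(|x|+|x_0|)$ and $|x_0| \le w_\tau(x_0)$, a short computation shows $w_\tau(x) \sim w_\tau(x_0)$ uniformly on the Euclidean ball $B(x_0,r)\subset \R^3$ for $c$ small. The standard Sobolev embedding on $B(x_0,r)$, rescaled to the unit ball via $y = (x-x_0)/r$, yields
\[ |u(x_0)|^2 \,\lesssim\, r^{-3}\sum_{k=0}^{2} r^{2k} \int_{B(x_0,r)} |\nabla^k u|^2 \,dx.\]

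The final step is to convert Euclidean derivatives to boost derivatives. From $L^i = w_\tau\,\p_{x^i}$ I immediately get $w_\tau|\nabla u| = |Lu|$, while the formula for $L^j L^i$ combined with the geometric bound $|x|\le w_\tau$ yields $w_\tau^2|\nabla^2 u|\lesssim |L^2 u| + |x|w_\tau^{-1}|L u|\lesssim |L^2u|+|Lu|$. Because $w_\tau \sim r$ on $B(x_0,r)$, these imply $r^{2k}|\nabla^k u|^2 \lesssim \sum_{|\beta|\le k}|L^\beta u|^2$ for $k=0,1,2$. Substituting and multiplying through by $\tau\,w_\tau(x_0)^{l+2}$, then absorbing the external factor $\tau\,w_\tau(x_0)^{l-1}$ into the integrand as $w_\tau^l\cdot\tfrac{\tau}{w_\tau}$ (justified because $w_\tau\sim w_\tau(x_0)$ on $B(x_0,r)$), produces exactly the bound $\tau\, w_\tau(x_0)^{l+2}|u(x_0)|^2\lesssim \|u\|_{\Sobw^{2,2}_l}^2$.

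The only mild technical obstacle is the lower-order term $x^j\p_{x^i}u$ that appears when $L^jL^i$ is expanded in ordinary second derivatives; the estimate $|x|\le w_\tau$ is exactly the input needed to absorb it. I want to emphasize that no Lorentz-transport or partition-of-unity argument is required, since the identity $L^i=w_\tau\,\p_{x^i}$ holds globally on the $\R^3$-parametrization of $\Sigma_\tau$, so the implicit constants in the rescaled Sobolev embedding are uniform in $x_0$.
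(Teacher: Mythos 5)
Your proof is correct, and all the individual steps check out: the exact identity $L^i u = w_\tau\,\p_{x^i}u$ on the graph parametrization of $\Sigma_\tau$, the two-sided comparability $w_\tau(x)\sim w_\tau(x_0)$ on $B(x_0,c\,w_\tau(x_0))$ for $c$ small, the conversion $w_\tau^2\p^2_{ij}u = L^jL^iu - \tfrac{x^j}{w_\tau}L^iu$ with the lower-order piece absorbed via $|x|\le w_\tau$, and the final bookkeeping $\tau\,w_\tau(x_0)^{l-1}\sim w_\tau^{l}\cdot\tfrac{\tau}{w_\tau}$ on the ball, which makes $\tau$ drop out of the constant. Note that the paper at hand does not prove this inequality but quotes it from \cite{Wong2017}; your rescaled-local-Sobolev argument on balls of radius $\sim w_\tau(x_0)$ is a valid self-contained replacement for that citation and is the same essential mechanism behind the cited result.
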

\begin{prop}\emph{(\cite[Propositions 3.1, 3.7]{GNAWS})}. 
For any function $u$ defined on $\Sigma_\tau$, and for all $r \in [2,6]$, the following estimates hold:
\begin{align}
\tau^{1/2 - 1/r}\| u\|_{\mr \Sobw_{r/2 - 2}^{k,r}} & \lesssim \| u\|_{\mr \Sobw_{-1}^{k,2}}^{\frac{6-r}{2r}} \cdot \| u\|_{\mr \Sobw_{-1}^{k+1,2}}^{\frac{3r-6}{2r}} \label{pqrwave} \\
\tau^{1/2 - 1/r}\| u\|_{\mr \Sobw_{1}^{k,r}} &  \lesssim \| u\|_{\mr \Sobw_{1}^{k,2}}^{\frac{6-r}{2r}} \cdot \| u\|_{\mr \Sobw_{-1}^{k+1,2}}^{\frac{3r-6}{2r}} \label{pqrKG}
\end{align}
\end{prop}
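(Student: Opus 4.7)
The plan is to reduce both inequalities to analysis on $\R^3$ and then interpolate between the two endpoints $r=2$ and $r=6$. Parametrize $\Sigma_\tau$ by $(x^1,x^2,x^3)\mapsto(w_\tau,x^1,x^2,x^3)$ and write $\tilde v = v|_{\Sigma_\tau}$ as a function of $x \in \R^3$. A direct computation gives the intrinsic identity $L^i v|_{\Sigma_\tau} = w_\tau \,\partial_{x^i} \tilde v$, while the formula for the induced volume form yields $\|v\|_{\Lebw^p_\alpha} = \tau^{1/p}\, \|w_\tau^{(\alpha-1)/p} \tilde v\|_{L^p(\R^3,\,\D x)}$. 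Together, these convert every weighted $\Lebw^p_\alpha$ norm into a standard $L^p(\R^3)$ norm of a suitably reweighted function, at which point only classical inequalities are needed.

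I would next establish the $r = 6$ endpoint, which is common to both estimates since $r/2 - 2 = 1$ there. After applying the two conversion formulas, the target
\[ \tau^{1/3} \|v\|_{\Lebw^6_1} \lesssim \sum_i \|L^i v\|_{\Lebw^2_{-1}} \]
reduces to the classical Sobolev embedding $\|\tilde v\|_{L^6(\R^3)} \lesssim \|\nabla \tilde v\|_{L^2(\R^3)}$, because the $w_\tau$-weights cancel exactly against the $L^i = w_\tau \partial_{x^i}$ relation, and both sides pick up the same $\tau^{1/2}$ prefactor. The endpoint $r = 2$ is tautological on both sides of each estimate.

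For interior $r \in (2,6)$, set $\theta = (3r-6)/(2r)$ so that $1/r = (1-\theta)/2 + \theta/6$. Applying H\"older's inequality with conjugate exponents $2/((1-\theta)r)$ and $6/(\theta r)$ to the integrand $(w_\tau^{\alpha_1/r_1}|v|)^{(1-\theta)r} \cdot (w_\tau^{\alpha_2/r_2}|v|)^{\theta r}$ yields the weighted interpolation
\[ \|v\|_{\Lebw^r_\alpha} \le \|v\|_{\Lebw^{r_1}_{\alpha_1}}^{1-\theta} \|v\|_{\Lebw^{r_2}_{\alpha_2}}^{\theta}, \]
valid whenever $(1/r,\,\alpha/r)$ lies on the line segment from $(1/r_1,\,\alpha_1/r_1)$ to $(1/r_2,\,\alpha_2/r_2)$. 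For \eqref{pqrwave} I would choose $(r_1,\alpha_1)=(2,-1)$, $(r_2,\alpha_2)=(6,1)$, which places $\alpha=r/2-2$ on that segment; for \eqref{pqrKG} I would take $(2,1)$ and $(6,1)$, placing $\alpha=1$ on the now horizontal segment. Substituting the $r = 6$ endpoint estimate on the right and using the algebraic identity $\tau^{1/2-1/r} = \tau^{\theta/3}$ to absorb the $\tau$-weight into the $\Lebw^6_1$ factor gives the desired pointwise estimate for $v = L^\beta u$. A final sum over $|\beta|=k$ via the discrete H\"older inequality $\sum_i a_i^{1-\theta}b_i^\theta\le(\sum_i a_i)^{1-\theta}(\sum_i b_i)^\theta$ assembles the pointwise products of norms into the homogeneous Sobolev norms on the right-hand side.

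The principal obstacle is purely bookkeeping: one must verify that the interpolation segment $(1/r,\,\alpha/r)$ actually passes through the target weight in each case, and that the resulting $\theta$ matches the exponents $(3r-6)/(2r)$ and $(6-r)/(2r)$ appearing in the statement. This alignment is precisely what forces the first factor on the right-hand side of \eqref{pqrwave} to carry weight $-1$ while that of \eqref{pqrKG} carries weight $+1$, despite both inequalities sharing the same Sobolev endpoint at $r=6$.
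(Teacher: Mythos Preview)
The paper does not supply its own proof of this proposition; it is quoted verbatim from \cite[Propositions 3.1, 3.7]{GNAWS} and used as a black box. Your argument is correct and is essentially the approach taken in that reference: pull back to $\R^3$ via the graph parametrization so that $L^i$ becomes $w_\tau\partial_{x^i}$ and the weighted $\Lebw^p_\alpha$ norms become flat $L^p$ norms with explicit $w_\tau$-weights, identify the $r=6$ endpoint with the classical $\dot H^1 \hookrightarrow L^6$ embedding, and then interpolate by H\"older along the appropriate segment in the $(1/r,\alpha/r)$ plane. The bookkeeping you flag---that the segment through $(1/2,-1/2)$ and $(1/6,1/6)$ hits $\alpha=r/2-2$ while the segment through $(1/2,1/2)$ and $(1/6,1/6)$ hits $\alpha=1$, both with the same $\theta=(3r-6)/(2r)$---is exactly the content that distinguishes the two inequalities, and your verification of it is sound.
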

\begin{lem}\emph{(\cite[Lemma 5.1]{Wong2017})}.
Let $d \ge 3$. For any function $u$ defined on $\Sigma_\tau$,
\begin{align}
\| u \|_{\Lebw_{-1}^2} \le \frac{2}{d-2} \| u\|_{\mr \Sobw_{-1}^{1,2}}. \label{Hardy}
\end{align}
\end{lem}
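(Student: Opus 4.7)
The plan is to reduce the inequality on the hyperboloid $\Sigma_\tau$ to a weighted Hardy-type inequality on Euclidean $\R^d$. First I parametrize $\Sigma_\tau$ by $x \in \R^d$ via $x \mapsto (w_\tau(x), x)$; since the boost $L^i$ is tangent to $\Sigma_\tau$ and $t|_{\Sigma_\tau} = w_\tau$, choosing the extension of $u$ that is constant in $t$ shows that the intrinsic action on $\Sigma_\tau$ is $L^i u = w_\tau \p_{x^i} u$. Combined with $\dvol_{\Sigma_\tau} = (\tau/w_\tau)\,\D x$, the two quantities to be compared take the form
\begin{equation*}
\|u\|_{\Lebw^2_{-1}}^2 = \tau \int_{\R^d} \frac{|u|^2}{w_\tau^2}\, \D x, \qquad \|L^i u\|_{\Lebw^2_{-1}}^2 = \tau \int_{\R^d} |\p_{x^i} u|^2\, \D x,
\end{equation*}
so the lemma reduces to the Euclidean bound $\int |u|^2 w_\tau^{-2}\, \D x \le \left(\frac{2}{d-2}\right)^2 \int |\nabla u|^2\, \D x$ on $\R^d$.

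Next I would prove this weighted Hardy inequality by the classical multiplier argument with the vector field $x/w_\tau^2$. A direct computation using $\nabla w_\tau^2 = 2x$ yields the pointwise identity
\begin{equation*}
\nabla \cdot \left( \frac{x}{w_\tau^2}\right) = \frac{d-2}{w_\tau^2} + \frac{2\tau^2}{w_\tau^4} \ge \frac{d-2}{w_\tau^2}.
\end{equation*}
Testing this against $|u|^2$ and integrating by parts (justified by a standard cutoff approximation since $u \in \mr\Sobw^{1,2}_{-1}$) produces
\begin{equation*}
(d-2) \int \frac{|u|^2}{w_\tau^2}\, \D x \le -2 \int u\, \nabla u \cdot \frac{x}{w_\tau^2}\, \D x.
\end{equation*}
Applying Cauchy--Schwarz on the right and using the pointwise bound $|x|/w_\tau \le 1$ to discard the weight on $\nabla u$, then dividing through by the square root of $\int |u|^2/w_\tau^2\, \D x$, gives the desired constant $2/(d-2)$.

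Translating back to the hyperboloid, multiplying by $\tau$ converts the Euclidean bound into $\|u\|_{\Lebw^2_{-1}}^2 \le \left(\frac{2}{d-2}\right)^2 \sum_i \|L^i u\|_{\Lebw^2_{-1}}^2$, and the elementary estimate $\bigl(\sum_i a_i^2\bigr)^{1/2} \le \sum_i |a_i|$ supplies the $\ell^1$ sum defining $\|\cdot\|_{\mr\Sobw^{1,2}_{-1}}$, closing the argument. The only point requiring genuine care is the role of the dimension: the hypothesis $d \ge 3$ is exactly what makes the $(d-2)$ coefficient in the divergence identity positive, mirroring the well-known failure of Hardy's inequality in two dimensions. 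Beyond that, the truncation argument justifying vanishing boundary terms at infinity in the integration by parts is routine.
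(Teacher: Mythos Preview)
The paper does not give its own proof of this lemma; it is quoted without argument from \cite[Lemma~5.1]{Wong2017}. Your proposal is correct and is essentially the standard argument one would expect in that reference: pull back to $\R^d$ via the graph parametrization so that $L^i$ becomes $w_\tau\p_{x^i}$ and $\dvol_{\Sigma_\tau}=(\tau/w_\tau)\,\D x$, reduce to the Euclidean weighted Hardy inequality with weight $w_\tau^{-2}$, and prove the latter by the divergence/multiplier identity for $x/w_\tau^2$. The computation $\nabla\!\cdot(x/w_\tau^2)=(d-2)w_\tau^{-2}+2\tau^2 w_\tau^{-4}\ge(d-2)w_\tau^{-2}$, the Cauchy--Schwarz step, and the final passage from the $\ell^2$-sum to the $\ell^1$-sum defining $\mr\Sobw^{1,2}_{-1}$ are all handled correctly, and your remark about the failure for $d=2$ is apt.
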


We will denote by 
\begin{align}
 \mathcal{E}_\tau^W[u^1]  &  \eqdef \left(  \int_{\Sigma_\tau}\left( \tau^{-1}\sum_{j=1}^3 |L^ju^1|^2 + \tau (\p_tu^1)^2\right)w_\tau^{-1} \ \dvol_{\Sigma_\tau} \right)^{1/2} \label{Wenegy}, \\
 \mathcal{E}_\tau^{KG}[u^i] &  \eqdef  \left(  \int_{\Sigma_\tau}\left( \tau^{-1}\sum_{j=1}^3 |L^ju^i|^2 + \tau (\p_tu^i)^2 \right)w_\tau^{-1} +  \tau^{-1} |u^i|^2 w_\tau \ \dvol_{\Sigma_\tau} \right)^{1/2 }\label{KGenergy}
 \end{align} 
as the energies of $(u^1,\vec{u}\hspace{.05cm})$.
\begin{rmk}
The Klein-Gordon energy is adapted to Klein-Gordon solutions with mass 1, see subsubsection \ref{subsec:pos}.
\end{rmk}
In the weighted Sobolev notation,
\begin{align*}
 \mathcal{E}_\tau^W[u^1]  & \approx \tau^{-1/2} \|u\|_{\mr\Sobw^{1,2}_{-1}} + \tau^{1/2} \|\partial_t u\|_{\Lebw^2_{-1}}, \\
 \mathcal{E}_\tau^{KG}[u^i] & \approx  \tau^{-1/2} \|u^i\|_{\mr\Sobw^{1,2}_{-1}} + \tau^{1/2} \|\partial_t u^i\|_{\Lebw^2_{-1}} + \tau^{-1/2} \|u^i\|_{\Lebw^2_1} .
 \end{align*}

  These energies satisfy the fundamental energy identity
 \begin{multline} \label{fundamental}
 \mathcal{E}_{\tau_1}^W[u^1]^2 + \sum_{i=1}^n \mathcal{E}_{\tau_1}^{KG}[u^i]^2 \lesssim  \mathcal{E}_{\tau_0}^W[u^1]^2 + \sum_{i=1}^n \mathcal{E}_{\tau_0}^{KG}[u^i]^2 \\
 + \int_{\tau_0}^{\tau_1} \int_{\Sigma_\tau} \Box_\M u^1\p_t u^1 + \langle\Box_\M \vec{u}- \vec{u} , \p_t \vec{u} \hspace{.05cm} \rangle \ \dvol_{\Sigma_\tau} \D \tau,
 \end{multline}
where we wrote
\[\langle \vec \phi, \vec \psi \rangle \eqdef \sum_{i=2}^n \phi^i \psi^i.\]
We use this notation through the rest of this manuscript. We also schematically write
 \[ \mathcal{E}_\tau[\bm u] \eqdef \sqrt{ \mathcal{E}_\tau^W[u^1]^2 + \sum_{i=1}^n \mathcal{E}^{KG}_\tau[u^i]^2}.\] With this the fundamental estimate simplifies to
 \begin{equation}\label{SchematicEnEst}
 \mathcal{E}_{\tau_1}[\bm u]^2 - \mathcal{E}_{\tau_0}[\bm u]^2 \lesssim   \int_{\tau_0}^{\tau_1} \int_{\Sigma_\tau} \Box_\M u^1\p_t u^1 + \langle\Box_\M \vec{u}- \vec{u} , \p_t \vec{u} \hspace{.05cm} \rangle \ \dvol_{\Sigma_\tau} \D \tau.
 \end{equation}

In order to apply \eqref{fundamental} to attain higher derivative estimates of $\bm u$, we commute the system \eqref{waveu1SFver2} -- \eqref{KGuiSFver2} with the Lorentz boosts. It is useful to introduce a notation for higher order energies in order to close the bootstrap assumption in a systematic way. We define
\begin{equation} \label{totalenergy}
\fk{E}_k(\tau) \eqdef \tau^{-1/2} \| \bm u\|_{\Sobw^{k+1,2}_{-1}} + \tau^{1/2} \| \p_t \bm u\|_{\Sobw^{k,2}_{-1}} + \tau^{-1/2} \| \vec{u} \hspace{.05cm} \|_{\Sobw^{k,2}_1}.
\end{equation}
Recall that $\bm u = (u^1,\vec{u}\hspace{.05cm})$ and so $\fk{E}_k(\tau)$ is effectively the total $k$-th order energy of the system.  Indeed, using the commutator algebra properties (specifically those of $[L^i,\partial_t]$) described in \cite[Section 3.2]{AbbWong2019} and the Hardy inequality \eqref{Hardy}, we see that
\[ \fk{E}_k(\tau) \approx \sum_{|\alpha| \le k} \mathcal{E}_\tau[L^\alpha \bm u].\]

\begin{rmk} Hardy's inequality is not available in dimension 2 and so the correct analogue for $\fk{E}_k(\tau)$ on $\R^{1+2}$ is
\[ \fk{E}_k(\tau) \eqdef \tau^{-1/2} \sum_{j=1}^{k+1} \| \bm u\|_{\mr \Sobw_{-1}^{j,2}} + \tau^{1/2} \| \p_t \bm u\|_{\Sobw^{k,2}_{-1}} + \tau^{-1/2} \| \vec{u} \hspace{.05cm} \|_{\Sobw^{k,2}_1}.\]
Note that the Klein-Gordon terms $\vec{u}$ can go all the way down to $k = 0$ because of the mass terms in \eqref{KGenergy}.
\end{rmk}
The following proposition is an immediate consequence of \eqref{Morrey-Sobolev} -- \eqref{Hardy} and the definition of the energy:
\begin{prop} \label{prop:estimates}
For any $x \in \Sigma_\tau$, the following pointwise estimates hold:
\begin{align*}
|\vec u(x)| & \lesssim w_\tau(x)^{-3/2} \fk{E}_2(\tau), \\
|L^i \bm u(x)| + \tau |\p_t\bm u(x)|  & \lesssim w_\tau(x)^{-1/2} \fk{E}_2(\tau). \\
\end{align*}
The following Sobolev estimates hold for any $r \in [2,6]$:
\begin{align*}
\| \vec u \hspace{.05cm}\|_{\mr \Sobw_1^{k,r}(\Sigma_\tau)} & \lesssim \tau^{1/r}\fk{E}_k(\tau), \\
 \|\bm u\|_{\mr \Sobw_{r/2 - 2}^{k+1,r}(\Sigma_\tau)} + \tau \|\p_t \bm u\|_{\mr \Sobw_{r/2-2}^{k,r}(\Sigma_\tau)} & \lesssim \tau^{1/r}   \mathfrak{E}_k(\tau)^{\frac{6-r}{2r}} \cdot  \mathfrak{E}_{k+1}(\tau)^{\frac{3r-6}{2r}} . \label{pqrGNS-waveder-d=3}
\end{align*}
\end{prop}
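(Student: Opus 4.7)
The plan is to derive both families of estimates by directly inserting the Morrey--Sobolev, GNS, and Hardy inequalities into the energies $\mathfrak{E}_k(\tau)$, so the main task is bookkeeping of weights and exponents. The guiding principle is that in the pointwise bounds the choice of weight $l$ in \eqref{Morrey-Sobolev} is dictated by whether we are estimating a massive component (weight $l = 1$, yielding decay $w_\tau^{-3/2}$) or a wave-type quantity like $L^i \bm u$ or $\tau \p_t \bm u$ (weight $l = -1$, yielding decay $w_\tau^{-1/2}$). For the Lebesgue--Sobolev hierarchy we instead invoke the interpolated inequalities \eqref{pqrwave}--\eqref{pqrKG} and match the exponents on $\tau$ and $\mathfrak{E}_k$ through a straightforward arithmetic check.

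First, for $\vec{u}$, I would apply \eqref{Morrey-Sobolev} with $l = 1$ to obtain $|\vec{u}(x)| \lesssim \tau^{-1/2} w_\tau^{-3/2}\|\vec u\|_{\Sobw_1^{2,2}}$, and then observe that $\tau^{-1/2}\|\vec u\|_{\Sobw_1^{2,2}}$ is exactly one of the three summands defining $\mathfrak{E}_2(\tau)$. For $L^i \bm u$ I would apply \eqref{Morrey-Sobolev} with $l = -1$ to $L^i\bm u$, using Hardy \eqref{Hardy} to control $\|L^i\bm u\|_{\Lebw^2_{-1}}$ by a homogeneous norm; this produces $|L^i\bm u(x)| \lesssim \tau^{-1/2}w_\tau^{-1/2}\|\bm u\|_{\Sobw_{-1}^{3,2}}$, and the definition of $\mathfrak{E}_2$ gives $\|\bm u\|_{\Sobw_{-1}^{3,2}} \lesssim \tau^{1/2}\mathfrak{E}_2(\tau)$. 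An identical argument applied to $\p_t \bm u$ together with $\tau^{1/2}\|\p_t \bm u\|_{\Sobw_{-1}^{2,2}} \lesssim \mathfrak{E}_2(\tau)$ yields $\tau|\p_t\bm u(x)| \lesssim w_\tau^{-1/2}\mathfrak{E}_2(\tau)$.

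For the Lebesgue--Sobolev bounds, the estimate on $\vec u$ follows from applying \eqref{pqrKG} and then bounding the two factors on the right using the energy: $\|\vec u\|_{\mr \Sobw_1^{k,2}} \lesssim \tau^{1/2}\mathfrak{E}_k(\tau)$ and $\|\vec u\|_{\mr \Sobw_{-1}^{k+1,2}} \lesssim \|\bm u\|_{\mr \Sobw_{-1}^{k+1,2}} \lesssim \tau^{1/2}\mathfrak{E}_k(\tau)$. The exponents on $\tau^{1/2}$ add to $\frac{6-r}{2r} + \frac{3r-6}{2r} = 1$, and combined with the prefactor $\tau^{1/r - 1/2}$ from \eqref{pqrKG} one obtains the claimed $\tau^{1/r}$ power. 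Analogously, applying \eqref{pqrwave} to $\bm u$ at order $k+1$ and to $\p_t\bm u$ at order $k$, and then bounding $\|\bm u\|_{\mr \Sobw_{-1}^{k+1,2}}$, $\|\bm u\|_{\mr \Sobw_{-1}^{k+2,2}}$ by $\tau^{1/2}\mathfrak{E}_k$ and $\tau^{1/2}\mathfrak{E}_{k+1}$ respectively — and their time-derivative counterparts by $\tau^{-1/2}\mathfrak{E}_k$ and $\tau^{-1/2}\mathfrak{E}_{k+1}$ — produces the joint bound; the extra factor of $\tau$ in front of $\|\p_t\bm u\|_{\mr \Sobw_{r/2-2}^{k,r}}$ is precisely what is needed to convert the $\tau^{-1/2}$ contributions into the same $\tau^{1/r}$ power obtained for $\bm u$.

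The arguments are truly routine; the only subtlety is ensuring at each step that the weighted norm one is producing via Hardy or interpolation is genuinely controlled by some summand of $\mathfrak{E}_k$ or $\mathfrak{E}_{k+1}$, rather than a term requiring yet another derivative. For instance, the use of Hardy is harmless for derivatives appearing at order $\geq 1$ in the inhomogeneous norm $\Sobw_{-1}^{k,2}$, but one has to be careful not to invoke it on $\vec u$ itself at order zero, where one uses instead the mass-weighted $\Lebw_1^2$ term in the Klein-Gordon energy. Apart from this careful accounting, no new idea beyond the cited inequalities is needed.
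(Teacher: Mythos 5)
Your proof is correct and fills in exactly the routine verification the paper omits (the paper simply declares the proposition an immediate consequence of \eqref{Morrey-Sobolev}--\eqref{Hardy} and the definition of $\fk E_k$). One small remark: your invocation of Hardy to absorb the zero-order $\Lebw^2_{-1}$-norm of $L^i\bm u$ is harmless but unnecessary, since $\|L^i\bm u\|_{\Lebw^2_{-1}}=\|\bm u\|_{\mr\Sobw^{1,2}_{-1}}$ already sits inside the inhomogeneous norm $\|\bm u\|_{\Sobw^{k+1,2}_{-1}}$ appearing in $\fk E_k$.
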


\section{Global stability in the setting of {\textbf{TL}}}\label{section:negcurv}
In this section we use the estimates recorded in the former in order to prove global existence to the following wave--Klein-Gordon system:
\begin{equation} \label{negsystem}
\begin{aligned}
\Box_\M u^1 &= -2 \langle\vec{u}, \p_t\vec{u}  \hspace{.05cm} \rangle + (\vec{u}\hspace{.05cm})^3 + (\vec u \hspace{.05cm})^2 \p_t \bm u +  \vec{u} \cdot \M(\D \bm u, \D \bm u), \\
\Box_\M u^i - u^i & = 2u^i \p_t u^1 + (\vec{u}\hspace{.05cm})^3 +(\vec u \hspace{.05cm})^2 \p_t \bm u + \vec{u}\cdot \M(\D \bm u, \D \bm u) , \quad i = 2,\dots,n
\end{aligned}
\end{equation}
where here $(\vec{u}\hspace{.05cm})^3,\ (\vec u)^2 \p_t \bm u,$ and $\vec{u} \cdot \M (\D \bm u, \D \bm u)$ is an abuse of notation representing a linear combination of terms of the form 
\begin{equation} \label{cubicterms}
\begin{aligned}
u^m u^p u^q, & & m,p,q \in \{2,\dots,n\}, & \\
u^m u^p \p_t u^j, && m,p \in \{2,\dots,n\}, && j \in \{1,\dots,n\}, \\
u^m \M(\D u^j,\D u^k), & & m \in \{2,\dots,n\}, &&  j,k \in \{1,\dots,n\}.
\end{aligned}
\end{equation}
For our convenience, we will prescribe initial data at $t= 2$:
\[ \bm u(2,x) = \bm \phi_0(x),\qquad \p_t \bm u(2,x) = \bm \varphi_0(x).\]

Even though this system is a simplification of \eqref{waveu1SFver2}--\eqref{KGuiSFver2}, it captures all of the analytical difficulties and extending the results to the full equations of motion is merely a matter of bookkeeping. Indeed, as the coefficients of \eqref{cubicterms} in the full system are of the form $\p \Gamma(\ell,0)$, they can be regarded as universal constants as a consequence of Lemma \ref{geotools} coupled with the fact that we consider the manifold $(M,g)$ fixed with constant curvature. Moreover, the higher ordered terms in \eqref{waveu1SFver2}--\eqref{KGuiSFver2} are 
\[\left( |\vec{u}\hspace{.05cm}|^4 + |\vec{u}\hspace{.05cm}|^3|\p_t \bm u| + |\vec{u}\hspace{.05cm}|^2|\M(\D \bm u, \D \bm u)|\right) f(\bm u,\p \bm u),\]
where $f : \R^{n(d+2)} \to \R$ is a smooth function depending on $M$. The standard argument, using the energy method, for either the stability problem or the local existence problem for nonlinear waves, handles the nonlinearities with the general prescription of ``putting the highest order derivative factor in $L^2$ and the remainder in $L^\infty$.'' As the $L^\infty$ estimates we will be using are the pointwise bounds from Proposition \ref{prop:estimates}, we see that higher order nonlinearities lead to \emph{more} available decay, and hence add no difficulties when improving the bootstrap assumptions.

Our main theorem asserts that a geodesic wave map affinely parametrized by a timelike \emph{linear} free wave is stable under small (in an appropriate Sobolev norm) perturbations, and that the perturbed solution stays within a small tubular neighborhood of the background geodesic. 
\begin{thm}\label{maintheorem1}
For any $\gamma < 1/2$, there exists some $\epsilon_0$ (which depends only on $\gamma$) such that whenever $\bm \phi_0, \bm \varphi_0$ are compactly supported in the ball of radius 1 centered at the origin satisfying
\[\|\bm \phi_0\|_{H^3} + \| \bm \varphi_0\|_{H^2} < \epsilon_0,\]
there exists a unique solution $\bm u = (u^1,\vec u \hspace{.05cm})$ to \eqref{negsystem} that exists for all time $t \ge 2$. Furthermore, we have the following uniform estimates:
\begin{align*}
|u^1| + \sum_{i=1}^3 |L^i \bm u \hspace{.05cm}| & \lesssim \tau^{\gamma}t^{-1/2} \\
|\vec u\hspace{.05cm}| &  \lesssim \tau^{\gamma} t^{-3/2} \\
|\p_t \bm u|& \lesssim \tau^{-1 + \gamma} t^{-1/2}.
\end{align*}
\end{thm}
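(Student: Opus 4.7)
The proof proceeds by a continuity/bootstrap argument adapted to the hyperboloidal foliation $\{\Sigma_\tau\}_{\tau \ge \tau_0}$. Compactly supported initial data at $t = 2$ lies inside $\Sigma_{\tau_0}$ for $\tau_0 = \sqrt{3}$, and local well-posedness on hyperboloids furnishes a classical solution on a right-neighborhood of $\tau_0$. The goal is to improve the bootstrap hierarchy
\begin{equation*}
\fk E_0(\tau) + \fk E_1(\tau) \le C\epsilon, \qquad \fk E_2(\tau) \le C\epsilon\tau^{\gamma},
\end{equation*}
where the permissible $\tau^\gamma$ growth at the top level is what allows the regularity hypothesis to be lowered to $H^3\times H^2$ (cf.\ Remark \ref{regularity}).

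Plugging the bootstrap into Proposition \ref{prop:estimates} and combining with the Morrey--Sobolev-plus-Hardy bound $|u^1| \lesssim \fk E_1 w_\tau^{-1/2} \lesssim \epsilon w_\tau^{-1/2}$ coming from \eqref{Morrey-Sobolev}--\eqref{Hardy} yields exactly the decay rates claimed in the theorem:
\begin{equation*}
|u^1| + |L^i\bm u| \lesssim \epsilon\tau^\gamma w_\tau^{-1/2}, \quad |\vec u| \lesssim \epsilon\tau^\gamma w_\tau^{-3/2}, \quad |\p_t \bm u| \lesssim \epsilon\tau^{-1+\gamma}w_\tau^{-1/2}.
\end{equation*}
The interpolated $L^r$ estimates in Proposition \ref{prop:estimates} supply additional control at intermediate derivative levels, which will be indispensable whenever no single factor in a commuted product can be bounded pointwise at the $H^3$ level.

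The analytic heart of the argument consists of commuting \eqref{negsystem} with $L^\alpha$ for $|\alpha|\le 2$ and applying the fundamental energy identity \eqref{SchematicEnEst}. Since the Lorentz boosts are Killing, $[L^\alpha,\Box_\M] = 0$; the commutator with $\p_t$ produces only lower-order corrections (cf.\ \cite[Section 3.2]{AbbWong2019}). The commuted quadratic sources decompose schematically as $\sum_{\alpha_1+\alpha_2 = \alpha}L^{\alpha_1}(\cdot)\cdot L^{\alpha_2}(\cdot)$, and the null structure exposed in Lemma \ref{lem:weaknull2}---the absence of undifferentiated $u^1$ and the presence of at least one Klein--Gordon factor in every quadratic pairing of the wave source---is precisely what makes the argument close. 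For each cross-term I would place the factor of lower Lorentz order in $L^\infty$ via Proposition \ref{prop:estimates} and the other in $\Lebw^2_{-1}$; when $|\alpha_1| = |\alpha_2| = 1$ (possible only at the top order) I would instead use the GNS estimates \eqref{pqrwave}--\eqref{pqrKG} to put both factors in $L^r$ for some $r$ slightly above $2$. The cubic remainders are strictly softer, since one extra $L^\infty$ factor supplies an additional small, decaying multiplicative weight.

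The main obstacle is the top-order Klein--Gordon estimate, where the slow wave decay $|\p_t u^1| \lesssim \epsilon\tau^{-1+\gamma}w_\tau^{-1/2}$ (no Morawetz multiplier being available in this setting) forces a borderline logarithmic loss. The commuted source $u^1\p_t \vec u$ contributes to the right-hand side of the energy identity at a rate of approximately $\epsilon\tau^{-1}\fk E_2(\tau)^2$, producing a Gr\"onwall inequality of the form
\[ \fk E_2(\tau)^2 \lesssim \epsilon^2 + C\epsilon \int_{\tau_0}^\tau s^{-1}\fk E_2(s)^2\,\D s,\]
and hence growth $\fk E_2(\tau) \lesssim \epsilon \tau^{C\epsilon}$. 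Choosing $\epsilon$ small enough that $C\epsilon < \gamma$ strictly improves the top-order bootstrap. Meanwhile the lower-order bootstraps for $\fk E_0, \fk E_1$ close with $\tau$-independent constants because $\gamma < 1/2$ forces $|\vec u| \lesssim \epsilon\tau^{\gamma-3/2}$ to decay strictly faster than the critical rate $t^{-1}$, guaranteeing absolute convergence in $\tau$ of all quadratic source integrals. A standard continuity argument then extends the solution to all $\tau \ge \tau_0$.
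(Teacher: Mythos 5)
Your proposal has the right skeleton---bootstrap over the hyperboloidal foliation with the Morrey--Sobolev pointwise bounds for the low-order factors, GNS interpolation \eqref{pqrwave}--\eqref{pqrKG} when no factor can be placed in $L^\infty$ at the $H^3$ level, a priori estimates along $\Sigma_\tau$ feeding back into \eqref{SchematicEnEst}, and $\gamma<1/2$ as the integrability threshold. This is the same overall route as the paper. There are, however, two substantive discrepancies.

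First, you write that the borderline contribution to the top-order Klein--Gordon estimate comes from ``the commuted source $u^1\,\partial_t \vec u$.'' There is no such term in \eqref{negsystem}: the Klein--Gordon source is $2u^i\,\partial_t u^1$ and the wave source is $-2\langle\vec u,\partial_t\vec u\rangle$. An undifferentiated $u^1$ never appears in any nonlinearity---that is exactly the null structure of Lemma \ref{lem:weaknull2} that you yourself invoke in the previous paragraph, so the identification is internally inconsistent. The actual borderline term the paper isolates is \eqref{borderline}, $|L\vec u\,\partial_t L\bm u\,\partial_t LL\bm u|$, arising from the $1{+}1$ Leibniz split of the quadratic sources at $|\alpha|=2$; this is estimated by $\tau^{-1}\fk E_1 \fk E_2^2$ using the interpolated norms with the specific exponents $r=3$ and $r=6$ (not ``$r$ slightly above $2$,'' where the weights and the interpolation exponents would not balance to give the $\tau^{-1}$ rate). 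If you attempted to track your $u^1\,\partial_t\vec u$ source through the estimates you would be estimating the wrong integrand, even though the magnitude you quote happens to coincide with the correct one because $\fk E_1\lesssim\delta$.

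Second, the paper's Proposition \ref{prop:bootstrap} does not run a Gr\"onwall argument; it integrates the a priori estimates of Corollary \ref{cor-apri-est} directly and shows the bootstrap constant improves from $\delta$ to $\epsilon+C\delta^{3/2}$ at each energy level. Your $\fk E_2\lesssim \epsilon\tau^{C\epsilon}$ Gr\"onwall route is a defensible alternative when the borderline source really is $\epsilon\tau^{-1}\fk E_2^2$, but you would still need to fold in the other contributions in \eqref{apriori2} (for example $\tau^{-3/2}\fk E_2^3$, which on its own would force $\gamma\le 1/2$) rather than treating them as strictly subordinate. You also omit the top-order algebraic cancellation between the wave and Klein--Gordon sources, \eqref{eq:quadcancel} and its higher-order analogue \eqref{quadratic1}, which the paper uses to reduce the commuted quadratic integrand to the schematic form \eqref{quadratic2} with $|\beta|\ne|\alpha|$; this is a convenience that could be replaced by a direct estimate of $\vec u\cdot\partial_t L^\alpha\bm u\cdot\partial_t L^\alpha\bm u$ using the improved $w_\tau^{-3/2}$ decay of $\vec u$, but your proposal does neither.
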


By standard local existence theory we can assume that for sufficiently small initial data, the solution $\bm u$ of \eqref{negsystem} exists up to $\Sigma_2$. The breakdown criterion for wave and Klein-Gordon equations imply that so long as we can show that $|\vec{u} \hspace{.05cm} |, \ |L\bm u|, \ |\p_t \bm u|$ remain bounded on $\Sigma_\tau$ for all $\tau > 2$, we can guarantee global existence of solutions. Proposition \ref{prop:estimates} implies that a sufficient condition for global existence are a priori estimates on the second order energies. The general approach is that of a bootstrap argument:
\begin{enumerate}
	\item We will assume that, up to time $\tau_{\text{max}} > 2$, that the energies $\fk{E}_k(\tau)$ of the solution $\bm u$ and its derivatives $L^\alpha\bm u$ verify certain bounds. 
	\item Using Proposition \ref{prop:estimates}, this gives $L^\infty$ bounds on $\bm u$, and its derivatives of the form $L^\alpha \bm u$ and $\p_t L^\alpha\bm u$. 
	\item We can then estimate the nonlinearity using these $L^\infty$ estimates, which we then feed back into the energy inequality \eqref{SchematicEnEst} to get an \emph{updated} control on $\fk{E}_k(\tau)$ for all $\tau \in [2,\tau_{\text{max}}]$. 
	\item Finally, show for sufficiently small initial data sizes, that the updated control \emph{improves} the original control, whereupon by the method of continuity the original bounds on $\fk{E}_k(\tau)$ must hold for all $\tau \geq 2$, implying the desired global existence. 
\end{enumerate}
Since the Lorentz boosts $L^i$ commute with the d'Alembertian $[L^i,\Box_\M] = 0$, after applying \eqref{SchematicEnEst} to $L^\alpha \bm u = (L^\alpha u^1, L^\alpha\vec{u} \hspace{.05cm})$ we see that we need to estimate the integrals
\begin{equation} \label{nonlinearities}
\int_{\Sigma_\tau} L^\alpha( \Box_\M u^1) \p_t L^\alpha u^1 + \langle L^\alpha(\Box_\M \vec{u} - \vec{u}\hspace{.05cm}), \p_t L^\alpha \vec{u}\hspace{.05cm}\rangle \ \dvol_{\Sigma_\tau}
\end{equation}
for all tuples $\alpha$ with elements drawn from $\{1,2,3\}$ and length $\le 2$. 

From the structure of \eqref{negsystem}, when $|\alpha| = 0$ we see a \emph{complete} cancellation of the quadratic terms in \eqref{nonlinearities}:
\begin{equation}\label{eq:quadcancel}
-2 \langle \vec u, \p_t \vec u \hspace{.05cm}\rangle \p_t u^1 + 2 \langle \p_t u^1 \vec u, \p_t \vec u\hspace{.05cm}\rangle = 0.
\end{equation}
Although this cancellation is unique to the case of $|\alpha| = 0$, for $|\alpha| = 1, 2$ we do see a cancellation of all of the \emph{top order} derivative quadratic terms. We see for any tuple $\alpha$ with elements drawn from $\{1,2,3\}$
\begin{multline}\label{quadratic1}
\left| L^\alpha \big(-2 \langle \vec u,\p_t \vec u\hspace{.05cm}\rangle\big) \p_t L^\alpha u^1 + \left\langle L^\alpha (2\p_t u^1 \vec{u}\hspace{.05cm}),\p_t L^\alpha \vec{u} \right\rangle  \right|  \lesssim \\
\left| - \langle \vec{u},L^\alpha\p_t \vec u\hspace{.05cm}\rangle \p_t L^\alpha u^1 +  L^\alpha \p_t u^1 \langle\vec{u},\p_t L^\alpha \vec{u}\hspace{.05cm}\rangle \right| \\
+ \sum_{\substack{|\beta| + |\gamma| \le |\alpha| \\ |\beta| \neq |\alpha|}} \left|  \langle L^\gamma \vec{u} ,L^\beta \p_t \vec{u} \hspace{.05cm}\rangle  \p_t L^\alpha u^1 + L^\beta \p_t u^1\langle L^\gamma \vec{u},\p_t L^\alpha \vec{u}\hspace{.05cm}\rangle \right|.
\end{multline}
Using the commutator algebra properties 
\begin{equation}\label{commute}
[L^i,\p_t ] = - \p_{x^i}  = - \frac{1}{t} L^i + \frac{x^i}{t} \p_t,
\end{equation} 
we see a cancellation of the top order terms in the first term on the right hand side of \eqref{quadratic1}. Consequently, the quadratic terms of \eqref{nonlinearities} can be estimated schematically as 
\begin{equation} \label{quadratic2}
\lesssim  \int_{\Sigma_\tau}\sum_{\substack{|\beta| + |\gamma| \le |\alpha| \\ |\beta| \neq \alpha}} |L^\gamma \vec{u} \p_t L^\beta \bm u \p_t L^\alpha \bm u| + \sum_{|\beta| + |\gamma| \le |\alpha|} w_\tau^{-1} |L^\gamma \vec{u} L^\beta \bm u \p_t L^\alpha \bm u| \ \dvol_{\Sigma_\tau}
\end{equation}
where we repeatedly used \eqref{commute} and \eqref{derweights}. We can now estimate the quadratic terms of \eqref{nonlinearities}. 
\begin{prop}[Quadratic energy estimates] \label{quadenergy}
Let $\alpha \neq 0$ be an $m$-tuple\footnote{The case $m = 0$ does not need to be controlled due to \eqref{eq:quadcancel}.} with elements drawn from $\{1,2,3\}$. Then 
\begin{multline*}
\int_{\Sigma_\tau} \left| L^\alpha \big(-2 \langle \vec u,\p_t \vec u\hspace{.05cm}\rangle\big) \p_t L^\alpha u^1 + \left\langle L^\alpha (2\p_t u^1 \vec{u}\hspace{.05cm}),\p_t L^\alpha \vec{u} \right\rangle  \right| \dvol_{\Sigma_\tau}  \lesssim \\
\begin{cases} 
\tau^{-3/2} \fk{E}_1^2 \cdot \fk{E}_2& m = 1, \\
\tau^{-3/2} \fk{E}_2^3 + \tau^{-1} \fk{E}_1\cdot \fk{E}_2^2 + \tau^{-1} \fk{E}_1^2 \cdot \fk{E}_2 & m = 2.
\end{cases}
\end{multline*}
\end{prop}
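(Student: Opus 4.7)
The plan is to follow the systematic H\"older/Cauchy--Schwarz strategy that the paper sets up just before the proposition: the Leibniz expansion plus the commutator cancellation via \eqref{commute} reduces us to controlling the two sums in \eqref{quadratic2}. In every surviving triple product, one factor is a Klein--Gordon quantity $L^\gamma \vec u$ that enjoys the stronger pointwise decay $w_\tau^{-3/2}$ coming from the KG mass term in \eqref{KGenergy}, one factor is an intermediate-order wave/KG quantity, and the last factor is the top-order $\p_t L^\alpha \bm u$. The master idea is to always put $L^\gamma \vec u$ (or the lowest-order factor available) in $L^\infty$ via Proposition~\ref{prop:estimates}, to put the top-order $\p_t L^\alpha \bm u$ in $\Lebw^2_{-1}$ (where it is directly controlled by $\fk E_{|\alpha|}$), and to reserve GNS interpolation for the middle factor only when no pointwise bound is available.

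I would first treat $m=1$. After \eqref{commute} the only triples are $(|\gamma|,|\beta|)\in\{(0,0),(0,1),(1,0)\}$. For $|\gamma|=0$ I use $\|w_\tau\vec u\|_{L^\infty}\lesssim \tau^{-1/2}\fk E_2$ (combining the pointwise bound $|\vec u|\lesssim w_\tau^{-3/2}\fk E_2$ with $w_\tau\ge \tau$) and pair the two $\p_t$ factors in $\Lebw^2_{-1}$, contributing $\tau^{-1/2}\fk E_0$ and $\tau^{-1/2}\fk E_1$; the product is $\tau^{-3/2}\fk E_0\fk E_1\fk E_2$. For $|\gamma|=1$ I instead put $\p_t \bm u$ in $L^\infty$ using $\|\tau\p_t\bm u\|_{L^\infty}\lesssim \tau^{-1/2}\fk E_2$, and estimate $\int|L\vec u|\,|\p_t L^\alpha\bm u|$ by the dual Cauchy--Schwarz $\|L\vec u\|_{\Lebw^2_1}\|\p_t L^\alpha\bm u\|_{\Lebw^2_{-1}}$, which contributes $\tau^{1/2}\fk E_1\cdot\tau^{-1/2}\fk E_1$. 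For the $w_\tau^{-1}$-weighted sum the same splits work, with Hardy's inequality \eqref{Hardy} used to control $\|\bm u\|_{\Lebw^2_{-1}}$ by $\tau^{1/2}\fk E_0$. Collecting gives $\tau^{-3/2}\fk E_1^2\fk E_2$.

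The $m=2$ case is more delicate because among the surviving triples one has $(|\gamma|,|\beta|)=(1,1)$, so neither intermediate factor is clearly the ``small'' one. Here I use the interpolated GNS inequalities \eqref{pqrwave}--\eqref{pqrKG} (collected in Proposition~\ref{prop:estimates}) to put the middle-order $L^\beta \p_t\bm u$ or $L^\beta \bm u$ into $\Lebw^r$ for a well-chosen $r\in(2,6)$; this gives a factor $\fk E_1^{(6-r)/2r}\fk E_2^{(3r-6)/2r}$. The lowest-order factor is placed in $L^\infty$ via Proposition~\ref{prop:estimates}, and the top-order $\p_t L^\alpha \bm u$ remains in $\Lebw^2_{-1}$. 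When the top and middle derivatives are already well separated (e.g.\ $(|\gamma|,|\beta|)=(2,0)$ or $(0,2)$), a plain $L^\infty\cdot \Lebw^2_{-1}\cdot \Lebw^2_{\pm1}$ H\"older split suffices, without invoking GNS. Bookkeeping the $\tau$-powers in each case yields the three terms $\tau^{-3/2}\fk E_2^3$, $\tau^{-1}\fk E_1\fk E_2^2$, and $\tau^{-1}\fk E_1^2\fk E_2$ on the right-hand side of the proposition.

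The main obstacle, in my view, is precisely this middle-derivative split in the $m=2$ case: the cubic product's total derivative count equals the boosts available in the top energy, so a na\"ive pointwise--$L^2$ H\"older leaks a divergent $w_\tau^{1/2}$ factor. The GNS interpolation from \cite{GNAWS} is the device that trades a half power of $\fk E_2$ against a half power of $\fk E_1$ so the weights align. A second subtlety is that the decay $w_\tau^{-3/2}$ (not $w_\tau^{-1/2}$) must be extracted from every surviving quadratic term, since the KG decay is what beats the non-integrable $\tau^{-1}$ of the three-dimensional wave linear decay in \eqref{SchematicEnEst}; this is made possible precisely because the cancellations around \eqref{eq:quadcancel}--\eqref{quadratic2} leave \emph{every} quadratic remainder carrying an undifferentiated $\vec u$ (either as $L^\gamma\vec u$ with $|\gamma|<|\alpha|$, or through the commutator tail with a factor of $w_\tau^{-1}$).
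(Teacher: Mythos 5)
Your proposal follows essentially the same route as the paper: Leibniz-expand, exploit the commutator cancellation of the top-order quadratic terms to land on the remainders in~\eqref{quadratic2}, and then close with a H\"older split that places the Klein--Gordon factor in the strongest available norm, the top-order $\p_t L^\alpha\bm u$ in $\Lebw^2_{-1}$, and resorts to the interpolated GNS inequalities of Proposition~\ref{prop:estimates} whenever the middle factor carries a derivative that would otherwise force a $\fk E_3$ factor. Your $m=1$ computations match the paper line by line, including the $L^\infty$ placement of $\p_t\bm u$ (resp.\ $\vec u$) and the $\Lebw^2_1\times\Lebw^2_{-1}$ Cauchy--Schwarz.

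One small imprecision is worth flagging for $m=2$, case $(|\gamma|,|\beta|)=(1,1)$: both of the non-top factors, $L\vec u$ and $\p_t L\bm u$ (or $L\bm u$ in the $w_\tau^{-1}$-weighted sum), carry one boost, so there is no ``lowest-order factor'' to put in $L^\infty$ as your general prescription suggests. What the paper actually does here is a triple H\"older with exponents $\tfrac13+\tfrac16+\tfrac12=1$: $L\vec u$ goes into $\mr\Sobw_1^{1,3}$ via the KG Sobolev bound $\|\vec u\|_{\mr\Sobw_1^{1,3}}\lesssim\tau^{1/3}\fk E_1$, the middle factor goes into $\mr\Sobw_1^{1,6}$ via \eqref{pqrwave} (giving $\tau^{-5/6}\fk E_2$ after dividing by $\tau$), and the top factor stays in $\Lebw^2_{-1}$. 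This is the step that produces the borderline $\tau^{-1}\fk E_1\fk E_2^2$ rather than the integrable $\tau^{-3/2}$ power. Your sketch does reach the right right-hand side and you correctly single out this case as the delicate one, so the gap is one of precision rather than of ideas: both middle factors need an $L^r$ placement, not just one of them, and the choice $r=3$ for the KG factor and $r=6$ for the wave derivative is what makes the weights $w_\tau^{1/3}\cdot w_\tau^{1/6}\cdot w_\tau^{-1/2}$ cancel exactly.
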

\begin{proof}
Throughout this proof we use the simple inequality $w_\tau^{-1} \le \tau^{-1}$. We prove the estimate for the case $m = 1$ first. In this case the top ordered derivative terms of \eqref{quadratic2} that we need to estimate are of the form
\[ \int_{\Sigma_\tau}| L \vec{u} \p_t \bm u \p_t L \bm u| + w_\tau^{-1} |\vec{u} L \bm u \p_t L \bm u| \ \dvol_{\Sigma_\tau}\]
(the estimates for the lower ordered terms will of course be controlled by the top ones). Here it is understood that $L$ can be any of the boosts $L^i$. For the first term, we estimate the $\p_t \bm u$ factor by the pointwise estimates in Proposition \ref{prop:estimates} and H\"older's inequality on the rest of them 
\begin{align*}
\int_{\Sigma_\tau} |L\vec{u} \p_t \bm u \p_t L \bm u | \ \dvol_{\Sigma_\tau} & \lesssim \tau^{-3/2} \fk{E}_2(\tau) \int_{\Sigma_\tau} |L\vec{u}\ w_\tau^{1/2}| \cdot|\p_t L \bm u\ w_\tau^{-1/2}| \ \dvol_{\Sigma_\tau} \\
& \le \tau^{-3/2}\fk{E}_2(\tau)\cdot (\fk{E}_1(\tau))^2.
\end{align*}
For the second term, we control the $\vec{u}$ factor by the pointwise estimates and use H\"older's inequality on the rest
\begin{align*}
\int_{\Sigma_\tau}  w_\tau^{-1} |\vec{u} L \bm u \p_t L \bm u| \ \dvol_{\Sigma_\tau} & \lesssim \tau^{-3/2} \fk{E}_2(\tau)  \int_{\Sigma_\tau} |L\bm u\ w_\tau^{-1/2}| \cdot|\p_t L \bm u\ w_\tau^{-1/2}| \ \dvol \\
& \le \tau^{-3/2}\fk{E}_2(\tau)\cdot (\fk{E}_1(\tau))^2.
\end{align*}
This concludes the proof for $m = 1$.

For $m = 2$, the terms from \eqref{quadratic2} are 
\begin{multline*}
\int_{\Sigma_\tau} |LL \vec{u} \p_t \bm u \p_t LL\bm u|  + w_\tau^{-1} |\vec{u} LL \bm u \p_tLL\bm u| \\
+ |L \vec u \p_t L \bm u \p_t LL \bm u| + w_\tau^{-1}| L \vec u L \bm u \p_t LL \bm u|  \ \dvol_{\Sigma_\tau}.
\end{multline*}
Again, the estimates for all lower ordered terms can be controlled by the estimates of these. Here it is understood that $LL$ is any arbitrary second order tangential derivative $L^iL^j$. The first two terms are bounded by 
\[ \tau^{-3/2} (\fk{E}_2(\tau))^3\] using the same techniques as $m = 1$ (estimating the lowest ordered terms in $L^\infty$ and the rest by the energies after using H\"older's). The other two terms cannot be treated with the same techniques. Even though $|L \vec{u}\hspace{.05cm}| + |L \bm u|$ can be bounded by $w_\tau^{-1/2}\fk{E}_2(\tau)$, this decay is too weak to improve the bootstrap assumptions that we will make. On the other hand, we can get stronger decay for the third term above by estimating $|L \vec{u}\hspace{.05cm}| \le w_\tau^{-3/2} \fk{E}_3(\tau)$. This is not helpful to us because $\fk{E}_3(\tau)$ requires square integrability of \emph{four} derivatives of $\bm u$ (recall that we want to solve the Cauchy problem for \eqref{negsystem} using data in $H^3$). 

We instead appeal to the interpolated Sobolev estimates in Proposition \ref{prop:estimates} with $r = 3, 6$ to control the third and fourth terms above. We see
\begin{align}
\int_{\Sigma_\tau} |L \vec u \p_t L \bm u \p_t LL \bm u| \ \dvol_{\Sigma_\tau} & = \int_{\Sigma_\tau} |L \vec{u} \ w_\tau^{1/3}| \cdot | \p_t L \bm u\ w_\tau^{1/6}| \cdot | \p_t LL \bm u\ w_\tau^{-1/2}| \ \dvol_{\Sigma_\tau} \notag \\
& \le \| \vec u \hspace{.05cm} \|_{\mr \Sobw_1^{1,3}} \| \p_t \bm u \|_{\mr \Sobw_1^{1,6}} \| \p_t \bm u \|_{\mr \Sobw^{2,2}_{-1}} \notag \\
& \le\tau^{-1} \fk{E}_1(\tau) \cdot (\fk{E}_2(\tau))^2.\label{borderline}
\end{align}
Similarly, we see 
\begin{align}
\int_{\Sigma_\tau}  w_\tau^{-1}| L \vec u L \bm u \p_t LL \bm u|  \ \dvol_{\Sigma_\tau} & \le \tau^{-1}  \int_{\Sigma_\tau} |L \vec{u} \ w_\tau^{1/3}| \cdot | L \bm u\ w_\tau^{1/6}| \cdot | \p_t LL \bm u\ w_\tau^{-1/2}| \ \dvol_{\Sigma_\tau}\notag \\
& \le \tau^{-1}\| \vec u \hspace{.05cm} \|_{\mr \Sobw_1^{1,3}} \| \bm u \|_{\mr \Sobw_1^{1,6}} \| \p_t \bm u \|_{\mr \Sobw^{2,2}_{-1}} \notag\\
& \le \tau^{-1}( \fk{E}_1(\tau))^2 \cdot (\fk{E}_2(\tau)). \label{nonborderline}
\end{align}
This concludes the proof of the proposition.
\end{proof} 
\begin{rmk}
The expression on the right hand side of \eqref{nonborderline} would allow us to close our energy estimates with only a log loss, see Proposition \ref{prop:bootstrap}. The borderline terms that we need to deal with are in fact in \eqref{borderline}.
\end{rmk}
Estimating the cubic terms in \eqref{negsystem} we identify the integrals that we have to estimate are 
\[ \int_{\Sigma_\tau} \big(L^\beta \vec u \ L^\gamma \vec u\ L^\sigma \vec u + L^\beta \vec u \cdot \M(\D L^\gamma \bm u, \D L^\sigma \bm u) + L^\beta \vec u \ L^\gamma \vec u \ L^\sigma \p_t \bm u \big)\cdot \p_t L^\alpha \bm u \dvol_{\Sigma_\tau}\]
for $|\beta| + |\gamma| + |\sigma| = |\alpha|$. Here we implicitly used that vectorfields act on scalars by Lie differentiation, that $\M$ is invariant under the Lorentz boosts $L^i$, and that exterior differentiation commutes with Lie differentiation. 
\begin{prop}[Cubic energy estimates] \label{cubicenergy} Let $\alpha$ be an $m$-tuple with elements drawn from $\{1,2,3\}$. Then 
\begin{multline*}
\int_{\Sigma_\tau} \left| L^\alpha( (\vec u \hspace{.05cm}))^3 + L^\alpha ( (\vec u \hspace{.05cm})^2 \p_t \bm u) + L^\alpha( \vec{u} \cdot \M(\D \bm u, \D \bm u))\right|\cdot |\p_t L^\alpha \bm u| \ \dvol_{\Sigma_\tau} \lesssim \\
\begin{cases}
\tau^{-3} \fk{E}_m^2\cdot \fk{E}_2^2 & m = 0,1 \\
\tau^{-3}\fk E_2^4 + \tau^{-2} \fk{E}_0^{1/4}\cdot \fk{E}_1\cdot\fk{E}_2^{11/4} + \tau^{-3/2} \fk{E}_1^2 \cdot \fk{E}_2^2 + \tau^{-5/2} \fk{E}_1\cdot\fk{E}_2^3& m = 2
\end{cases}
\end{multline*}
\end{prop}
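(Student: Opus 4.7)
The plan is to apply the Leibniz rule to distribute $L^\alpha$ across each of the three cubic expressions in \eqref{cubicterms}, using that the Lorentz boosts $L^i$ act by Lie differentiation and thus commute with both $\M$ and the exterior differential $\D$. This produces a finite sum of integrals of the form $\int L^{\alpha_1}\!\star \cdot L^{\alpha_2}\!\star \cdot L^{\alpha_3}\!\star \cdot \p_t L^\alpha \bm u \; \dvol_{\Sigma_\tau}$ with $|\alpha_1| + |\alpha_2| + |\alpha_3| = |\alpha|$, where each $\star$ stands for $\vec u$, $\bm u$, $\p_t \bm u$, or $\D \bm u$. For each such integral I would apply H\"older's inequality following the guiding principle: keep the factor with the most boost derivatives in $L^2$ (to pair against $\p_t L^\alpha \bm u$ through $\|\p_tL^\alpha\bm u\|_{\Lebw^2_{-1}}$ from the energy), put the lower-derivative factors in $L^\infty$ via Proposition \ref{prop:estimates}, and switch to the interpolated $L^r$ Sobolev norms of Proposition \ref{prop:estimates} whenever the pointwise bounds fail to supply enough decay.

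For $m = 0, 1$ the strong pointwise Klein-Gordon decay $|\vec u| \lesssim w_\tau^{-3/2}\fk{E}_2$ is the key: since each nonlinearity contains at least one $\vec u$ factor (two for terms 1 and 2), placing two such factors in $L^\infty$ supplies a weight $w_\tau^{-3} \le \tau^{-3}$. The remaining two factors are paired by Cauchy-Schwarz, using $\|\vec u\|_{\Lebw^2_1} \lesssim \tau^{1/2} \fk{E}_m$ and $\|\p_t L^\alpha \bm u\|_{\Lebw^2_{-1}} \lesssim \tau^{-1/2}\fk{E}_m$, producing the desired $\tau^{-3}\fk{E}_m^2\cdot\fk{E}_2^2$. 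For term 3, which carries only one $\vec u$ factor, the missing $w_\tau^{-3/2}$ decay is recovered from $|\M(\D\bm u, \D\bm u)|$: the decomposition $\p_{x^i} = t^{-1}(L^i - x^i\p_t)$ together with $t \ge \tau$ and $|x|\le t$ gives $|\D\bm u| \lesssim \tau^{-1}w_\tau^{-1/2}\fk{E}_2$, so $|\M(\D\bm u, \D\bm u)| \lesssim \tau^{-2}w_\tau^{-1}\fk{E}_2^2$, compensating for the absence of a second $\vec u$ factor.

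For $m = 2$ the two boosts distribute over the three cubic factors as either $(2,0,0)$ or $(1,1,0)$ up to permutation. The $(2,0,0)$ cases reduce to the earlier analysis and contribute the baseline $\tau^{-3}\fk{E}_2^4$. The $(1,1,0)$ cases are borderline: using $|L\vec u| \lesssim w_\tau^{-1/2}\fk{E}_2$ on two factors yields only $w_\tau^{-1} \le \tau^{-1}$ of decay, which is not integrable in $\tau$. The remedy is to invoke the interpolated Sobolev estimates of Proposition \ref{prop:estimates} at $r = 4$. The Klein-Gordon interpolation gives $\|L\vec u\|_{\Lebw^4_1} \lesssim \tau^{1/4}\fk{E}_1$, and a $\Lebw^4$-$\Lebw^4$-$\Lebw^2$ H\"older splitting of two $L\vec u$ factors against $\p_t L^2 \bm u$ produces $\tau^{-3/2}\fk{E}_1^2\cdot\fk{E}_2^2$ for terms 1 and 2. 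The wave interpolation, combined with the $\D$-to-$L$ decomposition, yields $\|\D\bm u\|_{\Lebw^4_0} \lesssim \tau^{-3/4}\fk{E}_0^{1/4}\fk{E}_1^{3/4}$ and $\|\D L\bm u\|_{\Lebw^4_0} \lesssim \tau^{-3/4}\fk{E}_1^{1/4}\fk{E}_2^{3/4}$; applied to term 3 in configurations of the shape $L\vec u \cdot \M(\D\bm u, \D L\bm u)$ with $|L\vec u|$ bounded pointwise, this produces exactly the unusual $\tau^{-2}\fk{E}_0^{1/4}\cdot\fk{E}_1\cdot\fk{E}_2^{11/4}$ bound, while analogous partial-interpolation configurations (notably $(1,0,1)$ on term 2 with pointwise bounds on both $|\vec u|$ and $|L\vec u|$, paired with Cauchy-Schwarz on $\p_tL\bm u$ and $\p_tL^2\bm u$) yield the residual $\tau^{-5/2}\fk{E}_1\cdot\fk{E}_2^3$. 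The main obstacle will be the weight bookkeeping in these borderline cases: the $w_\tau^{-1/2}$ factor from $\p_t L^2 \bm u\in\Lebw^2_{-1}$ together with the weights on the two interpolated $L^4$ factors generally leaves a residual $w_\tau^{-\sigma}$ to be absorbed as $\tau^{-\sigma}$, and one must carefully track the exponents on $\fk{E}_0,\fk{E}_1,\fk{E}_2$ coming from each interpolation to land precisely on the four stated bounds.
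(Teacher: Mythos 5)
Your overall strategy (Leibniz, then H\"older placing the high-derivative factor in the weighted $L^2$ energy norm, the rest in $L^\infty$ via Proposition~\ref{prop:estimates}, switching to the interpolated $\Lebw^r$ norms when pointwise decay runs short) is the same as the paper's, and most of your cases are handled correctly --- including your choice of the $\Lebw^4$--$\Lebw^4$--$\Lebw^2$ H\"older splitting for the $(1,1,0)$ densities and the $r=4$ wave interpolation producing the $\tau^{-2}\fk E_0^{1/4}\fk E_1\fk E_2^{11/4}$ bound, which reproduce the paper's $r=3,6$ computations exactly.

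There is, however, a genuine gap in your treatment of the $(1,0,1)$ distribution on $(\vec u\hspace{.05cm})^2\p_t\bm u$ at $m=2$, whose density is (schematically) $\vec u\, L\vec u\, \p_t L\bm u\cdot\p_t LL\bm u$. You propose to estimate $|\vec u\hspace{.05cm}|$ and $|L\vec u\hspace{.05cm}|$ pointwise and then Cauchy--Schwarz $\p_t L\bm u$ against $\p_t LL\bm u$. Following this through: $|\vec u\hspace{.05cm}|\lesssim w_\tau^{-3/2}\fk E_2$ and $|L\vec u\hspace{.05cm}|\lesssim w_\tau^{-1/2}\fk E_2$ yield
\[
\fk E_2^2\int_{\Sigma_\tau} w_\tau^{-2}\,|\p_t L\bm u|\,|\p_t LL\bm u|\,\dvol_{\Sigma_\tau}
\;\leq\;\fk E_2^2\,\tau^{-1}\,\|\p_t L\bm u\|_{\Lebw^2_{-1}}\|\p_t LL\bm u\|_{\Lebw^2_{-1}}
\;\lesssim\;\tau^{-2}\,\fk E_1\,\fk E_2^3,
\]
since the best one can do with both factors in $\Lebw^2_{-1}$ is to convert the leftover $w_\tau^{-1}$ weight into $\tau^{-1}$. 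This is strictly weaker than the claimed $\tau^{-5/2}\fk E_1\fk E_2^3$, and it is \emph{not} dominated by any term on the right-hand side of the proposition (e.g.\ it exceeds $\tau^{-2}\fk E_0^{1/4}\fk E_1\fk E_2^{11/4}$ since $\fk E_0\le\fk E_2$). The extra half power of $\tau$ cannot be recovered by pointwise bounds alone: one must replace the pointwise control of $L\vec u$ by the Klein-Gordon interpolation (for instance $\|L\vec u\hspace{.05cm}\|_{\mr\Sobw_1^{1,3}}\lesssim\tau^{1/3}\fk E_1$) and control $\p_t L\bm u$ via the \emph{wave} interpolation at $r=6$ (where the favorable weight $w_\tau^{r/2-2}=w_\tau^{1}$ kicks in), giving $\|\p_t L\bm u\|_{\mr\Sobw_1^{1,6}}\lesssim\tau^{-5/6}\fk E_2$; after using the pointwise $|\vec u\hspace{.05cm}|\lesssim w_\tau^{-3/2}\fk E_2$ factor and Cauchy--Schwarz on $\p_t LL\bm u$, these combine to the stated $\tau^{-5/2}\fk E_1\fk E_2^3$. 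In short: for this borderline density the ``put two factors in $L^\infty$'' heuristic fails, and the interpolated Sobolev norms are genuinely required, not merely an option.
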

\begin{proof}
Let us treat the terms with $(\vec{u}\hspace{.05cm})^3$ first. When $m = 0$, we control two of the factors by the pointwise estimates:
\begin{align*}
\int_{\Sigma_\tau} |(\vec u\hspace{.05cm})^3\cdot\p_t \bm u| \ \dvol_{\Sigma_\tau}&  \lesssim \tau^{-3} \fk{E}_2(\tau)^2 \int_{\Sigma_\tau} |\vec u \ w_\tau^{1/2}| \cdot |\p_t \bm u \ w_\tau^{-1/2}| \ \dvol_{\Sigma_\tau} \\
& \lesssim \tau^{-3} \fk{E}_0(\tau)^2\cdot \fk{E}_2(\tau)^2,
\end{align*}
as desired. For $m = 1$, the same proof follows by controlling the two factors that are \emph{not} differentiated by the pointwise estimates (note that the density is $| L\vec u\hspace{.05cm} (\vec u \hspace{.05cm})^2 \p_tL \bm u|$). When $m = 2$, this can again be used to bound the terms of the form 
\[\int_{\Sigma_\tau} |LL \vec u\cdot (\vec u\hspace{.05cm})^2\cdot\p_t LL\bm u| \ \dvol_{\Sigma_\tau}   \lesssim \tau^{-3}\fk{E}_2(\tau)^4.\] 
For the other cases, we couple the pointwise estimates \emph{and} the interpolated GNS estimates of Proposition \ref{prop:estimates} to find
\begin{align*}
\int_{\Sigma_\tau} |(L \vec u \hspace{.05cm})^2\cdot \vec u \cdot \p_t LL \bm u| \ \dvol_{\Sigma_\tau} & \lesssim \tau^{-3/2} \fk{E}_2  \int_{\Sigma_\tau} |L \vec{u} \ w_\tau^{1/3}| \cdot | L \vec u\ w_\tau^{1/6}| \cdot | \p_t LL \bm u\ w_\tau^{-1/2}| \ \dvol_{\Sigma_\tau} \\
& \lesssim \tau^{-3/2} \fk{E}_1(\tau)^2 \cdot \fk{E}_2(\tau)^2.
\end{align*}

Next we control the $(\vec u \hspace{.05cm})^2 \p_t \bm u$ terms. For $m = 0$, we control one $\p_t \bm u$ and one Klein-Gordon term by the energy:
\begin{align*}
\int_{\Sigma_\tau} |(\vec u\hspace{.05cm})^2 \p_t \bm u \cdot \p_t \bm u| \ \dvol_{\Sigma_\tau} & \lesssim \tau^{-3} \fk{E}_2 \int_{\Sigma_\tau} |\vec u\ w_\tau^{1/2}| \cdot |\p_t \bm u \ w_\tau^{-1/2}| \ \dvol_{\Sigma_\tau} \\
& \lesssim \tau^{-3} \fk{E}_0(\tau)^2 \cdot \fk{E}_2(\tau)^2.
\end{align*}
For $m = 1$, the same technique is used to bound
\[ \int_{\Sigma_\tau} | \vec u\ L\vec u\ \p_t u \cdot \p_t L u| \dvol_{\Sigma_\tau} \lesssim \tau^{-3} \fk{E}_1(\tau)^2 \fk{E}_2(\tau)^2.\]
When the derivative hits the $\p_t \bm u$ factor we sacrifice some of the decay given by the Klein-Gordon terms\footnote{Of course, there are lower-ordered terms which appear as a consequence of commuting the derivative: $L^i\p_t \bm u = \p_t L^i \bm u - w_\tau^{-1} L^i \bm u + \tfrac{x^i}{w_\tau} \p_t \bm u $. One can check that the energies of these commuted terms are bounded by $\tau^{-3}\fk{E}_0\fk{E}_1\fk{E}_2^2$. We drop these lower ordered energies because will of course be controlled by $\tau^{-3} \fk{E}_1^2 \fk{E}_2^2$.}:
\begin{align*}
\int_{\Sigma_\tau} | (\vec u\hspace{.05cm})^2 \p_t L\bm u \cdot \p_t L \bm u| \ \dvol_{\Sigma_\tau} & \lesssim \tau^{-2} \fk{E}_2^2 \int_{\Sigma_\tau} | \p_t L\bm u \ w_\tau^{-1/2}| \cdot| \p_t L \bm u \ w_\tau^{-1/2}| \ \dvol_{\Sigma_\tau} \\
& \lesssim \tau^{-2} \fk{E}_2(\tau)^2\cdot \| \p_t u \|_{\mr \Sobw_{-1}^{1,2}}^2 \\
& \lesssim \tau^{-3} \fk{E}_1(\tau)^2 \cdot \fk{E}_2(\tau)^2.
\end{align*}
For $m = 2$ the densities we need to estimate are $\p_t LL\bm u$ multiplied by \footnote{Again, there are lower ordered terms that rise from commuting $L^i$ and $\p_t$. We drop these energies because one can check that they will all be controlled by the energies of $(\vec u\hspace{.05cm})^2 \ \p_t LL \bm u$.}
\begin{equation}\label{eq:densities}
 \vec u \ LL \vec u  \ \p_t \bm u, \qquad (L \vec u \hspace{.05cm})^2 \p_t \bm u, \qquad \vec u \ L \vec u \ \p_t L \vec u,\qquad (\vec u\hspace{.05cm})^2 \p_t LL \bm u.
\end{equation}
For the first density we estimate the undifferentiated terms by the energies as we did for $m = 0$ and $m = 1$ to see
\[ \int_{\Sigma_\tau} | \vec u \ LL \vec u  \ \p_t \bm u \cdot \p_t LL \bm u| \dvol_{\Sigma_\tau} \lesssim \tau^{-3} \fk{E}_2(\tau)^4.\]
The second density of \eqref{eq:densities} is treated with the interpolation inequalities after using the pointwise estimate to control $\p_t \bm u$:
\begin{align*}
\int_{\Sigma_\tau} |(L \vec u \hspace{.05cm})^2 \p_t \bm u \cdot \p_t LL \bm u| \ \dvol_{\Sigma_\tau} & \lesssim \tau^{-3/2} \fk{E}_2 \int_{\Sigma_\tau} | L \vec u \ w_\tau^{1/3}|\cdot| L\vec u \ w_\tau^{1/6}| \cdot |\p_t LL \bm u \ w_\tau^{-1/2} | \ \dvol_{\Sigma_\tau} \\
& \lesssim\tau^{-3/2} \fk{E}_2 \| \vec u\hspace{.05cm} \|_{\mr \Sobw_1^{1,3}} \| \vec u\hspace{.05cm} \|_{\mr \Sobw_{1}^{1,6}} \| \p_t \bm u\|_{\mr \Sobw_{-1}^{2,2}} \\
& \lesssim \tau^{-3/2} \fk{E}_1(\tau)^2 \fk{E}_2(\tau)^2.
\end{align*}
The third density is treated similarly:
\begin{align*}
\int_{\Sigma_\tau} |\vec u \ L \vec u \ \p_t L \vec u  \cdot \p_t LL \bm u| \ \dvol_{\Sigma_\tau} & \lesssim \tau^{-3/2} \fk{E}_2 \int_{\Sigma_\tau} | L \vec u \ w_\tau^{1/3}|\cdot| \p_t L\bm u \ w_\tau^{1/6}| \cdot |\p_t LL \bm u \ w_\tau^{-1/2} | \ \dvol_{\Sigma_\tau} \\
& \lesssim\tau^{-3/2} \fk{E}_2(\tau) \cdot \| \vec u\hspace{.05cm} \|_{\mr \Sobw_1^{1,3}}\cdot \| \p_t \bm  u\hspace{.05cm}  \|_{\mr \Sobw_{1}^{1,6}} \cdot \| \p_t \bm u\|_{\mr \Sobw_{-1}^{2,2}} \\
& \lesssim \tau^{-5/2} \fk{E}_1(\tau) \fk{E}_2(\tau)^3.
\end{align*}
The last case of \eqref{eq:densities} is treated by controlling the two Klein-Gordon factors by the pointwise estimates
\begin{align*}
\int_{\Sigma_\tau} |(\vec u\hspace{.05cm})^2 \p_t LL \bm u \cdot \p_t LL \bm u| \dvol_{\Sigma_\tau} & \lesssim \tau^{-2} \fk{E}_2^2 \int_{\Sigma_\tau} |\p_t LL \bm u \ w_\tau^{-1/2}| \cdot |\p_t LL \bm u \ w_\tau^{-1/2} | \ \dvol_{\Sigma_\tau} \\
& \lesssim \tau^{-2} \fk{E}_2(\tau)^2 \cdot \| \p_t \bm u \|_{\mr \Sobw_{-1}^{2,2}}^2 \\
& \lesssim \tau^{-3} \fk{E}_2(\tau)^4.
\end{align*}

We finally treat the $\vec u \cdot \M(\D \bm u, \D \bm u)$ terms. Note firstly that the second equality in \eqref{commute} implies the estimate
\[ |\M(\D \psi_1,\D\psi_2) | \le \frac{1}{\tau^2} | L\psi_1| \cdot | L\psi_2| + |\p_t\psi_1 | \cdot |\p_t \psi_2|\] 
for any scalars $\psi_1, \ \psi_2$. For $m = 0$ the pointwise estimates imply
\begin{align*}
\int_{\Sigma_\tau} |\vec u \cdot \M(\D \bm u, \D \bm u) \cdot \p_t \bm u| \dvol_{\Sigma_\tau} & \lesssim \tau^{-3} \fk{E}_2(\tau)^2 \int_{\Sigma_\tau} |\vec u \ w_\tau^{1/2}| \cdot |\p_t \bm u \ w_\tau^{-1/2}| \ \dvol_{\Sigma_\tau} \\
& \lesssim \tau^{-3} \fk{E}_0(\tau)^2 \fk{E}_2(\tau)^2.
\end{align*}
For $m = 1$ we similarly see
\[ \int_{\Sigma_\tau} |L\vec u \cdot \M(\D \bm u, \D \bm u) \cdot \p_t L \bm u| \dvol_{\Sigma_\tau}  \lesssim \tau^{-3} \fk{E}_1(\tau)^2 \fk{E}_2(\tau)^2.\]
When the derivative hits the null form factor the density is $\vec u \cdot \M(\D L u,\D u) \cdot \p_tL \bm u$. We can then use the improved Klein-Gordon decay $|\vec u \hspace{.05cm} | \lesssim w_\tau^{-3/2} \fk{E}_2$ to estimate 
\begin{align*}
\int_{\Sigma_\tau} |\vec u \cdot \M(\D L \bm u, \D \bm u) \cdot \p_t L \bm u| \dvol & \lesssim \tau^{-1} \fk{E}_2 \int_{\Sigma_\tau}\left( \tau^{-2} |LL \bm u| \cdot |L \bm u| + |\p_t L \bm u | \cdot| \p_t \bm u| \right)\frac{ |\p_t L \bm u|}{w_\tau^{1/2}} \ \dvol \\
& \lesssim \tau^{-2} \fk{E}_2^2 \int_{\Sigma_\tau}\left(  \frac{|LL \bm u|}{\tau w_\tau^{1/2}} + \frac{|\p_t L \bm u|}{w_\tau^{1/2}} \right) \frac{ |\p_t L \bm u|}{w_\tau^{1/2}} \ \dvol \\
& \lesssim \tau^{-3} \fk{E}_1^2 \fk{E}_2^2.
\end{align*}
Replicating the previous estimates, when $m = 2$, 
\[ \int_{\Sigma_\tau} \left| LL\vec u \cdot \M(\D \bm u, \D \bm u)+ \vec{u} \cdot \M(\D LL\bm u, \D \bm u)\right| \cdot| \p_tLL \bm u| \dvol \lesssim \tau^{-3} \fk{E}_2^4.\]
The remaining term 
\[ \int_{\Sigma_\tau} |L\vec u \cdot \M(\D L \bm u, \D \bm u) \p_t LL \bm u| \ \dvol_{\Sigma_\tau}\] can't be treated in the same way because the improved decay from the Klein-Gordon term comes at a loss of one derivative: $|L\vec u \hspace{.05cm} | \lesssim w_\tau^{-3/2} \fk{E}_3$. We must then rely on the weaker estimate $|L\vec u \hspace{.05cm} | \lesssim w_\tau^{-1/2} \fk{E}_2$ and remedy this loss with the interpolated GNS estimates from Proposition \ref{prop:estimates} with $r = 4$:
\begin{align*}
\int_{\Sigma_\tau} |L\vec u \cdot \M(\D L \bm u, \D \bm u) \cdot \p_t L \bm u| \dvol & \lesssim \fk{E}_2 \int_{\Sigma_\tau} \left( \tau^{-2} |LL\bm u| \cdot |L\bm u| + |\p_t L\bm u| \cdot| \p_t \bm u|\right) \frac{\p_t LL\bm u}{w_\tau^{1/2}}\ \dvol \\
& \lesssim \tau^{-2} \fk{E}_0^{1/4}\cdot \fk{E}_1^{3/4}\cdot \fk{E}_1^{1/4}\cdot \fk{E}_2^{3/4}\cdot \fk{E}_2.
\end{align*}
and the proposition follows.
\end{proof}
Using \eqref{totalenergy}, we have as an immediate corollary of Propositions \ref{quadenergy} and \ref{cubicenergy} the following \emph{a priori} estimates:
\begin{cor} \label{cor-apri-est}
\begin{align}
\fk{E}_0(\tau_1)^2 - \fk{E}_0(\tau_0)^2 & \lesssim \int_{\tau_0}^{\tau_1} \tau^{-3} \fk{E}_0^2 \fk{E}_2^2 \ \D \tau \label{apriori0}\\
\fk{E}_1(\tau_1)^2 - \fk{E}_1(\tau_0)^2 & \lesssim \int_{\tau_0}^{\tau_1} \tau^{-3/2} \fk{E}_1^2 \fk{E}_2 + \tau^{-3} \fk{E}_1^2 \fk{E}_2^2 \ \D \tau \label{apriori1} \\
\fk{E}_2(\tau_1)^2 - \fk{E}_2(\tau_0)^2 & \lesssim \int_{\tau_0}^{\tau_1} \tau^{-1} \fk{E}_1 \fk{E}_2\big(\fk{E}_1+\fk{E}_2\big) + \tau^{-3/2} \fk{E}_2^2\left(\fk{E}_2 + \fk{E}_1\fk{E}_2+  \fk{E}_1^2 \right) \label{apriori2} \\
& \qquad \qquad \qquad    + \tau^{-3} \fk E_2^4 + \tau^{-2} \fk{E}_0^{1/4}\cdot \fk{E}_1\cdot\fk{E}_2^{11/4} \ \D \tau.
\end{align}
\end{cor}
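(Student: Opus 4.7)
The corollary is a direct consequence of the fundamental energy identity \eqref{SchematicEnEst} combined with Propositions \ref{quadenergy} and \ref{cubicenergy}, so the proof will be a bookkeeping argument. The plan is as follows.

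First, for each multi-index $\alpha$ with $|\alpha| \le k$, apply \eqref{SchematicEnEst} to $L^\alpha \bm u$. Since the Lorentz boosts are Killing and commute with both $\Box_\M$ and the identity (so $[L^i,\Box_\M - 1] = 0$ on the Klein-Gordon components as well), the commuted field $L^\alpha \bm u = (L^\alpha u^1, L^\alpha \vec u\hspace{.05cm})$ satisfies a system whose source is $L^\alpha$ applied to the right-hand side of \eqref{negsystem}. Square both sides of \eqref{SchematicEnEst}, sum over $|\alpha| \le k$, and invoke the comparison $\fk{E}_k(\tau)^2 \approx \sum_{|\alpha|\le k} \mathcal{E}_\tau[L^\alpha\bm u]^2$ recorded after \eqref{totalenergy}. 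This reduces matters to bounding the spacetime integrals \eqref{nonlinearities} for every $\alpha$ with $|\alpha|\le k$.

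Next, split those integrals according to whether the interaction arises from the quadratic nonlinearity $-2\langle \vec u, \p_t \vec u\hspace{.05cm}\rangle$, $2u^i \p_t u^1$ or from one of the cubic types in \eqref{cubicterms}, and then apply Propositions \ref{quadenergy} and \ref{cubicenergy} term by term. For $k=0$, the quadratic block vanishes completely by the pointwise cancellation \eqref{eq:quadcancel}, so only the $m=0$ cubic estimate $\tau^{-3}\fk{E}_0^2 \fk{E}_2^2$ contributes, which gives \eqref{apriori0}. For $k=1$, combine the $m=0$ cubic bound with the $m=1$ quadratic bound $\tau^{-3/2}\fk{E}_1^2\fk{E}_2$ and the $m=1$ cubic bound $\tau^{-3}\fk{E}_1^2\fk{E}_2^2$; using the monotonicity $\fk{E}_0 \le \fk{E}_1$ the $m=0$ term is absorbed into the $m=1$ cubic contribution, yielding \eqref{apriori1}.

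For $k=2$ we sum the contributions from $m=0,1,2$ in Propositions \ref{quadenergy} and \ref{cubicenergy}. The $m=2$ quadratic bound $\tau^{-3/2}\fk{E}_2^3 + \tau^{-1}\fk{E}_1\fk{E}_2^2 + \tau^{-1}\fk{E}_1^2\fk{E}_2$ fits into the schematic form $\tau^{-1}\fk{E}_1\fk{E}_2(\fk{E}_1+\fk{E}_2) + \tau^{-3/2}\fk{E}_2^3$; the $m=2$ cubic bound gives the remaining $\tau^{-3}\fk{E}_2^4$, $\tau^{-2}\fk{E}_0^{1/4}\fk{E}_1\fk{E}_2^{11/4}$, $\tau^{-3/2}\fk{E}_1^2\fk{E}_2^2$, and $\tau^{-5/2}\fk{E}_1\fk{E}_2^3$, the last of which is absorbed into $\tau^{-3/2}\fk{E}_1\fk{E}_2^3$ since $\tau^{-5/2}\le \tau^{-3/2}$. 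The $m\le 1$ terms, already controlled by \eqref{apriori0}--\eqref{apriori1}, are subsumed into these using $\fk{E}_j\le \fk{E}_2$ for $j\le 2$. Together with the fundamental inequality, integration in $\tau$ over $[\tau_0,\tau_1]$ produces \eqref{apriori2}.

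There is no real obstacle here beyond careful bookkeeping; the only subtle point is tracking which contributions have been absorbed into which schematic term in the statement, in particular ensuring the peculiar $\tau^{-2}\fk{E}_0^{1/4}\fk{E}_1\fk{E}_2^{11/4}$ term (which comes from the borderline $r=4$ interpolation estimate in Proposition \ref{cubicenergy}) is retained explicitly, as it cannot be controlled by any of the more favorable terms without further information about the interplay of the energies, and will play a decisive role later when the bootstrap is closed.
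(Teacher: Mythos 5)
Your proof is correct and takes the same approach as the paper, which records this corollary as an immediate consequence of \eqref{SchematicEnEst} applied to $L^\alpha\bm u$ together with Propositions \ref{quadenergy} and \ref{cubicenergy}, and your bookkeeping for $k=0,1,2$ is accurate (including the absorption of $\tau^{-5/2}\fk{E}_1\fk{E}_2^3$ into $\tau^{-3/2}\fk{E}_1\fk{E}_2^3$ and of the $m\le 1$ contributions into the $m=2$ ones via $\fk{E}_j\le\fk{E}_2$). The one imprecision is the phrase ``square both sides of \eqref{SchematicEnEst}'': that inequality already involves $\mathcal{E}_\tau[\cdot]^2$, so no further squaring is needed; what you actually do is sum the instance of \eqref{SchematicEnEst} for each $L^\alpha\bm u$ over $|\alpha|\le k$ and then use the equivalence $\fk{E}_k(\tau)^2\approx\sum_{|\alpha|\le k}\mathcal{E}_\tau[L^\alpha\bm u]^2$, which follows from the linear comparison stated after \eqref{totalenergy}.
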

\noindent These estimates imply the following bootstrap estimate.
\begin{prop}\label{prop:bootstrap}
Assume that the initial data satisfy 
\begin{equation}
\fk{E}_2(2) \le \epsilon \label{intialdata}
\end{equation}
and that for some $\tau_{\text{max}} > 2$ the bootstrap assumptions
\begin{equation}\label{auxboot}
\begin{cases}
\fk{E}_0(\tau) \le \delta \\
\fk{E}_1(\tau) \le \delta \\
\fk{E}_2(\tau) \le \delta \tau^\gamma
\end{cases}
\end{equation}
hold for all $\tau \in [2,\tau_{\text{max}}]$ and some $\delta < 1, \ \gamma < \frac{1}{2}$. Then there exists a constant $C$ depending only on $\gamma$ such that the improved estimates 
\begin{equation}\label{improvedboot}
\begin{cases}
\fk{E}_0(\tau) \le \epsilon + C \delta^{3/2} \\
\fk{E}_1(\tau) \le \epsilon + C \delta^{3/2} \\
\fk{E}_2(\tau) \le \epsilon + C \delta^{3/2} \tau^\gamma
\end{cases}
\end{equation}
hold for all $\tau \in [2,\tau_{\text{max}}]$.
\end{prop}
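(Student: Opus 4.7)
The plan is a direct substitute-and-integrate closure of the energy hierarchy. For each $k \in \{0,1,2\}$, I plug the bootstrap assumptions \eqref{auxboot} into the corresponding integrand on the right-hand side of the a priori estimate from Corollary \ref{cor-apri-est}, integrate in $\tau$ over $[2,\tau_1]$, and compare the result to the target inequality in \eqref{improvedboot}. The initial data bound $\fk{E}_k(2) \le \fk{E}_2(2) \le \epsilon$ (which holds because the energies $\fk{E}_k$ are monotone in $k$) supplies the initial value for the integrated differential inequality.

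Starting with $\fk{E}_0$, substitution into \eqref{apriori0} produces an integrand $\delta^4 \tau^{2\gamma - 3}$; the hypothesis $\gamma < 1/2$ gives $2\gamma - 3 < -1$, so the integral is uniformly bounded, and $\fk{E}_0(\tau_1)^2 \le \epsilon^2 + C\delta^4$. Since $\delta < 1$ and so $\delta^2 \le \delta^{3/2}$, the elementary inequality $\sqrt{a^2+b^2} \le a+b$ for nonnegative $a,b$ yields $\fk{E}_0(\tau_1) \le \epsilon + C\delta^{3/2}$. The argument for $\fk{E}_1$ is identical in spirit: \eqref{apriori1} furnishes integrands $\delta^3 \tau^{\gamma - 3/2}$ and $\delta^4 \tau^{2\gamma - 3}$, both of which are uniformly integrable precisely because $\gamma < 1/2$, and the same square-root inequality produces the improved bound.

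The substantive step is the analysis of $\fk{E}_2$. Plugging \eqref{auxboot} into \eqref{apriori2} yields a sum of integrands of the form $\delta^a \tau^b$ for various exponents; I need to verify that each contributes at most $C\delta^3 \tau_1^{2\gamma}$ to the time integral. The decisive contribution is the $\tau^{-1} \fk{E}_1 \fk{E}_2^2$ term, which produces $\delta^3 \tau^{2\gamma - 1}$ and integrates to $\delta^3 \tau_1^{2\gamma}/(2\gamma)$ — this is the borderline that forces the ansatz of $\tau^\gamma$ growth and explains why $\gamma > 0$ is required. Every other term will be either integrable or strictly dominated by $\tau^{2\gamma}$: in particular for the GNS-derived cubic $\tau^{-2} \fk{E}_0^{1/4} \fk{E}_1 \fk{E}_2^{11/4}$ the check reduces to $11\gamma/4 - 1 \le 2\gamma$, i.e.\ $\gamma \le 4/3$, which is automatic. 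Assembling the contributions gives $\fk{E}_2(\tau_1)^2 \le \epsilon^2 + C\delta^3 \tau_1^{2\gamma}$, and the square-root inequality delivers $\fk{E}_2(\tau_1) \le \epsilon + C\delta^{3/2} \tau_1^\gamma$.

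The main obstacle is purely bookkeeping: one has to verify that the implicit constant $C$ in the $\fk{E}_2$ estimate depends only on $\gamma$ (carrying a factor $1/(2\gamma)$ from the borderline term), and that no combination of bootstrap factors on a given integrand grows faster than $\tau^{2\gamma}$ at leading order. This is where the precise hypothesis $\gamma < 1/2$ plays two distinct roles: it guarantees integrability in the $\fk{E}_0$ and $\fk{E}_1$ inequalities, and simultaneously it ensures that all nonlinear contributions to $\fk{E}_2$ beyond the $\tau^{-1}\fk{E}_1\fk{E}_2^2$ source produce either bounded integrals or growth strictly slower than $\tau^{2\gamma}$.
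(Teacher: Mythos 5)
Your proposal is correct and follows essentially the same route as the paper: substitute the bootstrap bounds \eqref{auxboot} into the a priori estimates of Corollary \ref{cor-apri-est}, integrate in $\tau$, observe that $\gamma<1/2$ forces all integrands to contribute at most $C\delta^3\tau^{2\gamma}$ (with the $\tau^{-1}\fk{E}_1\fk{E}_2^2$ term being the borderline source of the $\tau^{2\gamma}$ growth), and then use $\sqrt{a^2+b^2}\le a+b$ to pass from the squared energies to the stated bounds. Your explicit bookkeeping of the exponents (including the $11\gamma/4-1\le 2\gamma$ check for the GNS-derived cubic) matches the structure the paper uses, just written out in more detail.
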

\begin{proof}
Improving the estimate for $\fk{E}_0$ follows from \eqref{apriori0} after noting that 
\[\int_2^\tau \sigma^{-3} \fk{E}_0(\sigma)^2 \cdot \fk{E}_2(\sigma)^2 \ \D \sigma \le \delta^4 \int_2^\tau \sigma^{-3 + 2\gamma} \ \D \sigma \le \delta^4 \int_2^\infty \sigma^{-3 + 2\gamma} \ \D \sigma \le C \delta^3.\]
Similarly, the estimate for $\fk{E}_1$ follows from \eqref{apriori1} because $\sigma^{-3/2 + \gamma}$ is integrable for $\sigma \in [2,\infty)$ provided that $\gamma < 1/2$.  

We begin to improve the bootstrap $\fk{E}_2$ by controlling the first two terms in the right hand side of \eqref{apriori2}, which are bounded by 
\[   \delta^3\int_2^\tau \sigma^{-1+2\gamma} \ \D \sigma \le C \delta^3 \tau^{2\gamma}.\]
The rest of the terms are all bounded by 
\[ \delta^3 \int_2^\infty  \sigma^{-3/2 + 3\gamma} \ \D \sigma \le C \delta^3\tau^{2\gamma},\] 
provided that $\gamma < 1/2$. We now consider $\gamma$ fixed once and for all.
\end{proof}

As a consequence of the improved estimates, if we choose $\delta \le (4C)^{-1/2}$ and then $\epsilon < \delta/4$, we we conclude
\[ \begin{cases}
\fk{E}_0(\tau) \le \frac{1}{2} \delta \\
\fk{E}_1(\tau) \le\frac{1}{2} \delta\\
\fk{E}_2(\tau) \le \frac{1}{2}  \delta \tau^\gamma
\end{cases}\]
In this case the global existence part of Theorem \ref{maintheorem1} follows by a continuity argument, and the decay estimates follow from an application of the pointwise estimates of Proposition \ref{prop:estimates} and these energy bounds.

\section{Global stability in the setting of {\textbf{SL}}}\label{section:negpos}
In this last section we use the tools from Section \ref{section:tools} to prove stability of the totally geodesic background $\varphi_{\text{I}}\circ\varphi_{\text{S}}$ in the case that the target has positive curvature, i.e. \eqref{waveu1SFver3} and \eqref{KGuiSFver3}.  With the notations introduced in the previous section,  we reduce our attention to 
\begin{equation} \label{possystem}
\begin{aligned}
\Box_\M u^1 &= 2 \langle\vec{u}, \p_{x^1}\vec{u}  \hspace{.05cm} \rangle + (\vec{u}\hspace{.05cm})^3 +  \vec{u} \cdot \M(\D \bm u, \D \bm u) + (\vec u\hspace{.05cm})^2 \cdot \p_{x^1} \bm u, \\
\Box_\M u^i - u^i & = -2u^i \p_{x^1} u^1 + (\vec{u}\hspace{.05cm})^3 + \vec{u}\cdot \M(\D \bm u, \D \bm u) + (\vec u\hspace{.05cm})^2 \cdot \p_{x^1} \bm u, \quad i = 2,\dots,n
\end{aligned}
\end{equation}
With $\p_{x^1}$ replaced by $\p_t$ on the right hand side, the system above is the same with the negative curvature case \eqref{negsystem}. Employing
 \beq\label{re-p_x-L}
\p_{x^i}=\frac{1}{t}L^i -\frac{x^i}{t} \p_t,
\eeq 
 what we can prove is:
\begin{thm}\label{maintheorem2}
Under the same assumptions, the results of Theorem \ref{maintheorem1} also apply to the system \eqref{possystem}.
\end{thm}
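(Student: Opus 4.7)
The plan is to reduce Theorem \ref{maintheorem2} to the argument already developed for Theorem \ref{maintheorem1}. The only structural difference between \eqref{negsystem} and \eqref{possystem} is the replacement of $\partial_t$ by $\partial_{x^1}$ (along with an irrelevant sign flip) in the semilinear terms. Using the identity \eqref{re-p_x-L}, every instance of $\partial_{x^1}u$ is decomposed as
\[
\partial_{x^1} u \;=\; \frac{1}{t}L^1 u \;-\; \frac{x^1}{t}\partial_t u,
\]
where $x^1/t$ is bounded by $1$ and, by \eqref{derweights}, all iterated $L^\beta$-derivatives of both $x^1/t$ and $1/t = 1/w_\tau$ remain bounded (by a constant and a constant multiple of $1/w_\tau$ respectively).

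After this decomposition, each term on the right-hand side of \eqref{possystem} splits into a $\partial_t$-piece of the form $\tfrac{x^1}{t}\cdot[\text{term of }\eqref{negsystem}]$ and an $L^1$-piece of the form $\tfrac{1}{t}\cdot L^1[\cdots]$. The Leibniz expansion of the $\partial_t$-piece under $L^\alpha$ yields the same family of multilinear expressions handled in Propositions \ref{quadenergy} and \ref{cubicenergy}, up to bounded multiplicative corrections that commute harmlessly with all estimates. For the $L^1$-piece, the extra $w_\tau^{-1}$ weight compensates for the substitution of $L^1 u$ in place of $\partial_t u$: pointwise one has $|\tfrac{1}{t}L^1 u|\lesssim w_\tau^{-3/2}\fk{E}_2$, which is no worse than $|\partial_t u|\lesssim \tau^{-1} w_\tau^{-1/2}\fk{E}_2$ from Proposition \ref{prop:estimates}. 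In weighted $L^2$, $\|\tfrac{1}{t}L^1 u\|_{\Lebw_{-1}^2}\lesssim \tau^{-1/2}\fk E_0$ matches the size of $\|\partial_t u\|_{\Lebw_{-1}^2}$. Thus, substituting $\partial_t\mapsto \tfrac{1}{t}L^1$ in the proofs of Propositions \ref{quadenergy} and \ref{cubicenergy} never degrades any of the resulting bounds.

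The only structural feature of the negative case that is \emph{lost} is the exact cancellation at level $m=0$ of \eqref{eq:quadcancel}: since $\partial_{x^1}$ and $\partial_t$ are distinct, the two quadratic terms do not cancel identically. However, expanding via \eqref{re-p_x-L} shows that the $\partial_t$-parts still cancel exactly (both carry the bounded factor $-x^1/t$), leaving only the residual
\[
\frac{2}{t}\langle\vec u,L^1\vec u\rangle\,\partial_t u^1 \;-\; \frac{2}{t}\, L^1 u^1\, \langle\vec u,\partial_t\vec u\rangle,
\]
which carries an additional $w_\tau^{-1}$ factor. Controlling $|\vec u|$ pointwise by $w_\tau^{-3/2}\fk E_2$ and pairing the remaining two factors in $\Lebw_{-1}^2$ via H\"older gives the estimate $\lesssim \tau^{-3/2}\fk{E}_2\fk{E}_1^2$, which is integrable in $\tau$ and strictly better than the $m=1,2$ bounds already established. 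With this observation, the a priori estimates of Corollary \ref{cor-apri-est} hold verbatim, Proposition \ref{prop:bootstrap} closes identically, and the pointwise decay follows from Proposition \ref{prop:estimates}. The only real bookkeeping obstacle is the expansion of the $L^1$-piece at top order $|\alpha|=2$, which produces terms of the form $\tfrac{1}{t}L^1 L^\alpha[\cdots]$; but these are handled by the same interpolated Gagliardo--Nirenberg--Sobolev argument used to control the borderline term \eqref{borderline}, with the extra $1/t$ weight providing additional room.
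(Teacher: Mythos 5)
The proposal is correct and follows essentially the same route as the paper: it uses the decomposition $\partial_{x^1} = t^{-1}L^1 - (x^1/t)\partial_t$ of \eqref{re-p_x-L}, observes that the $\partial_t$-piece reproduces the negative-curvature nonlinearities up to a bounded factor while the $L^1$-piece carries an extra $t^{-1}$ weight that compensates for the gained boost, and closes the bootstrap via the pointwise and interpolated Sobolev estimates of Proposition \ref{prop:estimates}. Your additional explicit observation — that the $\partial_t$-parts of the $m=0$ quadratic terms still cancel exactly and only the $L^1$-residual survives, with the bound $\tau^{-3/2}\fk{E}_2\fk{E}_1^2$ — is accurate and aligns with the paper's claim \eqref{claim-Q} for $m=0$.
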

\begin{proof}
It suffices to obtain similar estimates as those in Propositions \ref{quadenergy} and  \ref{cubicenergy}, then the theorem \ref{maintheorem2} follows similarly from Corollary \ref{cor-apri-est} and Proposition \ref{prop:bootstrap}.
We first deal with the quadratic terms.  We decompose from \eqref{re-p_x-L}  the quadratic terms into two parts, i.e. $Q(m)=Q_1(m)+Q_2(m)$ with
\beqs
\begin{aligned}
Q_1(m)&\eqdef\int_{\Sigma_\tau} \left[ 2 L^\alpha \left< \vec u, t^{-1} L^1 \vec u \right> \p_t L^\alpha u^1- 2\left<L^\alpha (\vec u\,  t^{-1} L^1u^1), \p_t L^\alpha \vec u\right> \right] \dvol_{\Sigma_\tau};\\
Q_2(m)&\eqdef \int_{\Sigma_\tau} \left[ -2 L^\alpha \left< \vec u, t^{-1} x^1\p_t  \vec u \right> \p_t L^\alpha u^1+ 2\left<L^\alpha (\vec u\,  t^{-1} x^1 \p_t u^1), \p_t L^\alpha \vec u\right> \right] \dvol_{\St}
\end{aligned} 
\eeqs 
for an $m$-tuple $\alpha$ with entries in $\{1,2,3\}$.  The $Q_2$ term has the same structure with the quadratic nonlinearities for the negative curvature case presented in previous section, with the introduction of the factor $t^{-1}x^1 = w_\tau^{-1} x^1$.  In particular, the top order terms can be canceled. As we will see, the boosts $L^i$ acting on $t^{-1}x^1$ only contribute lower order terms because of \eqref{derweights}.

On the other hand, the top order of $Q_1(m)$ can not be cancelled but we can utilize the extra decay of $t^{-1}$. We claim the quadratic terms can be bounded as
\beq\label{claim-Q}
\begin{aligned}
|Q(m)|\lesssim \left\{\begin{array}{cc}
\tau^{-3/2} \left(  \fk E_2(\tau)\fk E_m(\tau)^2\right),  \qquad &\hbox{if $m=0,1$};\\
\tau^{-1}\fk E_1(\tau) \fk E_2(\tau)^2 + \tau^{-3/2}\fk E_2(\tau)^3, \qquad & \hbox{if $m=2$}.
\end{array}
\right.
\end{aligned}
\eeq
Besides the inequality $w_\tau^{-1}\lesssim \tau^{-1}$,  we shall  also use the identity $t=w_\tau$ on the surface $\Sigma_\tau$. As $Q_2$ can be dealt with in the same way as Proposition \eqref{quadenergy}, we only provide the proof for $Q_1(m)$. The case $m = 0$ is straightforward as we control $\vec u$ using the pointwise estimates of Proposition \ref{prop:estimates} and the rest of the vectors using H\"older's inequality. For the case $m=1$ we need to estimate
\beq\label{est-Q_1-1}
 \int_{\Sigma_\tau} \left(w_\tau^{-1}|L\vec u\h{1.5pt}|^2+ w_\tau^{-1}|\vec u||LL\bm u|+|L^\alpha (t^{-1})|  |\vec u \h{1.5pt}||L \bm u|\right) |\p_t L\bm  u| \dvol_\St.
\eeq
 Choosing the weights appropriately  and applying H\"older's inequality imply 
\beqs
\begin{aligned}
\int_{\Sigma_\tau} w_\tau^{-1} |L\vec u\h{1.5pt}|^2 |\p_t L \bm u| \dvol_{\Sigma_\tau} & \lesssim   \tau^{-3/2} \fk E_2(\tau) \int_{\Sigma_\tau } \, w_\tau^{1/2} |L\vec u\h{1.5pt}| w_\tau^{-1/2}|\p_t L \bm u|\dvol_\St\\
&\lesssim \tau^{-3/2} \fk E_2(\tau) \fk E_1(\tau)^2.
\end{aligned}
\eeqs 
Here we also bounded $L^\infty$-norm of $L\vec u$ through  Proposition \ref{prop:estimates}. With a use of the definition of $L^i$ we have
\beqs
L^i(t^{-1})= -\frac{x^i}{t^2}
\eeqs
which on the surface $\St$ admits an upper bound $w_\tau^{-1}$, and hence implies
\beqs
\begin{aligned}
 \int_{\Sigma_\tau} |L^\alpha (t^{-1})|  |\vec u\h{1.5pt}||L \bm u| |\p_t L\bm  u| \dvol_\St&\lesssim  \tau^{-3/2}\fk E_2(\tau) \int_\St w_\tau^{-1/2} |L\bm u| w_\tau^{-1/2}|\p_t L\bm u| \dvol_\St\\
& \lesssim  \tau^{-3/2} \fk E_2(\tau)\fk E_1(\tau)^2.
 \end{aligned}
\eeqs
Here we also applied the $L^\infty$-bound of $\vec u$ in Proposition \ref{prop:estimates}. In a similar way the second term in \eqref{est-Q_1-1} admits the same upper bound which furthermore implies  \eqref{claim-Q} for $m=1$.

It remains to establish \eqref{claim-Q} for $m=2$, in which case $Q_1(m)$ can be bounded by
\beqs
\int_\St w_\tau^{-1}\left( |\vec u\h{1.5pt}||LL L \bm u|+  |LL\vec u\h{1.5pt}||L \bm u|+|L\vec u\h{1.5pt}||LL \bm u| \right) |\p_t LL \bm u|\dvol_\St + \text{ l.o.t},
\eeqs
where  the lower order terms are those that show up when $L$ acts on $t^{-1}$ resulting
\beq\label{est-L-t}
|L^\alpha (t^{-1})|\lesssim t^{-1}
\eeq 
for any $m$-tuple $\alpha$. It suffices to bound the top order terms. Bounding $L^\infty$-norm of $\vec u$ with the aid of Proposition \ref{prop:estimates} implies the first term can be bounded by
\beqs
\begin{aligned}
\int_\St w_\tau^{-1} |\vec u\h{1.5pt}| |LL L \bm u| |\p_t LL \bm u|\dvol_\St &\lesssim \tau^{-3/2} \fk E_2(\tau) \int_\St w_\tau^{-1/2}|LLL \bm u| w_\tau^{-1/2}|\p_tLL \bm u | \dvol_\St\\
&\lesssim \tau^{-3/2}  \fk E_2(\tau)^3.
\end{aligned}
\eeqs
The last two terms  can be dealt with by using the interpolation Sobolev inequalities in Proposition \ref{prop:estimates}. In particular, the second term can be bounded as
\beqs
\begin{aligned}
\int_\St w_\tau^{-1} |LL\vec u\h{1.5pt}||L \bm u| |\p_t LL \bm u|\dvol_\St & \lesssim \tau^{-1} \int_\St |LL\vec u\h{1.5pt}|w_\tau^{1/3} |L\bm u|w_\tau^{1/6} |\p_t LL \bm u|w_\tau^{-1/2} \dvol_\St\\
&\lesssim \tau^{-1}\|\vec u\h{1.5pt}\|_{\mr \Sobw_1^{2,3}}\|\bm u\|_{\mr \Sobw_1^{1,6}} \|\p_t \bm u\|_{\mr \Sobw_{-1}^{2,2}}\\
&\lesssim \tau^{-1} \fk E_1(\tau)\fk E_2(\tau)^2.
\end{aligned}
\eeqs
In a similar manner, the third term admits upper bound
\beqs
\begin{aligned}
\int_\St w_\tau^{-1} |L\vec u\h{1.5pt}||L L\bm u| |\p_t LL \bm u|\dvol_\St & \lesssim \tau^{-1} \int_\St |L\vec u\h{1.5pt}|w_\tau^{1/3} |LL\bm u|w_\tau^{1/6} |\p_t LL \bm u|w_\tau^{-1/2} \dvol_\St\\
&\lesssim \tau^{-1} \fk E_1(\tau)\fk E_2(\tau)^2,
\end{aligned}
\eeqs
which competes the proof of Claim \eqref{claim-Q}. 

The cubic terms can be dealt with similarly. In particular,  the cubic terms in \eqref{possystem} by employing \eqref{re-p_x-L} can be decomposed into two parts, writing as $\mcC(m)=\mcC_1(m)+\mcC_2(m)$ with
\beqs
\begin{aligned}
\mcC_1(m)&=\int_{\St}  L^\alpha \left[  (\vec u\hspace{.05cm})^2 \cdot t^{-1}L^1 \bm u   \right] \cdot \p_t L^\alpha \bm u \dvol_\St;\\
\mcC_2(m)&=\int_{\St}  L^\alpha \left[  (\vec{u}\hspace{.05cm})^3 +  \vec{u} \cdot \M(\D \bm u, \D \bm u) + (\vec u\hspace{.05cm})^2 \cdot t^{-1}x^1 \p_t \bm u   \right] \cdot \p_t L^\alpha \bm u \dvol_\St.
\end{aligned}
\eeqs
for an $m$-tuple $\alpha$ with entries in $\{1,2,3\}$. Again the second term $\mcC_2(m)$ admit  similar structure of cubic terms for the negative case and hence has the same bound as in Proposition \ref{cubicenergy}.  Here we recall $L^i$ acting on $t^{-1}x^1$, or $w_\tau^{-1} x^1$ on $\St$,  only contribute lower order terms by \eqref{derweights}. The first item $\mcC_1$ can be dealt with by utilizing the extra decay of $t^{-1}$.  We claim
\beq\label{est-C_1}
|\mcC_1(m)| \lesssim \left\{\begin{aligned}
&\tau^{-3} \fk E_2(\tau)^2 \fk E_m(\tau)^2, \qquad \qquad\qquad\qquad   &\hbox{if $m=0,1$};\\
& \tau^{-2} \fk E_2(\tau)^3 \fk E_1(\tau) +\tau^{-3}\fk E_2(\tau)^4, &\hbox{if $m=2$}.
\end{aligned}\right.
\eeq
For $m=0$,   the estimate above is a direct result of $L^\infty$ bound of $\vec u$ in Proposition \ref{prop:estimates} and H\"older's inequality. For $m=1$, utilizing \eqref{est-L-t} and $t=w_\tau$ on $\St$ we need estimate
\beqs
 \int_\St w_\tau^{-1}\left(|L \vec u\h{0.05cm} ||\vec u\h{0.05cm}| |L \bm u| +|\vec u \h{0.05cm}|^2|LL \bm u|  + |\vec u\h{0.05cm}|^2 |L\bm u|\right)|\p_t L\bm u| \dvol_\St.
\eeqs
We bound the $L^\infty$-norm of $L\bm u$ and $\vec u$ as in Proposition \ref{prop:estimates}, apply $w_\tau^{-1}\leq \tau^{-1}$ and distribute the weight appropriately, arriving at an upper bound:
\beqs
 \tau^{-3}\fk E_2^2 \int_\St \left( w_\tau^{1/2}|L \vec u\h{0.05cm} | +w_\tau^{-1/2}|LL \bm u|  + w_\tau^{-1/2} |L\bm u|\right) w_\tau^{-1/2}|\p_t L\bm u| \dvol_\St.
\eeqs 
Applying H\"older's inequality  implies the estimate for $m=1$ in \eqref{est-C_1}. Lastly, for $m=2$ we need bound
\beqs
\begin{aligned}
 \int_\St w_\tau^{-1} \left(|LL \vec u\h{0.05cm}||\vec u\h{0.05cm}| |L\bm u|+ |L\vec u \h{0.05cm}|^2 |L \bm u|+|\vec u\h{.05cm}|^2 |LLL \bm u| \right)|\p_t LL\bm u|\dvol_\St+ \mathrm{l.o.t}.
\end{aligned}
\eeqs 
Utilizing $L^\infty$ bound of $\vec u$ and $L\bm u$ in Proposition \ref{prop:estimates}  and $w_\tau^{-1}\leq \tau^{-1}$ yields an upper bound
\beqs\begin{aligned}
\tau^{-2} \fk E_2^2 \int_\St  \left( w_\tau^{-1/2} |LL \bm u\h{0.05cm}|+ w_\tau^{1/2}|L\vec u \h{0.05cm}|+\tau^{-1} w_\tau^{-1/2}  |LLL \bm u| \right)w_\tau^{-1/2}|\p_t LL\bm u|\dvol_\St+ \mathrm{l.o.t}.
\end{aligned}
\eeqs 
Then \eqref{est-C_1} for the case $m=2$ follows directly by H\"older's inequality.
\end{proof}

{\bf{Acknowledgements---}} The authors extend their gratitude to Willie Wong for helpful and illuminating discussions and for a close reading of a preliminary version of this paper. L. Abbrescia would like to thank Thomas Walpuski for some useful references. L Abbrescia was supported by an NSF Graduate Research Fellowship (DGE-1424871).

\bibliographystyle{amsalpha}

\bibliography{GeoWM}

\providecommand{\bysame}{\leavevmode\hbox to3em{\hrulefill}\thinspace}
\providecommand{\MR}{\relax\ifhmode\unskip\space\fi MR }
\providecommand{\MRhref}[2]{%
  \href{http://www.ams.org/mathscinet-getitem?mr=#1}{#2}
}
\providecommand{\href}[2]{#2}
\begin{thebibliography}{CBM96}

\bibitem[AW20a]{AbbWong2019}
Leonardo Abbrescia and Willie Wai~Yeung Wong, \emph{Global
  nearly-plane-symmetric solutions to the membrane equation}, Forum of
  Mathematics, Pi \textbf{8} (2020).

\bibitem[AW20b]{GNAWS}
Leonardo~Enrique Abbrescia and Willie Wai~Yeung Wong, \emph{Global versions of
  {G}agliardo-{N}irenberg-{S}obolev inequality and applications to wave and
  {K}lein-{G}ordon equations}, Transactions of the American Mathematical
  Society \textbf{374} (2020), no.~2, 773?802.

\bibitem[CBM96]{Bruhat}
Yvonne Choquet-Bruhat and Vincent Moncrief, \emph{Existence theorem for
  solutions of {E}instein's equations with {$1$} parameter spacelike isometry
  groups}, Quantization, nonlinear partial differential equations, and operator
  algebra ({C}ambridge, {MA}, 1994), Proc. Sympos. Pure Math., vol.~59, Amer.
  Math. Soc., Providence, RI, 1996, pp.~67--80. \MR{1392984}

\bibitem[DM20]{duan2020global}
Senhao Duan and Yue Ma, \emph{Global solutions of wave-klein-gordon system in
  two spatial dimensions with strong couplings in divergence form}, arXiv
  2010.08951.

\bibitem[DW21]{dong2021stability}
Shijie Dong and Zoe Wyatt, \emph{Stability of some two dimensional wave maps},
  arXiv: 2103.05318.

\bibitem[ES64]{eells1964harmonic}
James Eells and Joseph~H Sampson, \emph{Harmonic mappings of {R}iemannian
  manifolds}, American journal of mathematics \textbf{86} (1964), no.~1,
  109--160.

\bibitem[GML60]{gell1960axial}
Murray Gell-Mann and Maurice L{\'e}vy, \emph{The axial vector current in beta
  decay}, Il Nuovo Cimento (1955-1965) \textbf{16} (1960), no.~4, 705--726.

\bibitem[Gra04]{Gray}
Alfred Gray, \emph{Tubes}, second ed., Progress in Mathematics, vol. 221,
  Birkh\"{a}user Verlag, Basel, 2004, With a preface by Vicente Miquel.
  \MR{2024928}

\bibitem[Gri10]{Grigoryan}
Viktor Grigoryan, \emph{Stability of geodesic wave maps in dimensions {$d\geq
  3$}}, Int. Math. Res. Not. IMRN (2010), no.~23, 4544--4584. \MR{2739804}

\bibitem[IS19]{ifrim2019global}
Mihaela Ifrim and Annalaura Stingo, \emph{Almost global well-posedness for
  quasilinear strongly coupled wave-klein-gordon systems in two space
  dimensions}.

\bibitem[Kla84]{Klaine1984a}
Sergiu Klainerman, \emph{Long time behaviour of solutions to nonlinear wave
  equations}, Proceedings of the {I}nternational {C}ongress of
  {M}athematicians, {V}ol.\ 1, 2 ({W}arsaw, 1983) (Warsaw), PWN, 1984,
  pp.~1209--1215. \MR{804771}

\bibitem[Kla85]{Klaine1985a}
\bysame, \emph{Global existence of small amplitude solutions to nonlinear
  {K}lein-{G}ordon equations in four space-time dimensions}, Comm. Pure Appl.
  Math. \textbf{38} (1985), no.~5, 631--641. \MR{803252 (87e:35080)}

\bibitem[Kri07]{krieger2007global}
Joachim Krieger, \emph{Global regularity and singularity development for wave
  maps}, Surveys in differential geometry \textbf{12} (2007), no.~1, 167--202.

\bibitem[LM14]{LeFloch}
Philippe~G. LeFloch and Yue Ma, \emph{The hyperboloidal foliation method},
  Series in Applied and Computational Mathematics, vol.~2, World Scientific
  Publishing Co. Pte. Ltd., Hackensack, NJ, 2014. \MR{3362362}

\bibitem[Ma19]{WKG2D}
Yue Ma, \emph{Global solutions of nonlinear wave-{K}lein-{G}ordon system in two
  spatial dimensions: weak coupling case}, preprint arXiv:1907.03516 (2019).

\bibitem[Ma20]{ma2020global}
Yue Ma, \emph{Global solutions of nonlinear wave-klein-gordon system in two
  spatial dimensions: A prototype of strong coupling case}, arXiv 2008.10023.

\bibitem[O'N83]{Oneill}
Barrett O'Neill, \emph{Semi-riemannian geometry with applications to
  relativity, 103, volume 103 (pure and applied mathematics)}, Academic Press,
  1983.

\bibitem[Sha88]{shatah}
Jalal Shatah, \emph{Weak solutions and development of singularities of the
  ${S}{U}(2)$ $\sigma$-model}, Communications on Pure and Applied Mathematics
  \textbf{41} (1988), no.~4, 459--469.

\bibitem[Sid89]{Sideris}
Thomas~C. Sideris, \emph{Global existence of harmonic maps in {M}inkowski
  space}, Comm. Pure Appl. Math. \textbf{42} (1989), no.~1, 1--13. \MR{973742
  (89k:58069)}

\bibitem[Sog08]{sogge}
Christopher~Donald Sogge, \emph{Lectures on {N}on-{L}inear {W}ave {E}quations,
  {S}econd {E}dition}, vol.~2, International Press, 2008.

\bibitem[SS98]{WMBook}
Jalal Shatah and Michael Struwe, \emph{Geometric wave equations}, Courant
  Lecture Notes in Mathematics, vol.~2, New York University, Courant Institute
  of Mathematical Sciences, New York; American Mathematical Society,
  Providence, RI, 1998. \MR{1674843}

\bibitem[Vil70]{vilms1970totally}
Jaak Vilms, \emph{Totally geodesic maps}, Journal of differential geometry
  \textbf{4} (1970), no.~1, 73--79.

\bibitem[Won17]{Wong2017}
Willie Wai~Yeung Wong, \emph{Small data global existence and decay for two
  dimensional wave maps}, Submitted (2017), arXiv: 1712.07684.

\end{thebibliography}

\end{document}